\crefname{hypothesis}{Hypothesis}{Hypotheses}
\newcommand{\TheTitle}{A Root-Node Based Algebraic Multigrid Method}
\newcommand{\TheAuthors}{T. A. Manteuffel, L. N. Olson, J. B. Schroder, B. S. Southworth}
\headers{\TheTitle}{\TheAuthors}
\title{{\TheTitle}\thanks{Submitted to the editors 6/30/2016. 
  This work performed under the auspices of the U.S. Department of Energy
  by Lawrence Livermore National Laboratory under Contract DE-AC52--07NA27344,
  LLNL-JRNL-695797\@. This research was conducted with Government support under and awarded by DoD, Air Force Office of Scientific Research, National Defense Science and Engineering Graduate (NDSEG) Fellowship, 32 CFR 168a.
  Portions of this work were sponsored by the Air Force Office of Scientific Research under
  grant FA9550--12--1--0478. This work was  performed under the auspices of the U.S. Department of Energy under
  grant numbers (SC) DE-FC02--03ER25574 and (NNSA) DE-NA0002376 and
  Lawrence Livermore National Laboratory under contract B600360.
  }}
\author{  Thomas~A.~Manteuffel
  \thanks{Department of Applied Mathematics,
          University of Colorado at Boulder
          (\email{tmanteuff@colorado.edu}).}
  \and
  Luke~N.~Olson
  \thanks{Department of Computer Science,
          University of Illinois at Urbana-Champaign, Urbana, IL 61801
          (\email{lukeo@illinois.edu},
           \url{http://lukeo.cs.illinois.edu}).}
  \and
  Jacob~B.~Schroder
  \thanks{Center for Applied Scientific Computing,
          Lawrence Livermore National Laboratory
          (\email{schroder2@llnl.gov},
           \url{http://people.llnl.gov/schroder2}).}
  \and
  Ben~S.~Southworth
  \thanks{Department of Applied Mathematics,
          University of Colorado at Boulder
          (\email{ben.southworth@colorado.edu}).}
}
\ifpdf\hypersetup{  pdftitle={\TheTitle},
  pdfauthor={\TheAuthors}
}
\newcommand{\saamg}{SA~AMG\xspace}
\newcommand{\cfamg}{CF~AMG\xspace}
\newcommand{\rnamg}{RN~AMG\xspace}
\DeclareMathOperator*{\argmin}{argmin}
\DeclareMathOperator*{\sspan}{ran}
		\newcommand{\code}[1]{{\textnormal{\texttt{#1}}}}
\let\oldnl\nl\newcommand{\nonl}{\renewcommand{\nl}{\let\nl\oldnl}}\makeatother
\begin{document}
\maketitle

\begin{abstract}

This paper provides a unified and detailed presentation of root-node style
algebraic multigrid (AMG).  Algebraic multigrid is a popular and
effective iterative method for solving large, sparse linear systems that arise from discretizing
partial differential equations.  However, while AMG is designed for symmetric
positive definite matrices (SPD), certain SPD problems, such as anisotropic diffusion,
are still not adequately addressed by existing methods. Non-SPD problems pose
an even greater challenge, and in practice AMG is often not considered as a solver
for such problems.

The focus of this paper is on so-called root-node
AMG, which can be viewed as a combination of classical and aggregation-based multigrid.
An algorithm for root-node is outlined and a filtering strategy is developed,
which is able to control the cost of using root-node AMG, particularly on
difficult problems. New theoretical motivation is provided for root-node and
energy-minimization as applied to symmetric as well non-symmetric systems.
Numerical results are then presented demonstrating the robust ability of
root-node to solve non-symmetric problems, systems-based problems,
and difficult SPD problems, including strongly anisotropic diffusion,
convection-diffusion, and upwind steady-state transport, in a
scalable manner. New, detailed estimates of the computational cost of
the setup and solve phase are given for each example, providing
additional support for root-node AMG over alternative methods.
\end{abstract}

\begin{keywords}
  multigrid, algebraic multigrid, root-node, energy minimization, interpolation smoothing, anisotropic diffusion
\end{keywords}

\begin{AMS} 65F10, 65M22, 65M55 \end{AMS}

\section{Introduction}\label{sec:introduction}

Algebraic multigrid (AMG) methods such as classical AMG (CF AMG\footnote{Classical AMG or so-called Ruge-St\"uben AMG use a splitting of the degrees-of-freedom (DOFs) into coarse \textit{C-points} and fine \textit{F-points}, leading to the abbreviation CF.})~\cite{BrMcRu1984, RuStu1987} and
smoothed aggregation (SA AMG)~\cite{VaMaBr1996} are efficient solution techniques
for large, sparse linear systems. Algebraic multigrid was developed
specifically for symmetric positive-definite (SPD) systems that arise from
the discretization of elliptic partial differential equations (PDEs),
and software packages such as BoomerAMG~\cite{BoomerAMG}
in the hypre library~\cite{hypre}
demonstrate its parallel scalability to hundreds of thousands of cores~\cite{Baker2012}.

AMG targets solving a sparse linear system (typically SPD)
\begin{equation}\label{eqn:linear-system}
 A \mathbf{x} = \mathbf{b}
\end{equation}
with $O(n)$ work, where $\mathbf{x},\, \mathbf{b} \in \mathbb{R}^n$ and $A \in \mathbb{R}^{n\times
n}$.  This optimality is achieved through two complementary parts of a multigrid
method, relaxation and coarse-grid correction, which together
uniformly damp error of all frequencies (see Section~\ref{sec:background}).

However, there are certain SPD systems and many non-symmetric systems
for which AMG continues to struggle.
Problems with strongly anisotropic components, and problems arising in particle
transport, advective flow calculations, and strongly varying material
properties, among others, challenge the standard approaches to AMG, thus highlighting the need
for more robust methods. There have been a number of efforts in recent years to improve the convergence
and scope of applicability of AMG\@.  Adaptive methods focus on improving the multigrid hierarchy through
trial cycles in the setup phase~\cite{Brezina:2006bt,DAmbra:2013iwa,BrFaMaMaMcRu2005,BrMaMcRuSa2010}. Other methods
focus on modified or improved strength measures when forming coarse
grids~\cite{OlScTu2009,DAmbra:2013iwa,BrBrKaLi2015,Notay:2010um,Brandt:2011tu,Brannick:2012bl,BrBrMaMaMcRu2006,Brandt:2000vn,Brannick:2010hz,Livne:2004vt}.
Furthermore, generalizing interpolation through energy minimization~\cite{OlScTu2011} and other
methods~\cite{Gee:2009dy,Chan:2000tl,DeFaNoYa_2008,Wiesner:2014cy}
is used to improve the accuracy of interpolation between grid levels.
Nevertheless, many problems remain difficult for AMG to solve, while many `robust'
AMG methods suffer from high computational cost.

An AMG solver consists of a hierarchy of matrices, $\{A_\ell\}$, with the initial matrix
on level $\ell=0$, $A_0 := A$, and progressively smaller matrices on levels $\ell=1,2,\dots,L$.
Interpolation and restriction operators, also known as transfer operators, transfer
vectors between different levels of the hierarchy. For a given matrix, $A_\ell$, the next `coarser' matrix in the hierarchy (level $\ell+1$)
is generally developed in one of two different ways: using a CF-splitting
of points (\cfamg) or using an aggregation of points (\saamg).
A CF-splitting splits the set of all DOFs of matrix $A_\ell$
into a coarse set of C-points and a fine set of F-points, with C-points corresponding to DOFs on the coarse grid.  Transfer operators are then
defined using the CF-splitting, where values at C-points are restricted and interpolated
by injection and values at F-points use a linear combination of connected
neighboring points. In \saamg, a measure of the strength-of-connection (SOC) between nodes
is used to form `aggregates', which are disjoint sets of strongly connected nodes, where each
aggregate represents one node on the coarse grid, and transfer operators are
formed based on aggregates. A more detailed look at
SA AMG and CF AMG is given in Section~\ref{sec:background}.

Root-node AMG (\rnamg) uses a hybrid approach, wherein SA-type
strength-of-connection and aggregation routines are used to form aggregates. In
each aggregate, one node is chosen to be the `root-node', which corresponds to a
C-point, and other nodes are designated as F-points. Transfer operators are
then formed based on this CF-splitting together with aggregation.

Root-node AMG was initially identified in~\cite{OlScTu2011} as a small part of a
general, energy-minimizing framework to form interpolation operators in AMG\@.
The work in~\cite{Sc2012} implemented the RN
approach and demonstrated its potential as an effective and scalable solver for
strongly anisotropic, non-grid-aligned diffusion operators~---~problems which have
proven difficult for other multigrid methods. However, certain
problems required a large computational cost, especially in the
setup of the method.

This paper provides, for the first time, a unified and detailed presentation of \rnamg\
and how it combines many of the benefits of \cfamg\ and \saamg.  Root-node AMG
allows for classical point-wise decisions in the
setup, to help control complexity\footnote{``Complexity'' or ``cost'' refer to the overall computational cost of the method in terms of floating point operations.}
and provide theoretical motivation, as well as aggregation-style
construction, to facilitate the design of a multigrid solver based on the
spectral behavior of the problem.
New theoretical motivation is given for the pairing
of \rnamg\ with energy minimization, in both the symmetric and non-symmetric cases,
proving the equivalence of the energy-minimization process to
minimizing the difference with an optimal form of interpolation.
Moreover, a new interpolation filtering strategy is developed to limit the complexity
of the method, which proves critical for problems that require
large sparsity patterns in transfer operators.
Last, a numerical survey is presented to highlight the robustness and flexibility
of \rnamg\ in comparison to \cfamg\ and \saamg, including scalable convergence
for strongly anisotropic diffusion problems and a discontinuous, upwind discretization
of the steady-state transport equation. For each test problem, a detailed measure of computational cost or
complexity is provided, a novel addition to AMG literature, which
provides a complete picture of a method when coupled with convergence
factors.

In Section~\ref{sec:background}, background information on algebraic multigrid
methods is discussed, including current limitations and the basic motivation for
a RN-type algorithm.
The \rnamg\ method and algorithm are presented in
Section~\ref{sec:root-node}, along with a discussion of computational complexity
and a filtering process proposed to address cases of high complexity.
Section~\ref{sec:theory} provides theoretical
motivation for \rnamg. Numerical results are
provided in Section~\ref{sec:numerical}, and conclusions and future work
discussed in Section~\ref{sec:conclusions}.

\section{Background}\label{sec:background}

Multigrid methods, such as \saamg\ and \cfamg, involve two phases: (i) the setup
phase, where a multilevel solver hierarchy is constructed, and (ii) the solve
phase, where the constructed solver hierarchy is applied to solve the linear
system~\eqref{eqn:linear-system} to a desired tolerance.  Smoothed-aggregation AMG and \cfamg\ are distinguished by the setup
phase; that is, once a hierarchy is constructed, both methods execute the solve
phase in the same fashion.

In the following, a multigrid hierarchy consists of a set of matrices, starting with an
initial, fine matrix $A_0 \equiv A \in \mathbb{R}^{n\times n}$. Matrices for additional levels
in the hierarchy, $A_\ell \in \mathbb{R}^{n_\ell \times n_\ell}$, are then constructed based on
interpolation and restriction operators, $P_\ell\,:\,\mathbb{R}^{n_{\ell+1}}\rightarrow\mathbb{R}^{n_\ell}$
and $R_\ell\,:\,\mathbb{R}^{n_{\ell}}\rightarrow\mathbb{R}^{n_\ell+1}$, respectively,  via
$A_{\ell+1} = R_\ell A_\ell P_\ell$, where $n_{\ell+1} < n_\ell$. In the case of an SPD
matrix, restriction $R = P^T$.

The AMG solve phase iterates using two complementary parts: relaxation~---~e.g.,
weighted Jacobi~---~to reduce the high-energy error that is associated with large
eigenvalues in the operator, and coarse-grid correction, which targets
algebraically smooth error, $A_0 \mathbf{e} \approx 0$, associated with the small eigenvalues.
A two-grid solve proceeds as follows.  Pre-relaxation on $A_0 \mathbf{x}_0 = \mathbf{b}_0$ is applied,
and the resulting residual is then restricted to the coarse grid, $\mathbf{b}_1 = R_0 (\mathbf{b}_0 - A_0 \mathbf{x}_0)$,
which serves as the right-hand side for the coarse-grid equation,  $A_1 \mathbf{x}_1 = \mathbf{b}_1$,
where $A_1 = R_0 A_0 P_0$. The solution, $\mathbf{x}_1$, provides a coarse-grid
error correction that is interpolated back to the fine grid, $\mathbf{x}_0 \leftarrow \mathbf{x}_0 + P_0 \mathbf{x}_1$.
Last, post-relaxation is applied to the updated $\mathbf{x}_0$. Together, these three steps form
a two-level multigrid V{($\nu_{\textnormal{pre}}$, $\nu_{\textnormal{post}}$)}-cycle,
where $\nu_{\textnormal{pre}}$ refers to the number of pre-relaxations and
$\nu_{\textnormal{post}}$ to the number of post-relaxations. A full solve then
consists of using successive V-cycles to iterate on a vector until the relative residual norm,
$\|\mathbf{r}_0\|/\|\mathbf{b}_0\|$ is less than some tolerance~---~e.g., $10^{-8}$.

The effectiveness of this complementary process is explained by considering the corresponding two-grid error propagation
operator.  Let $\mathbf{e}^{(0)}$ be the initial error in approximating the solution to~\eqref{eqn:linear-system} and let $\mathbf{e}^{(0)} \leftarrow G \mathbf{e}^{(0)}$ represent
the error propagator for the relaxation method~---~e.g.,
$G=I-\omega D^{-1} A$ in the case of weighted Jacobi. Then (dropping
subscripts), the error after a two-grid cycle is given by
\begin{equation}\label{eq:twogrid}
  \mathbf{e}^{(1)} \leftarrow G^{\nu_2} \left(I - P {(R A P)}^{-1} R A \right) G^{\nu_1} \mathbf{e}^{(0)},
\end{equation}
where $P {(R A P)}^{-1} R A$ is a projection onto $\mathcal{R}(P)$. In the case of SPD $A$,
$R = P^T$, and this is an $A$-orthogonal projection. In either case, if $Ge^{(0)} \in \mathcal{R}(P)$,
then the iteration is exact. In other words, if interpolation is complementary and accurate for modes
not effectively reduced by relaxation, then error reduction with~\eqref{eq:twogrid} will be large.

Algebraic multigrid methods attempt to automatically determine interpolation and coarse-grid
operators ($P_\ell$ and $A_\ell$) that yield optimal error reduction
with~\eqref{eq:twogrid}.  The two standard approaches are \saamg\ and \cfamg, which
are outlined in Sections~\ref{sec:background:sa}~and~\ref{sec:background:cf}, respectively.
Broadly, \saamg\ defines a coarse problem through
an \textit{aggregation} of nodes (see Figure~\ref{fig:graph-aggregates}), while
\cfamg\ defines a coarse problem through a splitting of the DOFs
into coarse C-points and fine F-points (see
Figure~\ref{fig:graph-splitting}).  Each offer advantages as noted in the
following descriptions.
\begin{figure}
  \begin{center}
  \begin{subfigure}[b]{0.48\textwidth}
    \includegraphics[width=\textwidth]{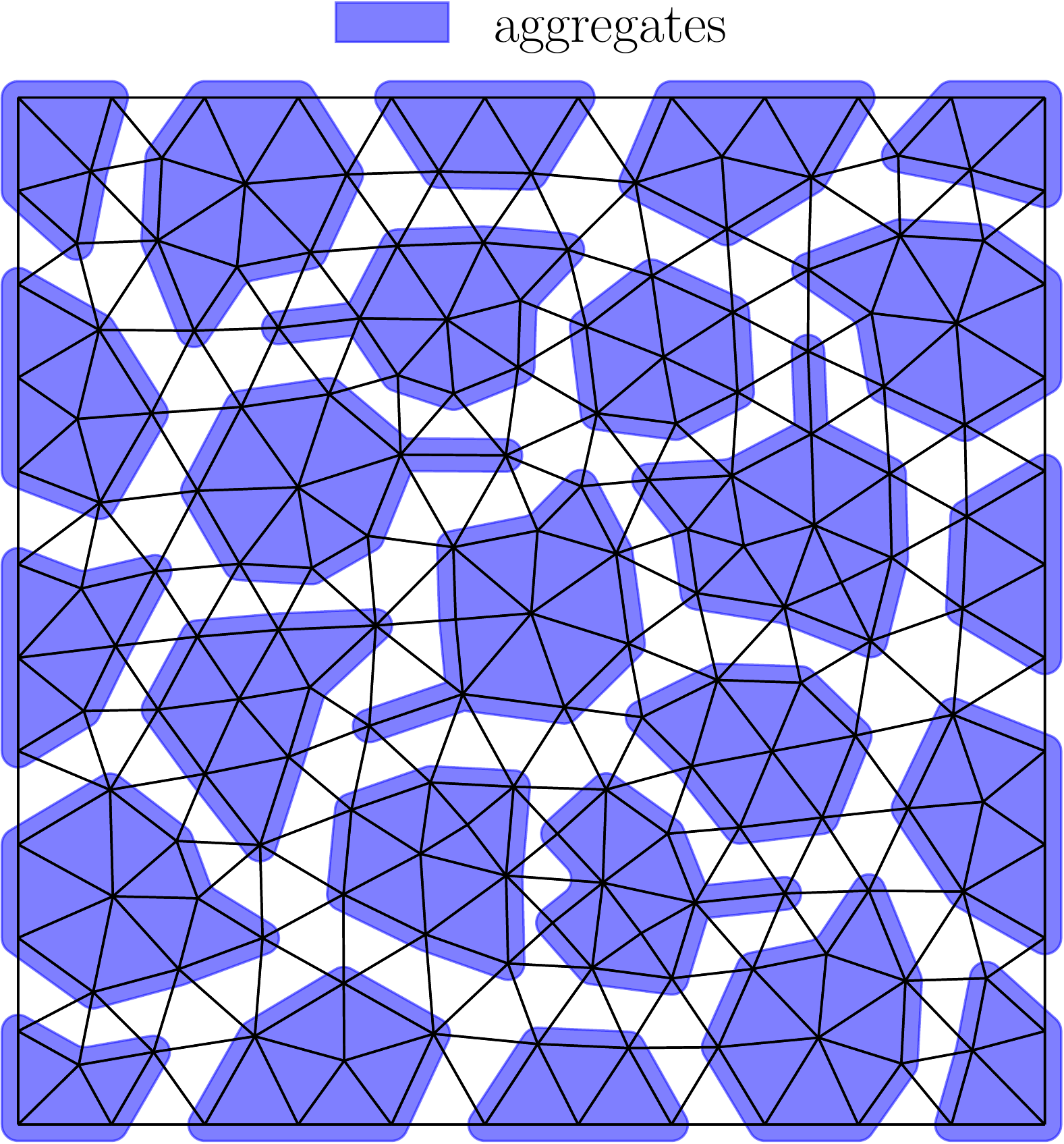}
    \caption{Aggregation}\label{fig:graph-aggregates}
  \end{subfigure}
  \hfill
  \begin{subfigure}[b]{0.48\textwidth}
    \includegraphics[width=\textwidth]{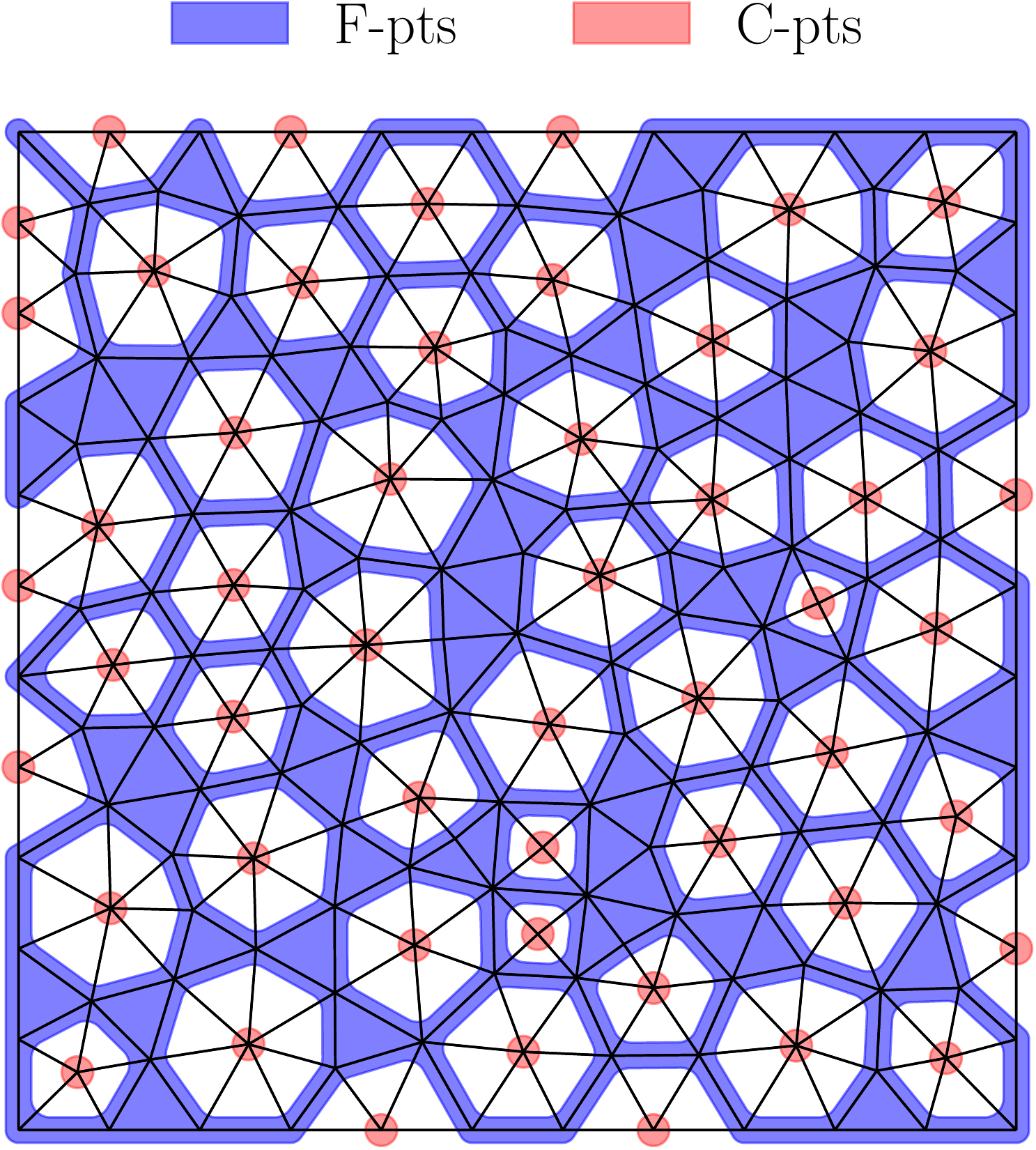}
    \caption{$C/F$ splitting}\label{fig:graph-splitting}
  \end{subfigure}
  \caption{Example \saamg\ and \cfamg\ coarsening for a linear finite element approximation to a Laplace operator.  The fine level problem has 191 DOFs. In this example, aggregation yields 25 coarse DOFs (aggregates), while a CF-splitting yields 51 coarse DOFs (C-points).}\label{fig:graph-example}
  \end{center}
\end{figure}

\subsection{AMG based on smoothed aggregation}\label{sec:background:sa}

The effectiveness of \saamg\ relies on \textit{a priori} knowledge of
algebraically smooth error in the form of candidate vectors, $B$.
These vectors generally represent the lowest energy modes of the governing PDE with
no boundary conditions~---~e.g., the constant for diffusion and rigid-body-modes
for elasticity~\cite{VaMaBr1996}.  Using $B$ (possibly determined \textit{a priori}), the
setup phase first uses a strength measure on the connectivity of nodes to define a SOC matrix, $S$, which
is used to
identify so-called aggregates (see Figure~\ref{fig:graph-aggregates}).
The goal of the strength measure is to ensure that algebraically smooth error
at each DOF in an aggregate strongly correlates with algebraically smooth
error at other DOFs in that aggregate.  This holds for
model problems, where algebraically smooth error
over each aggregate is well represented by the
restriction of the
candidate vectors
to the aggregate.  Consequently, this \emph{injection} of the candidate vectors
over each aggregate is what leads to the initial representation of interpolation,
termed the tentative interpolation operator $T$, with candidate vectors exactly
in the range of $T$. Each aggregate
corresponds to one block in the block-diagonal operator $T$, with one column
for each candidate and one row for each point in the aggregate.

As an example, consider the 1D-Laplace operator on an eight-node mesh, using
standard finite differences, with candidate vectors $B=[{\bf 1}, {\bf x}]$.
This yields three aggregates,
$\mathcal{A}_i, i=0,1,2$, as shown in Figure~\ref{fig:agg1d-setup}.  The
aggregation pattern matrix, $C$, coupled with the candidate vectors, $B$, yield the following
\begin{equation}\label{eq:exampleinterp}
  C = \left[\begin{array}{ccc}
      * &  & \\
      * &  & \\
        & * &  \\
        & * &  \\
        & * &  \\
        &   & *\\
        &   & *\\
        &   & *\\
    \end{array}\right],
  \hspace{1ex}  B = \left[\begin{array}{cc}
      1 & 1/9\\
      1 & 2/9\\
      1 & 3/9\\
      1 & 4/9\\
      1 & 5/9\\
      1 & 6/9\\
      1 & 7/9\\
      1 & 8/9\\
  \end{array}\right]
  \hspace{1ex}\rightarrow\hspace{1ex}  T = \left[
        \begin{array}{r r r r r r}
          1             & -1 &              &    &              & \\
          1             & 1  &              &    &              & \\
                        &    & 1 & -1 &              & \\
                        &    & 1            & 0  &              & \\
                        &    & 1            & 1  &              & \\
                        &    &              &    & 1 & -1\\
                        &    &              &    & 1            & 0\\
                        &    &              &    & 1            & 1\\
        \end{array}
      \right]
      D,
\end{equation}
where $D=\textrm{diag}{([2, 2, 3, 2, 3, 2])}^{-1/2}$ is a diagonal matrix to
normalize each column in the $l^2$-norm.  Each block of $T$ in~\eqref{eq:exampleinterp} is an orthogonal basis for the restriction of $B$ to each aggregate.

In the case of a single candidate vector, $B=[{\bf 1}]$, $T$ consists of columns
$0$, $2$, and $4$ in~\eqref{eq:exampleinterp}.  These columns are plotted
in Figure~\ref{fig:agg1d-1-CSR-SA} (dashed lines).  It is important to note that
each column is nonzero only on its respective aggregate.  To
improve the accuracy of interpolation for algebraically smooth modes~---~i.e.,
to make $\mathcal{R}(P)$ more complementary to relaxation~---~the
columns of the tentative interpolation operator are smoothed~---~e.g., with
weighted Jacobi~---~to form $P$ (Figure~\ref{fig:agg1d-1-CSR-SA}, solid lines).  As the range of $P$ becomes richer,
so does the nonzero footprint in the operator. Indeed, nonzero elements
of the middle column of $P$ in Figure~\ref{fig:agg1d-1-CSR-SA} (solid green)
now overlap into the neighboring aggregates.

\begin{figure}[hb!]
  \centering
  \begin{subfigure}[b]{0.48\textwidth}
    \includegraphics[width=\textwidth]{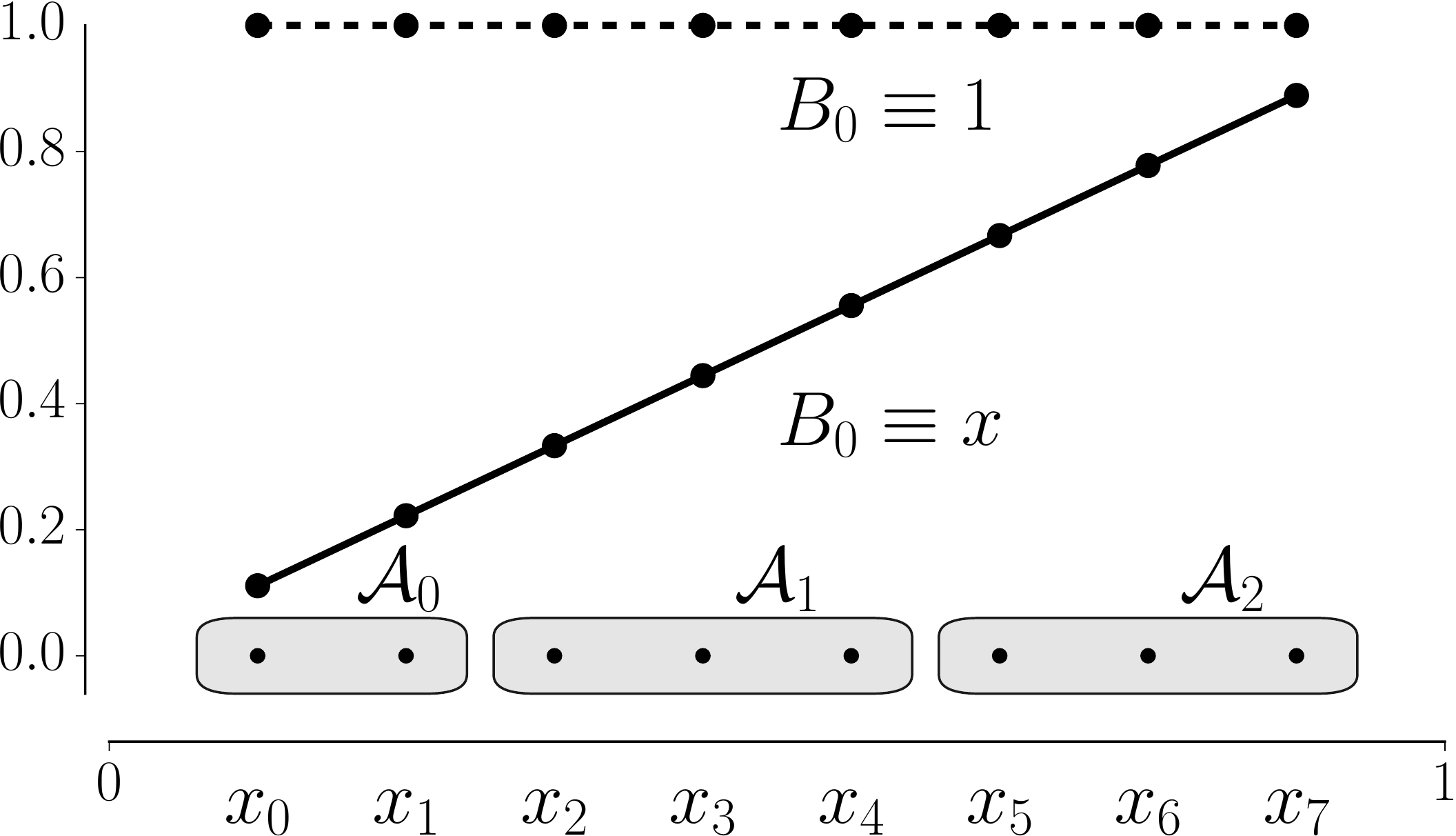}
    \caption{Three aggregates and two candidate vectors.}\label{fig:agg1d-setup}
  \end{subfigure}
  \hfill
  \begin{subfigure}[b]{0.48\textwidth}
    \includegraphics[width=\textwidth]{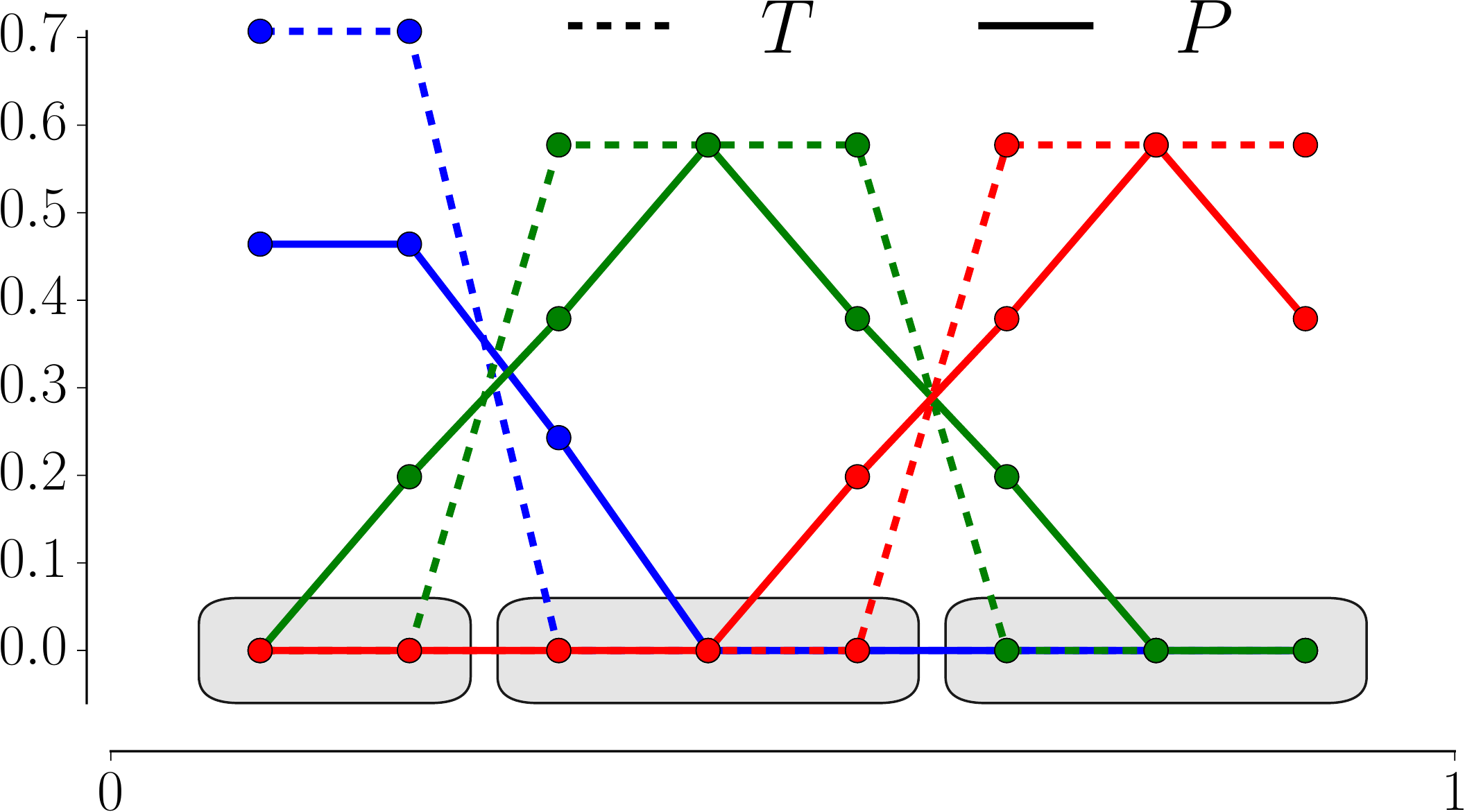}
    \caption{Columns of interpolation using a single candidate with SA.}\label{fig:agg1d-1-CSR-SA}
  \end{subfigure}
  \caption{1-D Laplace example using \saamg.}\label{fig:agg1d-SA}
\end{figure}

Given a set of candidate vectors $B$ on the current grid that are exactly represented in the range of $T$,
coarse-grid candidate vectors are constructed as the pre-image of $B$ under $T$. The
motivation is that the pre-image of low energy vectors for $A_{\ell}$ should be low-energy
vectors for $A_{\ell+1}$.
Let the orthogonal projection onto $\mathcal{R}(T)$ be given by $\pi_T = T (T^T T)^{-1} T^T$. Since the columns of $T$ are orthonormal, $\pi_T = T T^T$, and requiring $\pi_T B = B$, that
is $B\in\mathcal{R}(T)$, results in a coarse-grid pre-image of $B$ under $T$ given by
$B_c = T^T B$. Since $B$ is assumed to be a low energy mode, forming $P$ by applying
smoothing iterations to $T$ to improve multilevel convergence~\cite{Van:2001bw} keeps
$B$ close to the range of $P$.

A general \saamg\ setup algorithm is given in Algorithm~\ref{alg:SAAMG}.

\begin{algorithm2e}[!ht]
  \caption{\texttt{SA\_setup{()}}}\label{alg:SAAMG}
   \DontPrintSemicolon   \KwIn{      \begin{tabular}[t]{l l}
      $A_0$:               & fine-grid operator\\
      $B_0$:               & fine-grid candidate vectors\\
      \code{max\_size}:    & threshold for max size of coarsest problem \\
      \end{tabular}
   }
   \nonl\;
   \KwOut{      \begin{tabular}[t]{l}
      $A_{1}, \dots, A_{L}$,\\
      $P_{0}, \dots, P_{L-1}$
      \end{tabular}
   }
   \nonl\;
   $\ell = 0$\;
   \While{$\code{size}(A_\ell) > \code{max\_size}$}{      $S_{\ell}$           = \code{strength}{($A_{\ell}$)}\tcc*[r]{Strength-of-connection}
      $\mathcal{A}_{\ell}$ = \code{aggregate}{($S_{\ell}$)}\tcc*[r]{Aggregation}
      $T_{\ell}$, $B_{\ell+1}$ = \code{inject}{($\mathcal{A}_{\ell}$, $B_{\ell}$)}\tcc*[r]{Form tentative interpolation and coarse candidates}
      $P_{\ell}$          = \code{smooth}{($A_{\ell}$, $T_{\ell}$)}   \tcc*[r]{Smooth $T_{\ell}$}
      $A_{\ell+1} = P_{\ell}^T A_{\ell} P_{\ell}$\tcc*[r]{Coarse-grid operator}
      $\ell = \ell + 1$\;
    }
\end{algorithm2e}

\subsection{AMG based on coarse-fine splittings}\label{sec:background:cf}

In contrast to \saamg, CF AMG\ builds a multilevel hierarchy through a
CF-splitting. On each level $\ell=0,1,\dots,L$, the index set of DOFs, $\Omega^{\ell} = \{0,\dots,n_{\ell}\}$, is
split into $\Omega^{\ell} = \mathcal{C}_{\ell} \cup \mathcal{F}_{\ell}$, where $\mathcal{C}_{\ell} \cap \mathcal{F}_{\ell} = \emptyset$.  The
set $\mathcal{C}_\ell$ defines the coarse-level DOFs so that $n_{\ell+1} = |\mathcal{C}_\ell|$.
Similar to \saamg, a strength measure, $S$, is used to determine the splitting
so that algebraically smooth error at F-points are well-approximated
by evaluating neighboring C-points.  Then interpolation,
$P\,:\,\mathbb{R}^{n_{\ell+1}}\rightarrow\mathbb{R}^{n_{\ell}}$, is formed as
\begin{equation}\label{eq:cfinterp}
  P =
  \begin{array}{c@{}c}
    \left[
      \begin{array}{c}
        W \\
        I\\
      \end{array}
    \right]
  &
  \begin{array}{l}
    \} \,\,\textnormal{F-points}\\
    \} \,\,\textnormal{C-points}\\
  \end{array}
  \end{array},
\end{equation}
where $W\in\mathbb{R}^{|\mathcal{F}_\ell|}\times\mathbb{R}^{|\mathcal{C}_\ell|}$
is a sparse matrix, with entries chosen to approximate smooth error at
F-points (rows of $W$) as a linear combination of C-points (columns of $W$).

The form of interpolation in~\eqref{eq:cfinterp} highlights two attributes of
interpolation in \cfamg\ that distinguishes it from \saamg.  The first is that the source of
complexity and accuracy is clear: the number of nonzeros in $W$ controls both
the accuracy and cost of interpolation.  This is explored further in
Section~\ref{sec:cost_overview}.  Second, coarse values are \textit{injected} to the finer
grid through the identity in the bottom block.  This in turn
ensures linear independence of the columns of $P$, an important feature
not guaranteed in \saamg.

A general \cfamg\ setup algorithm is given in Algorithm~\ref{alg:cfamg}.
\begin{algorithm2e}[!ht]
  \caption{\texttt{CF\_setup{()}}}\label{alg:cfamg}
   \DontPrintSemicolon   \KwIn{      \begin{tabular}[t]{l l}
      $A_0$:               & fine-grid operator\\
      \code{max\_size}:    & threshold for max size of coarsest problem \\
      \end{tabular}
   }
   \nonl\;
   \KwOut{      \begin{tabular}[t]{l}
      $A_{1}, \dots, A_{L}$,\\
      $P_{0}, \dots, P_{L-1}$
      \end{tabular}
   }
   \nonl\;
   $\ell = 0$\;
   \While{$\code{size}(A_\ell) > \code{max\_size}$}{            $S_{\ell}$           = \code{strength}{($A_{\ell}$)}\tcc*[r]{Strength-of-connection}
      $\mathcal{C}_{\ell}$,
      $\mathcal{F}_{\ell}$ = \code{splitting}{($S_{\ell}$)}\tcc*[r]{$C/F$-splitting}
      $W$ = \code{weights}{($S_{\ell}$, $A_{\ell}$, $\mathcal{C}_{\ell}$, $\mathcal{F}_{\ell}$)}\tcc*[r]{Interpolation weights}\label{line:weights}
      $P_{\ell}$          = $\left[\begin{array}{c}W \\ I\end{array}\right]$\tcc*[r]{Form interpolation}
      $A_{\ell+1} = P_{\ell}^T A_{\ell} P_{\ell}$\tcc*[r]{Coarse-grid operator}
      $\ell\ = \ell\ + 1$\;
    }
\end{algorithm2e}

\subsection{Benefits and limitations}\label{sec:limitations}

In \cfamg, it is assumed that the constant
vector is representative of the near null space of the underlying problem,
as in the case of a Poisson problem.  As a result, convergence can degrade
if this is not an accurate assumption.
Moreover, the interpolation formulas for \cfamg\ are static and offer
no immediate ways to improve them for more difficult problems.

One benefit of \cfamg\ is that it provides a structure for controlling
sparsity through the determination of weights (see line~\ref{line:weights} in
Algorithm~\ref{alg:cfamg}). Each row of $W$ (see~\eqref{eq:cfinterp})
represents an interpolation formula for a given F-point from surrounding
C-points. As a result, this leads
to simple filtering strategies in $P$~\cite{DeFaNoYa_2008} that
eliminate small entries in each row. In \saamg, interpolation is based on
having algebraically smooth columns of $P$.  Consequently, existing methods to
explicitly filter entries in $P$ are limited, as removing entries can greatly reduce smoothness~\cite{Chan:2000tl}.
The structure of interpolation operators in \cfamg\ also allows for theoretical
results that are not feasible for arbitrary transfer operators as in \saamg~\cite{FaVa2004, Va2008, Notay:2014uc}, and which are used in Section~\ref{sec:theory}
to motivate \rnamg.

For problems in which a single global vector adequately represents the algebraically
smooth error, \saamg\ and \cfamg\ can each be effective.
One benefit of \saamg\ is that the method allows for multiple
candidate vectors to help define the range of interpolation in order
to improve the coarse-grid correction. However, the computational cost of iterating tends
to increase substantially when additional candidate vectors are included. Since each candidate
vector occupies a column in $P$ for each aggregate (see~\eqref{eq:exampleinterp}), additional
candidate vectors quickly increase the number of DOFs and nonzeros in coarse-level operators.

As an example, consider a two-level multigrid method for a 2D-Laplacian discretized with linear, quadrilateral
finite elements over a $50\times 50$ uniform grid.  In this case, $A_0
\in\mathbb{R}^{2500\times 2500}$, with 21904 nonzero elements. Using the symmetric
strength matrix~---~i.e. $S_{ij} =1$ if $A_{ij}/\sqrt{A_{ii}A_{jj}}>0.25$~\cite{VaMaBr1996}~---~along
with standard, greedy aggregation~\cite{VaMaBr1996} yields 289 aggregates.  This results in
$P_0\in \mathbb{R}^{2500\times 298}$ in the case of a single candidate vector, and
a coarse-grid operator $A_1\in\mathbb{R}^{289\times 289}$ with 2401 nonzeros. Using
two candidate vectors, $P\in \mathbb{R}^{2500\times
578}$, and the coarse-grid operator, $A_1\in\mathbb{R}^{578\times 578}$, has
9604 nonzeros.  In this two-grid example, there is approximately a 30\% increase in the
total number of matrix nonzeros in the hierarchy, which will correspond to a comparable
increase in the cost of each iteration\footnote{See~\cite{KeMaSc2015} for an example
where a scalar diffusion-like problem requires multiple candidate vectors.}. A key feature of the \rnamg\ method
introduced in Section~\ref{sec:root-node} is that the growth in complexity is mitigated when
incorporating multiple candidate vectors in the range of $P$.

Additionally, \saamg\ provides a simple way to improve interpolation operators
through the interpolation smoothing process.  While classical \saamg\ only uses one
weighted-Jacobi iteration to improve $P$, multiple iterations as well as
smoothing with a \textit{filtered} operator to further improve $P$ have been used~\cite{Chan:2000tl,MaBrVa1999}.
Nevertheless, because each traditional smoothing iteration expands the sparsity
pattern of $P$, this process is limited, a problem which is overcome in \rnamg\ through
\textit{a priori} fixed sparsity patterns of transfer operators.

\subsection{Computational cost}\label{sec:cost_overview}

The computational kernel in the multigrid setup and solve phases is a
sparse matrix-vector product (SpMV).  Thus, a representative measure of the cost
of an AMG solver is the number of floating point operations relative to one SpMV with
the initial matrix. This measure is referred to as a \textit{work unit} (WU), where one WU
is the cost of computing a SpMV on the finest level.
Both \cfamg\ and \saamg\ often yield minimal setup costs or setup complexity (SC),
but as more features are introduced~---~e.g., improved SOC methods and energy minimization~---~the SC may grow. In contrast to the fixed cost of setup, the solve cost
depends on the number of iterations or cycles taken, which in turn depends on the stopping residual tolerance.
Consequently, the \textit{cycle} complexity (CC), denoted $\chi_{\textnormal{CC}}$, is defined as the number of
WUs required for each multigrid cycle, and is used to measure the solve cost.
A similar measure is the \textit{operator} complexity (OC), denoted $\chi_{\textnormal{OC}}$, which models
the cost of a multigrid hierarchy as the ratio of the total number of nonzeros on all levels
to the number of nonzeros on the finest-level:
\begin{equation}\label{eq:oc}
  \chi_{\textnormal{OC}} =  \sum_\ell \frac{|A_{\ell}|}{|A_{0}|},
\end{equation}
where $|C|$ denotes the number of nonzeros in some sparse matrix $C$.
Note that this is equivalent to the total cost to perform one SpMV on each level of the hierarchy.
Using this, the CC is often considered to scale with OC\@.  For example, in the case of
a V{(2,2)} cycle, $\chi_{\textnormal{CC}} \approx 4\chi_{\textnormal{OC}}$. However, a more detailed model for CC
includes the residual computation and coarse-grid correction
steps.  While it is not typical to account for these parts of the solve phase,
they often contribute significantly to the CC, especially for the richer interpolation sparsity patterns examined later.
To this end, the CC for a V{($\nu_{\textnormal{pre}}$, $\nu_{\textnormal{post}}$)}-cycle is defined here as
\begin{equation}\label{eq:cc}
  \chi_{\textnormal{CC}} =  \sum_\ell \frac{    (\nu_{\textnormal{pre}} +\nu_{\textnormal{post}} + 1)|A_{\ell}| +
    |P_{\ell}| + |R_{\ell}|}{|A_{0}|},
\end{equation}
which reflects pre- and post-relaxation, a residual calculation, and one
interpolation and restriction per level (see solve phase discussion in
Section~\ref{sec:background}).

Detailed estimates of the complexity measures are often neglected in numerical results.
One contribution of this work is that precise estimates of the
SC, OC, and CC are provided for the numerical results presented in
Section~\ref{sec:numerical}. Coupled with the convergence factor,
this information is used to assess the effectiveness of the solver.
The SC estimates have been used to expose the expensive parts of the algorithm and
motivated the complexity reduction
techniques introduced in Section~\ref{sec:numerical:3DAni}.

\section{Root-node method}\label{sec:root-node}

The general algorithm for constructing a RN AMG\ hierarchy with $L+1$ levels,
using energy-minimizing interpolation smoothing, is given in
Algorithm~\ref{alg:rn_setup}.  The following subsections detail each
algorithmic step, comparing and contrasting with \saamg\ and \cfamg.
\begin{algorithm2e}[!ht]
   \DontPrintSemicolon   \KwIn{      \begin{tabular}[t]{l l}
      $A_0$:                 & fine-grid operator\\
      $B_0$:                 & fine-grid candidate vectors for $A$\\
      $\hat{B}_0$:                 & fine-grid candidate vectors for $A^T$ (if $A\neq A^T$)\\
      $d$:                      & interpolation sparsity pattern width\\
      \code{vector}:       & flag indicating vector-based problem\\
      \code{max\_size}: & threshold for max size of coarsest problem \\
      \code{prefilter}:     & Pre-filtering of interpolation sparsity pattern\\
      \code{postfilter}:    & Post-filtering of interpolation sparsity pattern
      \end{tabular}
   }
      \KwOut{      \begin{tabular}[t]{l}
      $A_{1}, \dots, A_{L}$,\\
      $P_{0}, \dots, P_{L-1}$,\\
      $R_{0}, \dots, R_{L-1}$
      \end{tabular}
   }
      $\ell = 0$\;
   \While{$\code{size}(A_\ell) > \code{max\_size}$}{            $S_{\ell}$        = \code{strength}{($A_{\ell}$)}\label{line:strength}          \tcc*[r]{Strength-of-connection of matrix}
      \If{\code{vector}}{        $S_\ell$ = \code{amalgamate}{($S_\ell$)}\label{line:amalgamate}               \tcc*[r]{Amalgate from degree-of-freedom to nodal}
      }
      $C_{\ell}$, roots = \code{aggregate}{($S_{\ell}$)}\label{line:aggregate}        \tcc*[r]{Construct aggregtes and root-nodes}
      \nonl\;
      $N_\ell \leftarrow S_{\ell}^d C_{\ell}$\label{line:sparsity}                    \tcc*[r]{Form interpolation sparsity pattern}
      \If{\code{prefilter}}{        $N_\ell$ = \code{filter}{($N_\ell$)}\label{line:prefilter}                    \tcc*{Eliminate small entries}
      }
      \If{\code{vector}}{        $N_\ell$, roots = \code{unamalgamate}{($N_\ell$, roots)}\label{line:unamal}   \tcc*[r]{From nodal to degree-of-freedom}
      }
      $N_\ell =$ \code{root\_node\_pattern}{($N_{\ell}$)}\label{line:pattern}         \tcc*[r]{Convert to root-node pattern}
      \nonl\;
            $B_{\ell}$           = \code{smooth}{($A_{\ell}$, $B_{\ell}$)}\label{Bimproveline}                         \tcc*[r]{Improve candidates with relaxation}
      $T_\ell, B_{\ell+1}$ = \code{inject}{($C_\ell$, $N_\ell$, $B_{\ell}$, roots)}\label{line:tent}              \tcc*[r]{Form tentative interpolation and $B_{\ell+1}$}
      $P_{\ell}$           = \code{improve}{($A_{\ell}$, $T_\ell$, $B_{\ell}$, $B_{\ell+1}$)}\label{line:improve} \tcc*[r]{Create smooth $P_\ell$ with $P_{\ell} B_{\ell+1} = B_{\ell}$}
      \If{\code{postfilter}}{         $P_\ell$    = \code{filter}{($P_\ell$)}\label{line:postfilter}                                           \tcc*{Eliminate small entries}
         $P_\ell$    = \code{enforce}{($P_\ell$, $B_{\ell}$, $B_{\ell+1}$ )}\label{line:enforce}                                           \tcc*{Enforce mode constraint with eqn. (\ref{eqn:Tproj})}
         $P_{\ell}$  = \code{improve}{($A_{\ell}$, $P_\ell$, $B_{\ell}$, $B_{\ell+1}$)}                           \tcc*[r]{Re-smooth $P$ with one iteration}
      }
                  \If{\code{symmetric}$(A_\ell)$}{         $R_\ell = P_\ell^T$
      }
      \Else{         $\hat{B}_{\ell}$                 = \code{smooth}{($A^T_{\ell}$, $\hat{B}_{\ell}$)}                       \tcc*[r]{Improve candidates for the non-symmetric case}
         $\hat{T}_\ell, \hat{B}_{\ell+1}$ = \code{inject}{($C_\ell$, $N_\ell$, $\hat{B}_{\ell}$, roots)}          \tcc*[r]{Form tentative restriction and $\hat{B}_{\ell+1}$}
         $R_\ell^T$ = \code{improve}{($A^T_{\ell}$, $\hat{T}_\ell$, $\hat{B}_{\ell}$, $\hat{B}_{\ell+1}$)}        \tcc*[r]{Create smooth $R_\ell$ with $R_{\ell}^T \hat{B}_{\ell+1} = \hat{B}_{\ell}$}
      \If{\code{postfilter}}{         $R^T_\ell$    = \code{filter}{($R^T_\ell$)}\label{line:Rpostfilter}                                           \tcc*{Eliminate small entries}
         $R^T_\ell$    = \code{enforce}{($R^T_\ell$, $\hat{B}_{\ell}$, $\hat{B}_{\ell+1}$ )}\label{line:Renforce}                                           \tcc*{Enforce mode constraint with eqn. (\ref{eqn:Tproj})}
         $R^T_{\ell}$  = \code{improve}{($A^T_{\ell}$, $R^T_\ell$, $\hat{B}_{\ell}$, $\hat{B}_{\ell+1}$)}                           \tcc*[r]{Re-smooth $R$ with one iteration}
         }
      }
            $A_{\ell+1}$ = $R_{\ell} A_{\ell} P_{\ell}$\label{line:galerkin}                                            \tcc*[r]{Form coarse-grid}
      $\ell = \ell+1$
   }
       \caption{\code{root\_node\_setup}}\label{alg:rn_setup}
\end{algorithm2e}

\subsection{Candidate vectors $B$}

As with \saamg, \textit{a priori} knowledge of the algebraically smooth error
is assumed as input in the form of a set of candidate vectors $B$. These
vectors are critical for ensuring accurate interpolation of important
algebraically smooth modes. In the case of $A$ being symmetric, one set
of candidate vectors, $B$, is sufficient. In the case of non-symmetric problems, a restriction
operator $R$ is formed independently (whereas $R = P^T$ in the case of symmetry)
through the \textit{left} candidate vectors $\hat{B}$, which target
smooth error in $A^T$. Generally if candidate vectors are not known or
provided, a constant vector is used, as the constant is geometrically smooth,
and a good choice for many problems. Finally, to ensure smooth (including at the boundaries)
candidate vectors, a small number of relaxation sweeps are applied to $A_\ell B_\ell = \mathbf{0}$
(line~\ref{Bimproveline}). This improves the algebraic smoothness
of $B_\ell$, especially near boundaries of a domain.  Even for textbook
examples such as a Laplacian, the standard candidate $B_\ell = {\bf 1}$ can
yield a poor approximation to algebraically smooth error near Dirichlet
boundaries.

\subsection{Strength matrix $S$}

The first level-specific step is the construction of an $n_\ell \times n_\ell$
SOC matrix, $S_\ell$ (line~\ref{line:strength}), which indicates strong
connections between DOFs in the problem. This matrix is used for the
aggregation of DOFs, and for the construction of a sparsity pattern,
$\mathcal{N}$, for $P$.  This work considers the classical strength
measure~\cite{RuStu1987}, symmetric strength measure~\cite{VaMaBr1996}, and
so-called evolution strength measure~\cite{OlScTu2009}, although other measures
have also been proposed~\cite{Brandt:2011tu,BrBrMaMaMcRu2006}.
The classical and symmetric strength measures essentially look at the magnitude
of an off-diagonal entry when determining if it represents a strong connection.
In contrast, the evolution measure computes strength around a DOF $i$ by
locally evolving a unit vector centered at $i$
with a few sweeps of weighted-Jacobi.  This creates a
locally smooth vector which is then post-processed to determine which matrix
entries in row $i$ are strong.
For instance, for anisotropic diffusion, the directions in which the unit vector
diffuses most quickly, are selected as strong connections.

After finishing, each method produces a matrix $S_{\ell}$ where individual
entries represent the strength-of-connection in the graph of $A_{\ell}$.  One modification to $S_{\ell}$
that is used here is to normalize each row so that
\begin{equation}
      \max_{j\ne i}{(S_\ell)}_{ij} = {(S_\ell)}_{ii} = 1. \label{eq:soc}
\end{equation}
That is, all elements are non-negative, and for each row in $S_\ell$, the diagonal and the largest
off-diagonal, i.e., the strongest connection, both equal 1. This scaling and the non-binary nature
of $S_\ell$ are important when computing the sparsity pattern in line~\ref{line:sparsity}.

For vector-based problems, $A_\ell$ has a block structure of block-size
$m\times m$ and for common cases such as elasticity, each block corresponds to
the DOFs associated with different variables but defined at the same spatial
node.  It is typical to group each block into a single
so-called \textit{supernode}~\cite{VaMaBr1996}, followed by aggregation only at the
supernode level.  This requires condensing the SOC with an \code{amalgamate}
step in line~\ref{line:amalgamate}, which reduces $S_\ell$ to an $n_\ell/m
\times n_\ell/m$ matrix. The amalgamated entry is equal to the maximal entry of
its associated $m \times m$ block in $S_\ell$.
This allows for aggregation based on supernodes.

\subsection{Aggregation}

The next operation is forming an aggregation, $\mathcal{A}_\ell$, as in \saamg\
(see Algorithm~\ref{alg:SAAMG}), and an associated list of root-nodes
(line~\ref{line:aggregate}).  The classical greedy aggregation
algorithm~\cite{VaMaBr1996} is used, wherein an unaggregated vertex in $S_\ell$
is selected (as the root-node) and all neighboring vertices with strong edge
weights are collected to form an aggregate.  An aggregation pattern matrix
$C_\ell$ is then defined as a partition of unity, such that $C_\ell(i,j) = 1$
if node $i$ belongs to aggregate $j$, and zero otherwise.  This pattern will be
used in forming the sparsity pattern for $P_\ell$. Other aggregation
routines such as pairwise aggregation~\cite{DAmbra:2013iwa,Notay:2010um}
have been considered, but have not demonstrated improvements in \rnamg\ performance, and
the choice of root node is less clear than in a greedy aggregation routine.

Example aggregates for standard isotropic diffusion problems are given in
Figure~\ref{fig:sample_agg}.  For the systems case, $m$
DOFs at each supernode implies that each root-node also contains
$m$ DOFs.
\begin{figure}[!ht]
     \centering
       \includegraphics[width=0.85\textwidth]{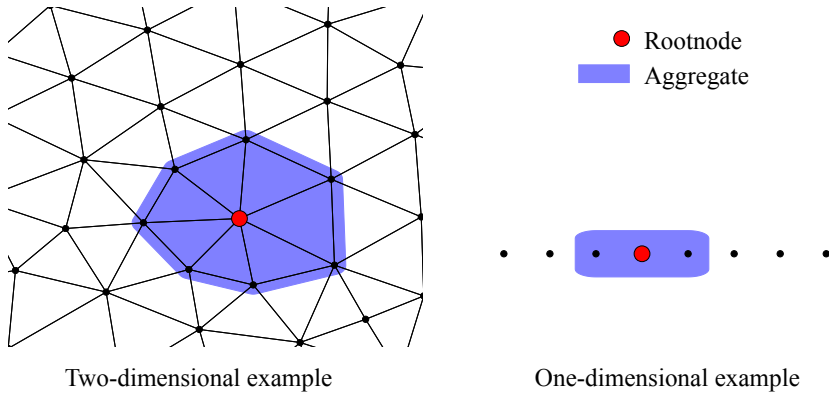}
         \caption{Sample aggregates on the finest level for standard isotropic diffusion in one and two dimensions.}\label{fig:sample_agg}
\end{figure}

\subsection{Arbitrary sparsity pattern $\mathcal{N}$}

Aggregation gives an interpolation structure, with
each root-node corresponding to one block column of $T$ and $P$ as
in~\eqref{eq:exampleinterp}, but not a sparsity pattern for $P$.
Thus, in Lines~\ref{line:sparsity}--\ref{line:pattern}, the sparsity pattern
$\mathcal{N}_\ell$ for interpolation is built, where
$(i,j) \not\in \mathcal{N}_\ell \Leftrightarrow N_\ell(i,j) = 0$.
Nonzero elements are based on growing the aggregation pattern
matrix, $C_\ell$, based on the strength matrix, $S_\ell$, through
multiplication $N_\ell = S_\ell^d C_\ell$, for some \textit{degree} or \textit{distance} $d$.
Using $d$ applications of $S_\ell$ extends the
interpolation stencil for a given F-point
to a distance of $d$ in the strength matrix.
Large values of $d$ allow
the sparsity pattern to grow in the direction of strong connections, allowing
for long-distance interpolation.  This approach differs from~\cite{OlScTu2011,
Sc2012} in that a normalized SOC matrix~\eqref{eq:soc} is used with
$C_\ell$, and as a result,
$N_\ell$ can be \emph{filtered} (line~\ref{line:prefilter}) by examining the magnitude of the
entries: larger entries indicate a stronger path from the root-node~$i$.
As a result, the normalization of $S_{\ell}$ imbues a relative size across columns so that the product with
the binary aggregation matrix $C_{\ell}$ yields individual entries related to strength-of-connection.

In the case of vector problems, the \texttt{unamalgamate} step in line~\ref{line:unamal} reverses amalgamation
and converts the list of root-nodes into a list of DOFs.  If the
amalgamated list of roots is of length $n_{\ell+1} / m$, then the
unamalgamated list is of length $n_{\ell}$.  Similarly, the unamalgamated entry
$N_{\ell}(i,j)$ is equal to the amalgamated entry
$N_{\ell}(\lfloor i/m \rfloor, \lfloor j/m \rfloor)$.

The last step in computing the sparsity is \texttt{root\_node\_pattern} in
line~\ref{line:pattern}.  This function enforces the root-node pattern of
interpolation in equation (\ref{eq:cfinterp}) by traversing $N_{\ell}$
to change each root-node row to be the corresponding row of the identity~---~i.e.,
the restriction to each root-node for the coarse-grid does not involve other root-nodes,
and each root-node is interpolated by value back to the fine grid.
Other than this root-node requirement,
\rnamg\ allows for arbitrary sparsity patterns and enables
selective control of the number of interpolation points.  This flexibility is used
in the next section, where the sparsity pattern is filtered (dropping entries) in a
way that targets only \emph{strong} long-distance connections in $P_{\ell}$.

It should be noted that the sparsity pattern for $R^T$ is built in the same manner as
$P$: by expanding a tentative operator with a SOC matrix based on $A$.
Another option is to construct the sparsity pattern for $R$ based on
a SOC from $A^T$; however, numerical results consistently indicate a degradation
in convergence factors when doing so.
Choosing the \textit{optimal} sparsity pattern for both $P$ and for $R$ remains an important and open research question.

\subsection{Filtering sparsity pattern}
\label{sec:filtering}

A filtering step, which removes nonzeros in $N_{\ell}$, is used after the
construction of the sparsity pattern for $P$.  A large degree $d$~(see
line~\ref{line:sparsity}) is often needed~\cite{Sc2012, KeMaSc2015} to
construct effective interpolation operators; filtering can limit the
additional cost due to the growth in the sparsity pattern. In particular,
filtering allows for long-distance interpolation in the direction of strong connections,
while limiting complexity.

Pre-filtering is used to filter the sparsity
pattern matrix $N_\ell$ before energy-minimization.
Here, entries are eliminated in $N_\ell$ prior to constructing $P_\ell$ based on the
size of entries in $N_\ell = S_\ell^d C_\ell$, which indicate the
strength-of-connection between two DOFs. Because pre-filtering is only based
on SOC and not the fully formed interpolation operator, it is possible that influential
entries are inadvertently removed, thus degrading convergence. However,
in practice trimming the sparsity pattern of $P_\ell$ before initiating the construction
significantly lowers the SC in many cases, with minimal impact on AMG convergence.

Given an initial sparsity pattern, entries are filtered as in~\cite{DeFaNoYa_2008}, by either
retaining the $k$ largest values in a row or by applying a drop
tolerance $\theta$.
Algorithm~\ref{alg:filter}
describes this process in detail, where
$\max(G,i,k)$ is the $k$th largest off-diagonal entry in row $i$.
The idea of pre-filtering has shown to be effective for model problems
using a polynomial approximation to $A_{ff}^{-1}$ in~\cite{Brannick:2007fb}. In contrast, the pre-filtering
used here is less expensive and relies on values already computed by the root-node algorithm.
\begin{algorithm2e}[!ht]
  \caption{\texttt{filter{$(G)$}}}\label{alg:filter}
   \DontPrintSemicolon   \KwIn{      \begin{tabular}[t]{l l}
      $G$:                 & matrix to be filtered \\
      $\theta$:            & filtering drop-tolerance \\
      $k$:                 & filtering threshold \\
      \end{tabular}
   }
   \KwOut{      \begin{tabular}[t]{l}
      $G$
      \end{tabular}
   }
   \If{$k$}{     \For{$|G(i,j)| < \max(G,i,k)$}{       $G(i,j) \leftarrow 0$\label{eq:filter_k}
     }
   }
   \If{$\theta$}{     \For{$|G(i,j)| < \theta \max(G,i,1)$}{     $G(i,j) \leftarrow 0$\label{eq:filter_theta}
   }
   }
\end{algorithm2e}

\subsection{Interpolation construction}
\label{sec:interp}

A \textit{tentative} interpolation operator, $T$, is constructed from the sparsity pattern.
The full interpolation is then formed based on a constrained energy-minimization
with the following principles:
\begin{enumerate}[label={\bf\roman*.}]
  \item $T$ and $P$ satisfy interpolation constraints of provided algebraically smooth
  	candidates. That is,
    \begin{equation}\label{eqn:mode_constraint}
        T B_c  = B \quad \textnormal{and} \quad P B_c  = B,
    \end{equation}
    for candidates $B$, and coarse level candidates $B_c$.
  \item The \texttt{improve} procedure reduces the \textit{energy} of each $j$th column of $P$:
    \begin{equation}
      \|P_{(j)}\| \leq \|T_{(j)}\|,
    \end{equation}
    for some $A$-induced norm $\|\cdot\|$.  For instance, interpolation smoothing in \saamg\ is one example.
  \item Given sparsity pattern $\mathcal{N}$,
    \begin{equation}
      T_{ij} = 0\quad\textnormal{if $(i,j) \notin\mathcal{N}$} \quad \textnormal{and} \quad       P_{ij} = 0\quad\textnormal{if $(i,j) \notin\mathcal{N}$}.     \end{equation}
\end{enumerate}
The energy-minimization approach~\cite{OlScTu2011} is used here, which
satisfies these principles.
Energy-minimization is an iterative smoothing process that improves $P$,
through several passes.  As a result, growth in the sparsity
pattern of $P$ necessitates a constraint on the sparsity pattern
constraint, $\mathcal{N}$.  However, in enforcing $\mathcal{N}$ by dropping
entries in $P$, the constraints are no longer satisfied.  In response, the constraints are
enforced as an additional step.

Root-node AMG proceeds by taking the aggregation and list of root-nodes to construct the
coarse-grid candidates by injection, $B_{\ell+1}(i,j) = B_{\ell}(k,j)$, where $k$
is the $i$th root-node. If $m$ is the block size of the original matrix ($m=1$ for
a scalar problem), then \textit{only} the first $m$
candidates are injected over each aggregate to form an initial
$T_\ell$ (line~\ref{line:tent}). As a result, each root-node represents $m$ DOFs on the coarse grid.
An additional step is performed on $T_\ell$, normalizing each column so that the coarse-grid
variables inject to the fine-grid root-nodes. This process yields the following form
\begin{equation}\label{eq:root-nodeform}
  T_\ell =
  \begin{array}{c@{}c}
    \left[
      \begin{array}{c}
        W_\ell \\
        I\\
      \end{array}
    \right]
  &
  \begin{array}{l}
    \} \,\,\textnormal{Non Root-nodes}\\
    \} \,\,\textnormal{Root-nodes}\\
  \end{array}
  \end{array}.
\end{equation}
For $m=1$, $T_\ell$ has non-overlapping columns; for $m>1$, $W_\ell$ is block diagonal,
as $T$ in \eqref{eq:exampleinterp}. With the identity over C-points in $T_{\ell}$, \rnamg\ 
resembles that of \cfamg\ (cf.~\eqref{eq:cfinterp}). 

If there are more than $m$ candidates, the remaining candidates are projected into
$\sspan(T_\ell)$ in the Euclidean inner-product. In \rnamg, it is assumed that the
sparsity pattern has sufficient DOFs that this is an underdetermined problem.
This results in the minimal norm update to each row of $T_\ell$ such that
$T_\ell B_{\ell+1} = B_\ell$ and $T_\ell$ obeys the sparsity pattern
$\mathcal{N}_\ell$.  More specifically, an update $u$ to only the
allowed nonzero portion of row $i$ of $T_\ell$ (called $t$) is computed
by solving
\begin{equation}\label{eqn:Tproj}
   (t + u)B_{\ell+1} = B_\ell \quad \Leftrightarrow \quad B_{\ell+1}^T u^T = B_\ell^T - B_{\ell+1}^T t^T,
\end{equation}
for $u$ using least-squares. Note that only $W_\ell$ is modified in \eqref{eqn:Tproj}, that is,
injection over C-points is maintained and $W_\ell$ is expanded to interpolate candidate vectors.

Next, interpolation $P_\ell$ is formed using \code{improve} in
line~\ref{line:improve}.
Energy-minimization forms  $P_\ell$ as a succession of energy-minimization
updates to $T_\ell$.  Each update $U$ is computed to reduce the energy of each
column via a Krylov process and to also satisfy $U B_{\ell+1} = 0$.  As a result, (i) $(T_\ell + U)
B_{\ell+1} = T_\ell B_{\ell+1}$ (constraints are satisfied exactly),
and (ii) $\|T_\ell + U\| \leq \|T-\ell\|$ (energy is reduced).
The projection operator enforcing the constraints is analogous to~\eqref{eqn:Tproj}~(see~\cite{OlScTu2011}).

\subsection{Filtering interpolation}
\label{sec:post-filtering}

After $P_\ell$ is formed, a post-filtering process (similar to pre-filtering in Section~\ref{sec:filtering})
is applied in line~\ref{line:postfilter} to reduce complexity. Post-filtering removes elements directly
from $P_\ell$ after smoothing, but this leads to a $P_{\ell}$ that violates the mode interpolation 
constraints \eqref{eqn:mode_constraint}.  Thus, the function \code{enforce} is used to re-apply 
these constraints via \eqref{eqn:Tproj}.  
Finally, an additional iteration of \code{improve} is used to account for large increases in energy caused by
removing entries. One advantage of post-filtering is that element removal is based on the
actual smoothed interpolation stencil entries, and is thus less likely to inadvertently
degrade convergence compared with pre-filtering~---~i.e.\ by removing important entries
from $P$. Post-filtering generally results in a lower complexity in the Galerkin coarse-grid operator
(line~\ref{line:galerkin}) and all subsequent coarser grid operations.
A similar filtering approach is also effective for classical AMG methods~\cite{DeFaNoYa_2008}.
However, post-filtering does not reduce the OC and SC
as effectively as pre-filtering, in particular because energy-minimization (one of the dominant
costs in SC) is applied to a larger sparsity pattern, which is then trimmed afterwards.

\subsection{Coarse-grid construction}

The final step in Algorithm~\ref{alg:rn_setup} is the construction of the coarse-grid
operator through a (Petrov) Galerkin triple-matrix product $A_{\ell+1}=R_\ell A_\ell P_\ell$.  For
symmetric $A_\ell$, restriction is the transpose of interpolation.
For non-symmetric $A_\ell$, the interpolation construction process is
duplicated for $R_{\ell}^T$ using $A_\ell^T$ (the $\hat{\cdot}$ notation is
used to denote the quantities used to compute $R_{\ell}$)~\cite{BrMaMcRuSa2010}.

\subsection{Discussion: Compare and contrast \rnamg\ with \saamg}

Similar to \saamg, \rnamg\ facilitates the use of
multiple, arbitrary candidate vectors, but handles the complexity challenges by
not adding columns to $P$ for each additional candidate. The dimensions of $T$
and $P$ are fixed, with the number of columns in $T$ and $P$ being equal to
the number of aggregates.  The candidates $B$ are projected exactly into $\sspan(P)$ and
$\sspan(T)$, assuming a sparsity pattern with enough entries.  With
\saamg, for each candidate vector added to $B$, a new column is added to $T$
and $P$ for every aggregate (see~\eqref{eq:exampleinterp}). This difference allows \rnamg\ to
exhibit significantly lower complexity than \saamg\ in some instances.

Root-node AMG also uses interpolation smoothing, like \saamg, so that $P$ is
iteratively improved. However, standard interpolation smoothing does
\textit{not} satisfy the exact candidate interpolation constraints in general,
although it does attempt to target the same three principles outlined in
Section~\ref{sec:interp} (including accurate but not exact candidate vector interpolation).
A consequence of satisfying the constraints is that the candidate vectors are
exactly represented in the range of interpolation.  To measure this,
consider the error in interpolating $B$ found through an orthogonal projection
of $B$ into the range of interpolation:
\begin{equation} e_{B} = (I - P P^{\dagger})B, \end{equation}
where $P^{\dagger}$ is the pseudo-inverse of $P$.  The point-wise values of the
error $e_B$ are shown in Figure~\ref{fig:agg1d-1-CSR-Berr}, where \rnamg\
achieves much lower error, directly satisfying the constraints (globally).
\begin{figure}[!ht]
   \centering
   \includegraphics[width=0.48\textwidth]{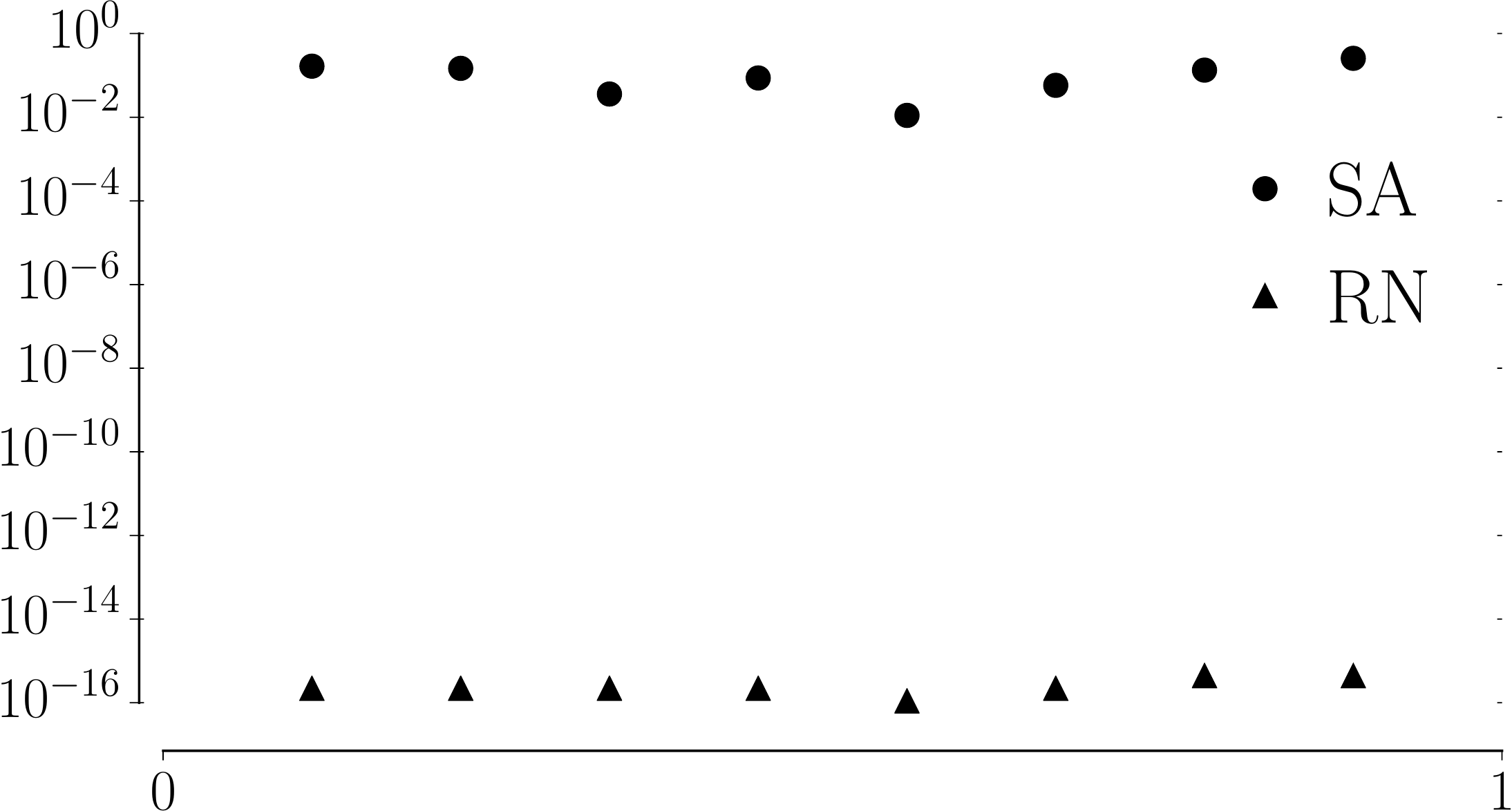}
   \caption{Error $e_B$ in satisfying the constraints $B \equiv {\bf 1} \in
      \sspan(P)$ for a 1D Poisson problem with eight DOFs using central finite
    differences.}\label{fig:agg1d-1-CSR-Berr}
                \end{figure}

Additionally, standard interpolation smoothing does not allow for multiple
smoothing passes (and hence longer-distance interpolation) without suffering
from complexity issues and fill-in in $P$.  The arbitrary sparsity
pattern allows \rnamg\ to have longer-distance interpolation and multiple smoothing iterations,
while also effectively managing complexity.

An \saamg-type method supporting vector problems can be derived from
Algorithm~\ref{alg:rn_setup} by removing the root-node specific lines and by
using the \texttt{inject} and \texttt{smooth} functions from
Algorithm~\ref{alg:SAAMG} to form $T_\ell$ and $P_\ell$.  This form of \saamg\
also supports non-symmetric problems~\cite{Sala:2008cv}.

\subsection{Discussion: Compare and contrast \rnamg\ with \cfamg}

Compared with \cfamg, \rnamg\ adopts the structure of $P$ from
(\ref{eq:cfinterp}), which aids the use of pre- and post-filtering since the columns of interpolation
are normalized (with the identity).
(In Section~\ref{sec:theory}, additional theoretical motivation for
using this form is investigated.)  To understand this, consider how the CF AMG
structure~\eqref{eq:root-nodeform} scales interpolation around each root-node
for a simple 1D Laplace example on eight nodes with finite differencing
and $B={\bf 1}$.  In this case, three aggregates are formed  as in
Figure~\ref{fig:agg1d-setup}.  Each column of $T$ is shown in
Figure~\ref{fig:agg1d-1-CSR-RN-T}, where a relative balance in weighting across
aggregates is observed, in contrast to \saamg, which is shown in
Figure~\ref{fig:agg1d-1-CSR-SA}.  This balance in weighting is evident again in
the final $P$ in Figure~\ref{fig:agg1d-1-CSR-RN-P}, again because the identity
form is preserved according to equation~\eqref{eq:root-nodeform}.

This scaling aids the filtering strategy, which eliminates relatively small interpolation
weights in the rows of $P$, because it provides the rationale for
comparing the magnitude of entries across columns.  In other words, \rnamg\ stipulates that a
large interpolation weight is one, representing injection from the coarse to
fine grid, visible as the peaks in Figure~\ref{fig:agg1d-1-CSR-RN-P}.
\begin{figure}[!ht]
  \centering
  \begin{subfigure}[b]{0.48\textwidth}
    \includegraphics[width=\textwidth]{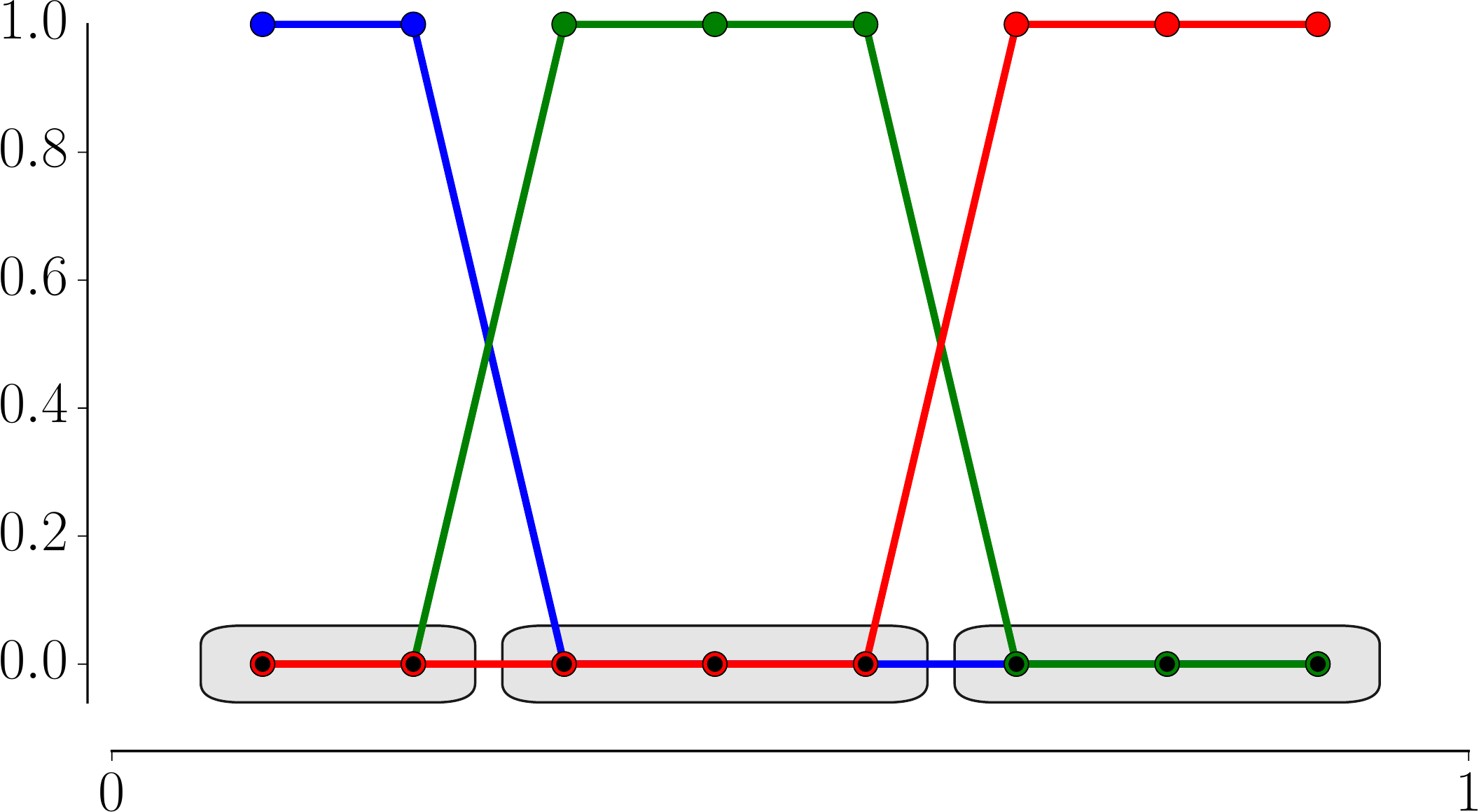}
    \caption{Columns of tentative interpolation $T$.}\label{fig:agg1d-1-CSR-RN-T}
  \end{subfigure}
  \hfill
  \begin{subfigure}[b]{0.48\textwidth}
    \includegraphics[width=\textwidth]{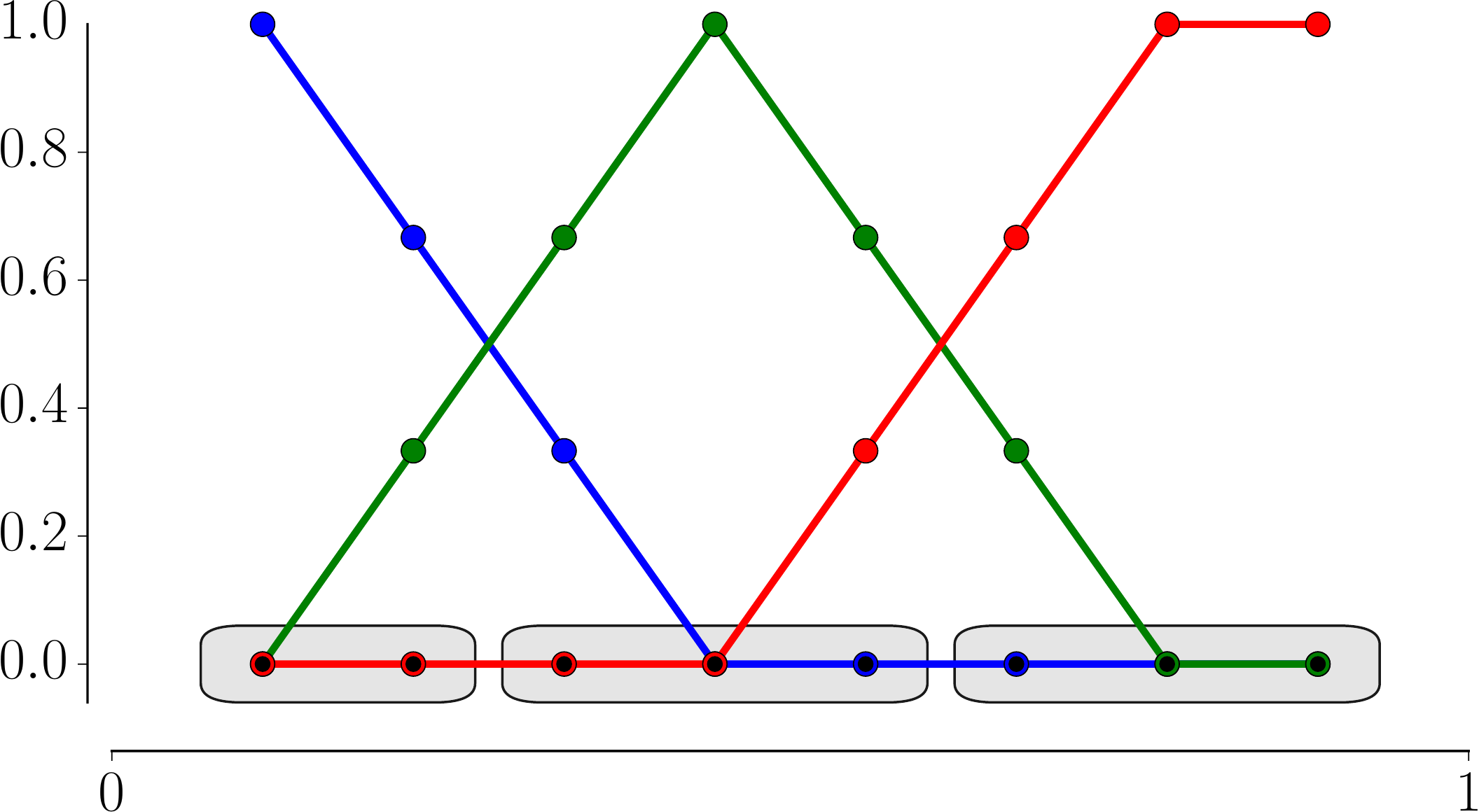}
    \caption{Columns of interpolation $P$.}\label{fig:agg1d-1-CSR-RN-P}
  \end{subfigure}
  \caption{Interpolation using a single candidate $B={\bf 1}$ in \rnamg.}\label{fig:agg1d-1-CSR}
\end{figure}

In contrast to \cfamg, there are the many similarities that \rnamg\ shares with
\saamg, with the ability to iteratively improve $P$ and guarantee accurate
interpolation of the user-defined candidate vectors $B$ being key differences.
Regarding similarities to more recent adaptive CF-style AMG methods,
bootstrap AMG (BAMG)~\cite{BrBrKaLi2011} also fits multiple candidate
vectors into the range of interpolation, but does so by generating a set of many candidates
and over-determining each row of $P$.  In this respect, \rnamg\
is closer to \saamg, as each row of $P$ is under-determined and only
fits a small number of candidate vectors into the range of $P$, using interpolation
smoothing to fully determine each interpolation entry.

In contrast to both \cfamg\ and \saamg, \rnamg\ allows for automatically
expanded sparsity patterns $\mathcal{N}_\ell$, which can be filtered, to
facilitate long-distance interpolation. This is critical for robustness and
performance, as shown in the examples in Section~\ref{sec:numerical}, and is a
unique feature of \rnamg.

\section{Theoretical motivation for root-node}\label{sec:theory}

In this section, theoretical motivation for RN AMG is introduced.
Initially, it is assumed that $A$ is SPD in order
to connect with classical AMG theory. Based on relations established in the
symmetric setting, some results are extended to non-symmetric systems in Section~\ref{sec:theory:nonsymm}.

\subsection{The symmetric case}

Let $A\in\mathbb{R}^{n\times n}$ be SPD, $\|\cdot\|$ and $\|\cdot\|_A$ represent the
$l^2$- and $A$-norms, respectively, and $P\,:\, \mathbb{R}^{n_c}\rightarrow\mathbb{R}^n$
be an interpolation operator defining a coarse space of size $n_c$.  The error propagation
operator for a two-grid method is given by
\begin{equation}
  E_{\textnormal{TG}} = I - P {(P^T A P)}^{-1} P^T A,
\end{equation}
and a multilevel version, $E_{\textnormal{MG}}$ is similarly defined~(see~\cite{2014MaOlamgtheory}).
Here, bounds on the $A$-norm of $E_{\textnormal{TG}}$ and
$E_{\textnormal{MG}}$ are constructed.
The \textit{weak approximation property} (WAP) gives necessary and sufficient
conditions for two-grid convergence as follows: there exists $K\in\mathbb{R}$ such that for any vector $\mathbf{u} \in \mathbb{R}^n$,
\begin{equation}\label{eq:wap}
\min_{\mathbf{w}_c\in\mathbb{R}^{n_c}} \|\mathbf{u} - P \mathbf{w}_c\|^2 \leq \frac{K}{\|A\|} \|\mathbf{u}\|_A^2,
\end{equation}
wherein $\|E_{TG}\|_A = 1 - \frac{1}{K_{TG}}$ and $1\leq K_{TG} \leq K$~\cite{Va2008}. For simplicity,~\eqref{eq:wap} is based on Richardson relaxation. A WAP with respect
to a general relaxation scheme along with a tight bound on $K_{TG}$ can be found in~\cite{FaVa2004,Va2008}. The \textit{strong approximation property} (SAP)
establishes multilevel convergence with a stronger
condition: there exists $K\in\mathbb{R}$ such that for any vector $\mathbf{u} \in \mathbb{R}^{n}$,
\begin{equation}\label{eq:sap}
\min_{\mathbf{w}_c\in\mathbb{R}^{n_c}} \|\mathbf{u} - P \mathbf{w}_c\|_A^2 \leq \frac{K}{\|A\|} \|A \mathbf{u}\|^2.
\end{equation}
If~\eqref{eq:sap} holds on each level of the hierarchy, then $\|E_{MG}\|_A = 1 - \frac{1}{K_{MG}}$
and $1\leq K_{MG} \leq 1 + K\frac{\|M\|}{\|A\|}$,
where $M$ is the chosen relaxation scheme of the form $\mathbf{x}_{k+1} = \mathbf{x}_k +
M^{-1}\mathbf{r}_k$, for residual $\mathbf{r}$~\cite{Va2008}.

Since $A$ is assumed to be SPD, its eigenvectors form an $l^2$- and $A$-orthonormal
basis for the space $\mathbf{R}^n$. Thus, if the WAP and SAP hold for all eigenvectors,
they hold for all vectors, and it follows that
the WAP requires eigenvectors be interpolated with accuracy on the order
of the corresponding eigenvalue, and the SAP requires interpolation accuracy on the order
of the eigenvalue squared. This leads to an equivalence of satisfying the WAP
based on $A^2$ and the SAP for $A$ as follows:
\begin{lemma}[Lemma 5.20~\cite{Va2008}]\label{th:wap_sap}
Let $A \in\mathbb{R}^{n \times n}$ be SPD and $P \in \mathbb{R}^{n \times n_c}$. Then
\begin{align}
\min_{\mathbf{w}_c\in\mathbb{R}^{n_c}} \|\mathbf{u} - P \mathbf{w}_c\|^2 &\leq \frac{K^2}{\|A^2\|} \|\mathbf{u}\|_{A^2}^2 \hspace{3ex}\text{for all }\mathbf{u},\\
\intertext{if and only if}
\min_{w_c} \|\mathbf{u} - P \mathbf{w}_c\|_A^2 &\leq \frac{K}{\|A\|} \|A \mathbf{u}\|^2 \hspace{3ex}\text{for all }\mathbf{u}.
\end{align}
\end{lemma}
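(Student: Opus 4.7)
The plan is to fix $\mathbf{u} \in \mathbb{R}^n$, set $\mathbf{e} = (I-\pi_A)\mathbf{u}$ where $\pi_A$ denotes the $A$-orthogonal projection onto $\sspan(P)$, and to exploit the $A$-orthogonality $P^T A \mathbf{e} = 0$ together with the Cauchy--Schwarz inequality. The two quantities in the lemma are identified as $\|\mathbf{e}\|_A^2 = \min_{\mathbf{w}_c} \|\mathbf{u} - P\mathbf{w}_c\|_A^2$ and $\|(I-\pi)\mathbf{u}\|^2 = \min_{\mathbf{w}_c} \|\mathbf{u} - P\mathbf{w}_c\|^2$, with $\pi$ the $l^2$-orthogonal projection onto $\sspan(P)$; note $\|A^2\| = \|A\|^2$ and $\|\mathbf{u}\|_{A^2}^2 = \|A\mathbf{u}\|^2$ since $A$ is SPD.

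The engine of the proof consists of two identities, both derived from the $A$-orthogonality of $\mathbf{e}$. Splitting $\mathbf{e} = \pi\mathbf{e} + (I-\pi)\mathbf{e}$ and killing the $\pi\mathbf{e}$ contribution (which lies in $\sspan(P)$) produces
\[
  \|\mathbf{e}\|_A^2 = \langle A\mathbf{e},\, (I-\pi)\mathbf{e}\rangle \;\leq\; \|A\mathbf{e}\|\, \|(I-\pi)\mathbf{e}\|.
\]
Analogously, decomposing $A^{-1}\mathbf{e} = \pi_A A^{-1}\mathbf{e} + \mathbf{e}'$ with $\mathbf{e}' = (I-\pi_A)A^{-1}\mathbf{e}$, so that $\mathbf{e} = A\pi_A A^{-1}\mathbf{e} + A\mathbf{e}'$, and again using $A$-orthogonality to discard $A\pi_A A^{-1}\mathbf{e}$, yields
\[
  \|\mathbf{e}\|^2 = \langle \mathbf{e},\, \mathbf{e}'\rangle_A \;\leq\; \|\mathbf{e}\|_A\, \|\mathbf{e}'\|_A.
\]

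For the $(\Leftarrow)$ direction (SAP $\Rightarrow$ WAP-$A^2$), start from $\|(I-\pi)\mathbf{u}\|^2 \leq \|\mathbf{e}\|^2$ (optimality of $\pi$ in $l^2$) and feed the second identity. Applying SAP to $\mathbf{u}$ gives $\|\mathbf{e}\|_A \leq \sqrt{K/\|A\|}\,\|A\mathbf{u}\|$, while applying SAP to $A^{-1}\mathbf{e}$ (noting $A\cdot A^{-1}\mathbf{e} = \mathbf{e}$) gives $\|\mathbf{e}'\|_A \leq \sqrt{K/\|A\|}\,\|\mathbf{e}\|$. Cauchy--Schwarz and division by $\|\mathbf{e}\|$ then deliver $\|\mathbf{e}\| \leq (K/\|A\|)\|A\mathbf{u}\|$, which squared is WAP-$A^2$ with constant $K^2/\|A\|^2$. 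For the $(\Rightarrow)$ direction, feed the first identity: applying WAP-$A^2$ to $\mathbf{e}$ gives $\|(I-\pi)\mathbf{e}\| \leq (K/\|A\|)\|A\mathbf{e}\|$, so $\|\mathbf{e}\|_A^2 \leq (K/\|A\|)\|A\mathbf{e}\|^2$; the final ingredient is an estimate that controls $\|A\mathbf{e}\|$ in terms of $\|A\mathbf{u}\|$.

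The main obstacle is the $(\Rightarrow)$ direction, specifically that last bound on $\|A\mathbf{e}\|$. Although $A\mathbf{e}$ is $l^2$-orthogonal to $\sspan(P)$ (from $P^T A \mathbf{e} = 0$), it is \emph{not} orthogonal to $\sspan(AP)$, so $\|A\mathbf{e}\| \leq \|A\mathbf{u}\|$ does not follow from a simple orthogonal-projection argument; instead one must work with the factorization $A(I-\pi_A) = A^{1/2}(I-\pi_Q)A^{1/2}$, where $\pi_Q$ is the $l^2$-orthogonal projection onto $\sspan(A^{1/2}P)$, and use that $I-\pi_Q$ is an orthogonal projection of norm one. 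The apparent asymmetry of $K$ versus $K^2$ across the two directions is a reflection of the Cauchy--Schwarz doubling: each application trades an $A$-norm factor for an $l^2$ factor at the price of a square root, so that the WAP-$A^2$ constant naturally equals the square of the SAP constant.
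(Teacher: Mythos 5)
The paper itself gives no proof of this lemma---it is quoted verbatim from Lemma 5.20 of \cite{Va2008}---so there is no in-paper argument to compare routes with; I can only assess your proposal on its own terms. Your $(\Leftarrow)$ direction is correct and complete: with $\mathbf{e}=(I-\pi_A)\mathbf{u}$, the identity $\|\mathbf{e}\|^2=\langle\mathbf{e},(I-\pi_A)A^{-1}\mathbf{e}\rangle_A$, two applications of the SAP (to $\mathbf{u}$ and to $A^{-1}\mathbf{e}$), and Cauchy--Schwarz give $\|\mathbf{e}\|\leq (K/\|A\|)\|A\mathbf{u}\|$; since $\|A^2\|=\|A\|^2$ and $\|\mathbf{u}\|_{A^2}=\|A\mathbf{u}\|$, squaring and using $\|(I-\pi)\mathbf{u}\|\leq\|\mathbf{e}\|$ yields the first display. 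This is the standard duality (Aubin--Nitsche-type) argument.

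The $(\Rightarrow)$ direction has a genuine gap, and the fix you sketch does not close it. From $\|\mathbf{e}\|_A^2\leq\tfrac{K}{\|A\|}\|A\mathbf{e}\|^2$ you still need $\|A\mathbf{e}\|$ controlled by $\|A\mathbf{u}\|$, but no such bound exists: $A(I-\pi_A)A^{-1}$ is an oblique projection whose $l^2$-norm is unbounded over SPD $A$ and admissible $P$; for $A=\mathrm{diag}(1,\lambda)$ and $P=(\sqrt{\lambda},1)^T$ one computes $\|A(I-\pi_A)A^{-1}\|=(\lambda+1)/(2\sqrt{\lambda})\rightarrow\infty$. Your proposed factorization $A(I-\pi_A)=A^{1/2}(I-\pi_Q)A^{1/2}$ with $\|I-\pi_Q\|=1$ only gives $\|A\mathbf{e}\|\leq\|A\|^{1/2}\|\mathbf{u}\|_A$, and substituting this produces $\|\mathbf{e}\|_A^2\leq K\|\mathbf{u}\|_A^2$, which is weaker than (and does not imply) the SAP. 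The way to finish this direction is to avoid $A\mathbf{e}$ entirely: by $A$-optimality of $\pi_A$ and $\|\mathbf{v}\|_A^2\leq\|A\|\,\|\mathbf{v}\|^2$, one has $\min_{\mathbf{w}_c}\|\mathbf{u}-P\mathbf{w}_c\|_A^2\leq\|(I-\pi)\mathbf{u}\|_A^2\leq\|A\|\,\|(I-\pi)\mathbf{u}\|^2\leq\tfrac{K^2}{\|A\|}\|A\mathbf{u}\|^2$, i.e.\ the SAP with constant $K^2$ in place of $K$. That loss is intrinsic, not an artifact: the same $2\times 2$ family shows the sharp SAP constant can strictly exceed the square root of the sharp WAP constant for $A^2$ (e.g.\ $\lambda=100$, $P=(1,1)^T$ gives roughly $99$ versus $71$), so the lemma must be read as an equivalence of the two properties with constants related in this way, which is all the paper relies on. Finally, your closing heuristic that the $K$-versus-$K^2$ asymmetry reflects Cauchy--Schwarz ``doubling'' in both directions has it backwards: the duality identity is precisely what avoids squaring in the direction you completed, while the square is incurred in the elementary direction you left open.
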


The accuracy demands of the WAP and SAP with respect to eigenvalues
indicates that the range of $P$ should contain
eigenvectors of $A$ associated with small eigenvalues (or so-called \textit{algebraically smooth} modes).
In building AMG hierarchies, this generally takes one of two forms, (i) ensuring that known
low-energy modes are exactly represented in the range of $P$, and (ii) minimizing columns of $P$ in the
$A$-norm so that the range of $P$ corresponds to
algebraically smooth vectors. Recall that RN AMG combines each of these approaches in
a constrained energy-minimization~\cite{Brannick:2007fb,OlScTu2011,Sc2012,2011_OlSc_scidac11}.

Given a CF-splitting of the current grid, consider a matrix ordering of the form $A =
\begin{bmatrix} A_{ff} & A_{fc}\\ A_{cf} & A_{cc}\end{bmatrix}$, where $A_{ff}$ corresponds
to F-point-to-F-point connections, $A_{fc}$ to F-point-to-C-point connections, and so on.
In a CF AMG context, interpolation is then assumed to have the form $P =
\begin{pmatrix} W\\I\end{pmatrix}$, where the lower identity block interpolates and restricts
C-points by injection and $W$ interpolates and restricts $F$-points based on
linear combinations. The minimizing coarse-grid vector, $\mathbf{w}_c$, in
the WAP and the SAP is given by $l^2$-orthogonal and $A$-orthogonal projections of the
vector $\mathbf{u}$ onto the range of $P$, respectively. In practice, such projections
are generally too expensive to form explicitly; thus, computable measures are also of
interest. One option consistent with \cfamg\ is to let $\mathbf{w}_c = \mathbf{u}_c$, that
is define $\mathbf{w}_c$ as the restriction of $\mathbf{u}$ to C-points. This provides a
bound on the WAP, as $\min_{\mathbf{w}_c\in\mathbb{R}^{n_c}} \|\mathbf{u} - P \mathbf{w}_c\|^2
\leq  \|\mathbf{u} - P \mathbf{u}_c\|^2$ for all $\mathbf{u}$, and thus
\begin{align}\label{eq:mu}
\mu(P) := \max_{\mathbf{u}\neq 0} \frac{\|\mathbf{u} - P \mathbf{u}_c\|^2}{\|\mathbf{u}\|_A^2} \geq K_{TG}.
\end{align}
Assuming $P = \begin{pmatrix} W\\I\end{pmatrix}$, the optimal interpolation operator
under $\mu(P)$ is given by
\begin{align}\label{eq:ideal}
P_{\textnormal{ideal}} & = \argmin_{P} \max_{\mathbf{u}} \frac{\|\mathbf{u} - P \mathbf{u}_c\|^2}{\|\mathbf{u}\|_A^2} \nonumber\\
& = \begin{bmatrix} -A_{ff}^{-1}A_{fc} \\ I\end{bmatrix},
\end{align}
where $P_{\textnormal{ideal}}$ is referred to as ``ideal interpolation.''

In addition to $P_{\textnormal{ideal}}$ being optimal with respect to the measure $\mu(P)$ (and
thus satisfying the WAP for some $K \geq K_{TG}$), if $A_{ff}$ is well-conditioned, then
$\|A_{ff}^{-1}\|$ is bounded by a small constant, in which case
$P_{\textnormal{ideal}}$ also satisfies the SAP\@:
\begin{align*}
\min_{\mathbf{w}_c\in\mathcal{V}_c} \|\mathbf{u} - P_{\textnormal{ideal}}\mathbf{w}_c\|_A^2 & \leq
	\|\mathbf{u} - P_{\textnormal{ideal}}\mathbf{u}_c\|_A^2
\leq  \|A_{ff}^{-1}\| \|A\mathbf{u}\|^2
\leq \frac{K}{\|A\|}\|A\mathbf{u}\|^2,
\end{align*}
for some $K$. While $P_{\textnormal{ideal}}$ indicates an effective interpolation scheme,
$A_{ff}^{-1}A_{fc}$ is often a dense matrix and difficult to compute. However, if $A_{ff}$
is well-conditioned, its entries decay exponentially fast away from the diagonal~\cite{Brannick:2007fb}, suggesting that a sparse approximation can be formed.
In addition to directly satisfying constraint vectors, the energy-minimization
piece of root-node interpolation constructs a sparse approximation to $P_{\textnormal{ideal}}$,
with the goal of retaining the convergence properties of ideal interpolation, while limiting
coarse-grid complexity.
In general, aggregation-based AMG is motivated through energy-minimization principles
over the columns of the interpolation operator~\cite{Janka:1999tc,Van:2001bw,Napov:2011tj,
Brezina:2012kl,Chen:2015vx,hu:2016}. The root-node approach, however, is supported by a more
complete theoretical motivation, as well as practical benefits including the use of aggregation
or CF-splittings for coarsening, and better conditioning of the coarse-grid operator in the
non-symmetric setting (Section~\ref{sec:theory:nonsymm}).

Observe that $A P_{\textnormal{ideal}} = \begin{pmatrix} 0 \\ S \end{pmatrix}$,
where $S=A_{cc} - A_{cf}A_{ff}^{-1}A_{fc}$ is the Schur complement of $A$. Given that
$AP_{\textnormal{ideal}} = 0$ over F-points, this motivates minimizing columns
of $P = \begin{pmatrix}W\\I\end{pmatrix}$ in the $A$-norm to approximate the action
of $P_{\textnormal{ideal}}$. The identity block over C-points along with any
constraints enforced ensure that columns of $W$ are nonzero (the solution to
minimizing a general $P$ in the $A$-norm without constraints is $P = \mathbf{0}$).
Coupled with a predetermined sparsity pattern and constraints $PB_c = B$, where
$B$ is a set of column-wise constraint vectors to be in the range of $P$ and $B_c$
$B$ restricted to C-points, energy minimization in the $A$-norm is exactly the conjugate
gradient variant of energy minimization proposed in~\cite{OlScTu2011}.
Lemma~\ref{th:cg} shows the relationship between $P_{\textnormal{ideal}}$ and
energy minimization. That is, the CG variant of energy minimization used to form $P$
over a given sparsity pattern is equivalent to minimizing the difference between
columns of $P$ and $P_{\textnormal{ideal}}$ in the $A$-norm, over a given sparsity
pattern.
\begin{lemma}\label{th:cg}
Let $A \in\mathbb{R}^{n \times n}$ be SPD, $P_{\textnormal{ideal}} \in \mathbb{R}^{n \times n_c}$
be given by~\eqref{eq:ideal}, and $\mathbf{e}_\ell$ the $\ell$th canonical basis vector, where
$\mathbf{p}_\ell = P\mathbf{e}_\ell$ is the $\ell$th column of $P$. Denote by $\mathcal{N}^{F}$ a
sparsity pattern for any matrix $W\in\mathbb{R}^{n_f \times n_c}$ where $W_{ij}\ne 0$ if
$(i,j) \in \mathcal{N}^F$. Then define the set $\mathcal{P}_\ell$ as the $\ell$th column
of any matrix with the structure of $P_{\textnormal{ideal}}$, restricted to
sparsity pattern $\mathcal{N}^F$, that is
\begin{equation}
\mathcal{P}_\ell = \bigg\{ P\mathbf{e}_l \, : \, P = \begin{bmatrix} W\\ I\end{bmatrix},
\textnormal{where $W \in\mathbb{R}^{n_f \times n_c}$ and $W_{ij} = 0$ if ${(i,j)} \not\in \mathcal{N}^{F}_{ij}$}\bigg\}. \end{equation}
Then for $l=1,\dots,n_c$,
\begin{equation}\label{eq:mins}
  \argmin_{\mathbf{p}_\ell\in\mathcal{P}_\ell} \left\|\mathbf{p}_\ell\right\|_{A} =
  	\argmin_{\mathbf{p}_\ell \in\mathcal{P}_\ell} \left\| \mathbf{p}_\ell - P_{\textnormal{ideal}}\mathbf{e}_\ell\right\|_A.
\end{equation}
Equivalently, minimizing the columns of $P$ in the $A$-norm is equivalent to minimizing
the difference between columns of $P$ and $P_{\textnormal{ideal}}$ in the $A$-norm.
\end{lemma}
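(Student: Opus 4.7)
The plan is to show that, over the feasible set $\mathcal{P}_\ell$, the two objectives in~\eqref{eq:mins} differ only by an additive constant, so their minimizers coincide. The key observation is that every $\mathbf{p}_\ell \in \mathcal{P}_\ell$ has the same C-point block: when partitioned according to the F/C ordering, $\mathbf{p}_\ell = \begin{bmatrix} \mathbf{w}_\ell \\ \mathbf{e}_\ell\end{bmatrix}$, where only the F-point block $\mathbf{w}_\ell$ (restricted to $\mathcal{N}^F$) varies. In particular, $P_{\textnormal{ideal}}\mathbf{e}_\ell = \begin{bmatrix} -A_{ff}^{-1}A_{fc}\mathbf{e}_\ell \\ \mathbf{e}_\ell\end{bmatrix} \in \mathcal{P}_\ell$ if its F-point block happens to lie in $\mathcal{N}^F$; in general it need not, but it remains a fixed vector independent of the minimization variable.

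The main step is to expand the right-hand side of~\eqref{eq:mins} as
\begin{equation*}
  \left\|\mathbf{p}_\ell - P_{\textnormal{ideal}}\mathbf{e}_\ell\right\|_A^2
  = \left\|\mathbf{p}_\ell\right\|_A^2
  - 2\,\mathbf{p}_\ell^T A P_{\textnormal{ideal}}\mathbf{e}_\ell
  + \left\|P_{\textnormal{ideal}}\mathbf{e}_\ell\right\|_A^2
\end{equation*}
and to show that the cross term is independent of $\mathbf{p}_\ell \in \mathcal{P}_\ell$. Using the block form of $A$ and the definition of $P_{\textnormal{ideal}}$, a direct computation gives $A P_{\textnormal{ideal}} = \begin{bmatrix} 0 \\ S\end{bmatrix}$, where $S = A_{cc} - A_{cf}A_{ff}^{-1}A_{fc}$ is the Schur complement. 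Hence $A P_{\textnormal{ideal}}\mathbf{e}_\ell = \begin{bmatrix} \mathbf{0} \\ S\mathbf{e}_\ell\end{bmatrix}$, and pairing this with the block form of $\mathbf{p}_\ell$ yields
\begin{equation*}
  \mathbf{p}_\ell^T A P_{\textnormal{ideal}}\mathbf{e}_\ell
  = \mathbf{w}_\ell^T \mathbf{0} + \mathbf{e}_\ell^T S\mathbf{e}_\ell
  = \mathbf{e}_\ell^T S \mathbf{e}_\ell,
\end{equation*}
which depends only on $\ell$, not on the free variable $\mathbf{w}_\ell$.

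With this in hand, the conclusion is immediate: the objective $\|\mathbf{p}_\ell - P_{\textnormal{ideal}}\mathbf{e}_\ell\|_A^2$ equals $\|\mathbf{p}_\ell\|_A^2$ plus a quantity that is constant on $\mathcal{P}_\ell$, so the argmins over $\mathcal{P}_\ell$ agree. The only subtle point (and the main place to be careful) is verifying that the identity block of $P$ truly forces the C-point entries of every feasible $\mathbf{p}_\ell$ to be $\mathbf{e}_\ell$; once this is in place the algebra is routine. No additional assumption on the sparsity pattern $\mathcal{N}^F$ is needed, and in particular $P_{\textnormal{ideal}}\mathbf{e}_\ell$ does not itself have to lie in $\mathcal{P}_\ell$ for the equivalence to hold.
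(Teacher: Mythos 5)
Your proposal is correct, but it proves the lemma by a genuinely different route than the paper. The paper's proof computes the first variation of the two quadratic functionals $G(\mathbf{w}_\ell)=\langle A\mathbf{p}_\ell,\mathbf{p}_\ell\rangle$ and $H(\mathbf{w}_\ell)=\langle A(P-P_{\textnormal{ideal}})\mathbf{e}_\ell,(P-P_{\textnormal{ideal}})\mathbf{e}_\ell\rangle$, with the sparsity pattern imposed through a diagonal selection matrix $N_\ell$, and observes that both variations lead to the same weak form $N_\ell A_{ff}N_\ell\mathbf{w}_\ell = N_\ell A_{fc}\mathbf{e}_\ell$, so the two minimizations have identical normal equations. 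You instead expand $\|\mathbf{p}_\ell-P_{\textnormal{ideal}}\mathbf{e}_\ell\|_A^2=\|\mathbf{p}_\ell\|_A^2-2\,\mathbf{p}_\ell^TAP_{\textnormal{ideal}}\mathbf{e}_\ell+\|P_{\textnormal{ideal}}\mathbf{e}_\ell\|_A^2$ and use $AP_{\textnormal{ideal}}=\begin{bmatrix}0\\ S\end{bmatrix}$ (a fact the paper states just before the lemma as motivation, but does not use inside its proof) to show the cross term equals $\mathbf{e}_\ell^TS\mathbf{e}_\ell$ for every feasible $\mathbf{p}_\ell$, since all elements of $\mathcal{P}_\ell$ share the C-block $\mathbf{e}_\ell$ and differ only on F-points where $AP_{\textnormal{ideal}}$ vanishes. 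Hence the two objectives differ by an additive constant on $\mathcal{P}_\ell$ and the argmins coincide, with no requirement that $P_{\textnormal{ideal}}\mathbf{e}_\ell$ itself be feasible — which you correctly flag. Your argument is arguably more elementary and makes the geometric reason for the equivalence transparent (perturbations within the feasible set are $A$-orthogonal to the ideal column's residual), while the paper's variational computation has the side benefit of exhibiting the explicit constrained normal equations, which is the form actually attacked by the CG energy-minimization iteration used in the algorithm. Both are complete; the only cosmetic point is that the lemma states the argmin of $\|\mathbf{p}_\ell\|_A$ rather than its square, but monotonicity of squaring makes that identification immediate in either approach.
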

\begin{proof}
Equivalence is established by demonstrating identical weak forms for the two
minimization problems in~\eqref{eq:mins}. Consider $\mathbf{p}_\ell \in\mathcal{P}_\ell$ and
define $N_\ell$ to be the diagonal matrix that enforces sparsity pattern $\mathcal{N}^F$ on the
$\ell$th column of $W$, $\mathbf{w}_\ell$. That is, $\mathbf{w}_\ell = W \mathbf{e}_\ell= N_\ell W \mathbf{e}_\ell$, where the $k$th entry of $\mathbf{w}_\ell$ equals zero if $(k,\ell)\not\in\mathcal{N}^F$.
\begin{enumerate}
\item Consider minimizing the $l$th column of $P$, given by $\mathbf{p}_\ell =
\begin{bmatrix}\mathbf{w}_\ell\\ \mathbf{e}_\ell\end{bmatrix}$, in the $A$-norm. To this end, define the functional
$G(\mathbf{w}_\ell) = \Big\langle A \mathbf{p}_\ell, \mathbf{p}_\ell\Big\rangle
= \Big\langle A\begin{bmatrix}\mathbf{w}_\ell\\ \mathbf{e}_\ell\end{bmatrix},
               \begin{bmatrix}\mathbf{w}_\ell\\ \mathbf{e}_\ell\end{bmatrix}\Big\rangle$, with first variation
\begin{align*}
G'(\mathbf{w}_\ell ; \mathbf{v}) & = 2\Big\langle N_\ell A_{ff} N_\ell \mathbf{w}_\ell  - N_\ell A_{fc} \mathbf{e}_\ell, \mathbf{v}\Big\rangle
\end{align*}
for $\mathbf{v} = N_\ell \mathbf{v}$.
The weak form for of minimizing $G$ is then given by
\begin{align*}
N_\ell A_{ff} N_\ell \mathbf{w}_\ell = N_\ell A_{fc} \mathbf{e}_\ell.
\end{align*}

\item Consider minimizing the difference between the $l$th column of $P$ and $P_{\textnormal{ideal}}$ in the $A$-norm. That is, define the functional
\begin{align*}
H(\mathbf{w}_\ell) & = \Big\langle A(P - P_{\textnormal{ideal}})\mathbf{e}_\ell , (P - P_{\textnormal{ideal}})\mathbf{e}_\ell \Big\rangle \\
       & = \Big\langle A_{ff}(N_\ell \mathbf{w}_\ell - A_{ff}^{-1}A_{fc} \mathbf{e}_\ell), N_\ell \mathbf{w}_\ell - A_{ff}^{-1}A_{fc}\mathbf{e}_\ell\Big\rangle.
\end{align*}
Taking the first variation yields
\begin{align*}
H'(\mathbf{w}_\ell; \mathbf{v}) & = \Big\langle N_\ell A_{ff} N_\ell \mathbf{w}_\ell - N_\ell A_{fc} \mathbf{e}_\ell , \mathbf{v}\Big\rangle,
\end{align*}
with $\mathbf{v} = N_\ell \mathbf{v}$, resulting in the weak form
\begin{align*}
N_\ell A_{ff} N_\ell \mathbf{w}_\ell = N_\ell A_{fc} \mathbf{e}_\ell.
\end{align*}
\end{enumerate}
\end{proof}

\begin{remark}
A similar result to Lemma~\ref{th:cg} can be found in~\cite{Brannick:2007fb} in the Frobenius norm.
\end{remark}

Although $P_{\textnormal{ideal}}$ in~\eqref{eq:ideal} is motivated through (and optimal in the sense of~\eqref{eq:mu}) the WAP~\eqref{eq:wap}, and with an appropriate CF-splitting satisfies the SAP~\eqref{eq:sap}, $P_{\textnormal{ideal}}$ is not optimal in any sense with respect to the SAP\@. However, a similar derivation leads to an equivalent ``ideal interpolation'' operator with respect to the SAP, as introduced in Lemma~\ref{lem:ideal_sap}.

\begin{lemma}\label{lem:ideal_sap}
Let $A \in\mathbb{R}^{n \times n}$ be SPD and $P\in\mathbb{R}^{n\times n_c}$ take the form
$P = \begin{pmatrix}W \\ I\end{pmatrix}$, that is C-points are interpolated by injection. Then, consider
the SAP under the assumption that the pre-image of any vector $\mathbf{u}$ under $P$ is given by $\mathbf{u}_c$,
\begin{align*}
\hat\mu(P) := \max_{\mathbf{u}\neq 0} \frac{\|\mathbf{u} - P \mathbf{u}_c\|_A^2}{\|A\mathbf{u}\|^2}.
\end{align*}
Then for any smoothing scheme, $M$, $K_{MG} \leq 1 + \hat\mu(P)\frac{\|M\|}{\|A\|}$, and the optimal $P$ with respect to minimizing $\hat\mu$ is given by
\begin{align*}
\argmin_P \hat\mu(P) & = \begin{bmatrix} {(A_{ff}^2 + A_{fc} A_{cf})}^{-1}(A_{ff}A_{fc} + A_{fc}A_{cc}) \\ I\end{bmatrix},
\end{align*}
which is exactly ideal interpolation~\eqref{eq:ideal} for $A^2$.
\end{lemma}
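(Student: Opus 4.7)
The plan is to establish the two conclusions in sequence. For the multilevel bound, I note that choosing $\mathbf{w}_c = \mathbf{u}_c$ in the definition of the SAP~\eqref{eq:sap} gives $\min_{\mathbf{w}_c} \|\mathbf{u} - P\mathbf{w}_c\|_A^2 \leq \|\mathbf{u} - P\mathbf{u}_c\|_A^2 \leq \hat\mu(P)\|A\mathbf{u}\|^2$, so the SAP holds level-by-level with a constant proportional to $\hat\mu(P)$. Substituting into the multilevel estimate $K_{MG} \leq 1 + K\|M\|/\|A\|$ recalled in Section~\ref{sec:background} then produces the stated bound.

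For the identification of the minimizer I would proceed by direct variational analysis. The key is to decompose $\mathbf{u} = \mathbf{s} + P\mathbf{u}_c$, where $\mathbf{s} := \mathbf{u} - P\mathbf{u}_c$ necessarily has the block structure $\mathbf{s} = (\mathbf{s}_f;\mathbf{0})$ because $P$ injects over C-points. Under this decomposition the numerator of $\hat\mu(P)$ simplifies to $\|\mathbf{s}\|_A^2 = \langle A_{ff}\mathbf{s}_f, \mathbf{s}_f\rangle$, which depends only on $\mathbf{s}_f$, while expanding the denominator yields
\begin{equation*}
\|A\mathbf{u}\|^2 = \|A\mathbf{s}\|^2 + 2\langle P^T A^2 \mathbf{s},\mathbf{u}_c\rangle + \|AP\mathbf{u}_c\|^2.
\end{equation*}
Since the outer maximum runs independently over $\mathbf{s}_f$ and $\mathbf{u}_c$, and the numerator does not involve $\mathbf{u}_c$, completing the square in $\mathbf{u}_c$ shows that the worst-case $\mathbf{u}_c$ drives the denominator down to
\begin{equation*}
\|A\mathbf{s}\|^2 - \langle P^T A^2 \mathbf{s},(P^T A^2 P)^{-1}P^T A^2 \mathbf{s}\rangle.
\end{equation*}
The subtracted quantity is non-negative, so $\hat\mu(P)$ is minimized over $P$ exactly when it vanishes for every admissible $\mathbf{s}$, i.e., when $P^T A^2 \mathbf{s} = 0$ for all $\mathbf{s}$ of the form $(\mathbf{s}_f;\mathbf{0})$. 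Using $P^T = (W^T \; I)$ and the symmetry of $A^2$, a short computation yields $P^T A^2 \mathbf{s} = ((A^2)_{ff} W + (A^2)_{fc})^T \mathbf{s}_f$, which vanishes for all $\mathbf{s}_f$ iff $W = -(A^2)_{ff}^{-1}(A^2)_{fc}$. Expanding the blocks of $A^2$ through $(A^2)_{ff} = A_{ff}^2 + A_{fc}A_{cf}$ and $(A^2)_{fc} = A_{ff}A_{fc} + A_{fc}A_{cc}$ then produces (up to the sign convention adopted for $W$) precisely the stated expression, which is ideal interpolation~\eqref{eq:ideal} applied to the SPD matrix $A^2$.

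The main technical point to watch is the invertibility of $P^T A^2 P$ used when completing the square; this holds because injection over C-points forces the columns of $P$ to be linearly independent, so $P^T A^2 P$ inherits SPD from $A^2$. A secondary concern is confirming that the configuration singled out above is a global minimizer of $\hat\mu$ rather than merely a critical point, but this is immediate from the fact that the subtracted correction is pointwise non-negative in $\mathbf{s}_f$ and driving it to zero can only enlarge the denominator uniformly, hence can only reduce $\hat\mu(P)$.
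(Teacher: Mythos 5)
Your proposal is correct, and the identification of the minimizer takes a genuinely different route from the paper. The first half (choosing $\mathbf{w}_c=\mathbf{u}_c$ in the SAP and inserting the resulting constant into the multilevel bound $K_{MG}\leq 1+K\|M\|/\|A\|$) is exactly the paper's argument, and it inherits the same loose bookkeeping between the SAP constant $K$ and $\hat\mu(P)$ that the paper itself uses. For the argmin, however, the paper does not compute anything directly: it invokes Theorem 3.1 and Corollary 3.2 of Falgout--Vassilevski to pass from $\max_{\mathbf{u}}\|\mathbf{u}-P\mathbf{u}_c\|_A^2/\|A\mathbf{u}\|^2$ to $\max_{\mathbf{u}}\|\mathbf{u}-P\mathbf{u}_c\|^2/\|\mathbf{u}\|_{A^2}^2$, recognizes the latter as $\mu(P)$ for $A^2$, and quotes the known optimality of ideal interpolation for that measure. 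Your substitution $\mathbf{u}=\mathbf{s}+P\mathbf{u}_c$ with $\mathbf{s}=(\mathbf{s}_f;\mathbf{0})$, followed by completing the square in $\mathbf{u}_c$, proves the same optimality from scratch: it exhibits the $P$-independent lower bound $\max_{\mathbf{s}_f\neq 0}\langle A_{ff}\mathbf{s}_f,\mathbf{s}_f\rangle/\|A\mathbf{s}\|^2$ and shows it is attained by $W=-(A_{ff}^2+A_{fc}A_{cf})^{-1}(A_{ff}A_{fc}+A_{fc}A_{cc})$, so your argument is self-contained where the paper leans on an external citation, and it also recovers the minus sign that the lemma's displayed formula drops but that is needed for consistency with~\eqref{eq:ideal}. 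Two small cautions: the phrase that $\hat\mu$ is minimized ``exactly when'' the correction term vanishes is slightly stronger than what you establish (vanishing of the correction suffices for global optimality, which is all the lemma claims; uniqueness of the argmin would need a short extra argument), and the technical checks you flag---$P^TA^2P$ being SPD because the injection block gives $P$ full column rank, and strict positivity of the minimized denominator for $\mathbf{s}_f\neq 0$ since $\mathbf{s}\in\mathcal{R}(P)$ forces $\mathbf{s}=0$---are indeed the right ones and hold as you state.
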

\begin{proof}
First note from the SAP,
\begin{align*}
K_{MG} & \leq 1 + \left(\max_{\mathbf{u}\neq 0}\min_{\mathbf{w}_c} \frac{\|\mathbf{u} - P \mathbf{w}_c\|_A^2}{\|A\mathbf{u}\|^2}\right)\frac{\|M\|}{\|A\|} \\
& \leq 1 + \left(\max_{\mathbf{u}\neq 0} \frac{\|\mathbf{u} - P \mathbf{u}_c\|_A^2}{\|A\mathbf{u}\|^2}\right)\frac{\|M\|}{\|A\|}.
\end{align*}
Based on the proof of Theorem 3.1 and Corollary 3.2 in~\cite{FaVa2004}, it follows that
\begin{align*}
\argmin_P \max_{\mathbf{u}\neq 0} \frac{\|\mathbf{u} - P \mathbf{u}_c\|_A^2}{\|A\mathbf{u}\|^2} & =
	\argmin_P \max_{\mathbf{u}\neq 0} \frac{\|\mathbf{u} - P \mathbf{u}_c\|^2}{\|A\mathbf{u}\|^2} & =
	\argmin_P \max_{\mathbf{u}\neq 0} \frac{\|\mathbf{u} - P \mathbf{u}_c\|^2}{\|\mathbf{u}\|_{A^2}^2}.
\end{align*}
The final equation is $\mu(P)$~\eqref{eq:mu} as applied to $A^2$, and thus the
minimum is attained by ideal interpolation~\eqref{eq:ideal} as applied to $A^2$.
\end{proof}

\begin{remark}
An equivalent result holds for $A^*A$ and $\sqrt{A^*A}$ (as opposed
to $A^2$ and $A$), which is used in the following section on non-symmetric operators.
\end{remark}

\subsection{The non-symmetric case}\label{sec:theory:nonsymm}

The difficulties of convergence theory for non-symmetric $A$ lie in the fact that $A$ no longer
defines a norm, raising the question of an appropriate norm to measure convergence.
The spectral radius bounds asymptotic convergence factors~\cite{Notay:2010em,Wiesner:2014cy}; however,
consider the following error propagation matrix as an example:
\begin{align*}
E = \begin{pmatrix} \epsilon & k \\ 0 & \epsilon\end{pmatrix},
\end{align*}
for $\epsilon  <1$. Irrespective of $k$, $\rho(E) = \epsilon$, while
if $k=0$, then the convergence factor in any reasonable norm should be
$\epsilon$. But, if $k = 10^8$, let $\mathbf{e}_0 = {[0,1]}^T$ and the
convergence factor in the $l^2$-norm after one iteration is
\begin{align*}
\rho_1 = \frac{\|\mathbf{e}_1\|}{\|\mathbf{e}_0\|} = \|M\mathbf{e}_0\| = \sqrt{1000^2+\epsilon^2} \approx 1000.
\end{align*}
This demonstrates the downside of considering convergence for non-symmetric AMG methods
based on spectral radius, namely that iterations may diverge, and
the spectral radius bound on the convergence factor is not necessarily achieved in practice.
In the case of SPD $A$, the two-grid AMG error propagation operator, $E_{TG}$, is
$A$-symmetric and $\|E_{TG}\|_A = \rho(E_{TG})$.
In~\cite{BrMaMcRuSa2010}, energy-norm convergence is extended to non-symmetric
problems through $\sqrt{A^*A}$- and $\sqrt{AA^*}$-norms (using the square root to maintain the order of the problem).
Here, for non-singular $A$, energy minimization and \rnamg\ are related to two-grid theory in the $\sqrt{A^*A}$-norm.

Energy-minimization techniques for non-symmetric problems have also been proposed in the $A^*A$-norm
for $P$ and $AA^*$-norm for $R$, as opposed to the $A$-norm as referenced in Lemma~\ref{th:cg}
(see for example, fGMRES~\cite[(2.7)]{OlScTu2011} and CGNR~\cite[(2.34)]{OlScTu2011}), and
are the basis of forming transfer operators in \rnamg\ for non-symmetric problems.
Building on Lemmas~\ref{th:wap_sap} and~\ref{th:cg} gives the following result in Lemma~\ref{lem:nonsymm}.
Coupled with Conjecture~\ref{conj_stab}, two-grid convergence follows from~\cite{BrMaMcRuSa2010}.
Lemma~\ref{lem:nonsymm} provides a meaningful theoretical motivation
for energy minimization as applied to non-symmetric problems.
\begin{lemma}\label{lem:nonsymm}
The solution to energy-minimization in the $A^*A$- and $AA^*$-norms satisfy the non-symmetric strong
approximation property in the $\sqrt{A^*A}$- and $\sqrt{AA^*}$-norms, respectively, that is, for all $\mathbf{v}$
there exists a $\mathbf{v}_{c_1},\mathbf{v}_{c_2}$ such that
\begin{align*}
\left\|\mathbf{v} - P_{\textnormal{ideal}}^{A^*A} \mathbf{v}_{c_1}\right\|_{\sqrt{A^*A}}^2 & \leq \frac{K}{\|A^*A\|} \|\mathbf{v}\|_{A^*A}^2, \\
\left\|\mathbf{v} - P_{\textnormal{ideal}}^{AA^*} \mathbf{v}_{c_2}\right\|_{\sqrt{AA^*}}^2 & \leq \frac{K}{\|AA^*\|} \|\mathbf{v}\|_{AA^*}^2.
\end{align*}
\end{lemma}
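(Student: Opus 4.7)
The idea is to reduce the non-symmetric statement to the symmetric theory already developed by working with the SPD composite operators $A^*A$ and $AA^*$ (possible since $A$ is assumed nonsingular). The two inequalities are symmetric under the interchange $A \leftrightarrow A^*$ (equivalently $P \leftrightarrow R^T$), so I will carry out the argument only for the $\sqrt{A^*A}$-norm statement; the $\sqrt{AA^*}$ statement then follows by applying the same steps to $A^*$, with $\hat{B}$, $R^T$, and $P_{\textnormal{ideal}}^{AA^*}$ playing the roles of $B$, $P$, and $P_{\textnormal{ideal}}^{A^*A}$.

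First, I would characterize $P_{\textnormal{ideal}}^{A^*A}$ as the unconstrained column-wise minimizer in the $A^*A$-inner-product. This is Lemma~\ref{th:cg} applied with the SPD matrix $A^*A$ in place of $A$: minimizing $\|\mathbf{p}_\ell\|_{A^*A}$ over $\mathcal{P}_\ell$ is equivalent to minimizing $\|\mathbf{p}_\ell - P_{\textnormal{ideal}}^{A^*A}\mathbf{e}_\ell\|_{A^*A}$, and in the unconstrained limit the minimizer coincides with $P_{\textnormal{ideal}}^{A^*A}$ itself. Every hypothesis of Lemma~\ref{th:cg} concerns SPD structure, which $A^*A$ inherits from nonsingularity of $A$, so this step is essentially by reference.

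Next, I would invoke the remark following Lemma~\ref{lem:ideal_sap}, which asserts that the optimality statement of that lemma holds with $\sqrt{A^*A}$ playing the role of $A$ and $A^*A$ playing the role of $A^2$. Consequently, the minimizer of the SAP measure
\begin{equation*}
\hat{\mu}_{\sqrt{A^*A}}(P) \;=\; \max_{\mathbf{v}\neq 0} \frac{\|\mathbf{v} - P\mathbf{v}_c\|_{\sqrt{A^*A}}^2}{\|\sqrt{A^*A}\,\mathbf{v}\|^2}
\end{equation*}
is precisely ideal interpolation for $(\sqrt{A^*A})^2 = A^*A$, namely $P_{\textnormal{ideal}}^{A^*A}$. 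Taking $\mathbf{v}_{c_1}$ to be the restriction of $\mathbf{v}$ to C-points and using the identity $\|\sqrt{A^*A}\,\mathbf{v}\|^2 = \langle A^*A\,\mathbf{v},\mathbf{v}\rangle = \|\mathbf{v}\|_{A^*A}^2$ then yields a finite constant $K$ for which the claimed SAP bound holds.

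The main obstacle I anticipate is the bookkeeping of normalizing constants when passing between $\sqrt{A^*A}$- and $A^*A$-quantities. Because $\|\sqrt{A^*A}\| = \sqrt{\|A^*A\|}$, the natural SAP constant in~\eqref{eq:sap} applied with $\sqrt{A^*A}$ is $K/\sqrt{\|A^*A\|}$, and one must absorb a residual $\sqrt{\|A^*A\|}$ factor into the generic $K$ to recover the stated right-hand side $K\|\mathbf{v}\|_{A^*A}^2/\|A^*A\|$; this is consistent with the quadratic SAP/WAP relationship appearing in Lemma~\ref{th:wap_sap}. Once this scaling is reconciled, the lemma is a direct consequence of applying the symmetric theory of Lemmas~\ref{th:cg} and~\ref{lem:ideal_sap} to the auxiliary SPD operators $A^*A$ and $AA^*$.
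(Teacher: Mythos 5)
Your proposal is correct and follows essentially the same route as the paper: identify the limit of energy minimization with $P_{\textnormal{ideal}}$ for the SPD operators $A^*A$ and $AA^*$ via Lemma~\ref{th:cg}, then transfer the resulting approximation property to the $\sqrt{A^*A}$- and $\sqrt{AA^*}$-norms. The only cosmetic difference is that you invoke Lemma~\ref{lem:ideal_sap} (with its remark) for that last step, whereas the paper cites the WAP$(A^2)$/SAP$(A)$ equivalence of Lemma~\ref{th:wap_sap}; these encode the same square-root-norm fact, and your handling of the $\sqrt{\|A^*A\|}$ normalization is consistent with the paper's (generic-constant) treatment of $K$.
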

\begin{proof}
The proof follows immediately from the equivalence of the WAP$(A^2)$ and SAP$(A)$ (Lemma~\ref{th:wap_sap})
and the convergence of energy-minimization to $P_{\textnormal{ideal}}$, in this case $P_{\textnormal{ideal}}$ for $A^*A$ and $AA^*$ (Lemma~\ref{th:cg}).
\end{proof}
\begin{conjecture}[Stability]\label{conj_stab}
Let $A$ be nonsingular. The non-orthogonal coarse-grid correction given by transfer operators
$R = {(P_{\textnormal{ideal}}^{AA^*})}^T$ and $P = P_{\textnormal{ideal}}^{A^*A}$ is \textit{stable}, that is
\begin{align} \label{eq:stability}
\Big\| P(RAP)^{-1}RA\Big\|_{\sqrt{A^*A}} = \Big\|I - P(RAP)^{-1}RA\Big\|_{\sqrt{A^*A}} = C,  \end{align}
for some constant $C$, independent of mesh spacing.
\end{conjecture}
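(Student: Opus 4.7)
The plan is to treat the two equalities separately. The first is abstract: once one checks that $RAP$ is nonsingular (which follows from $A$ nonsingular together with $P$ and $R^T$ having full column rank), the operator $\Pi := P(RAP)^{-1}RA$ is idempotent, hence a projection with range $\sspan(P)$ and kernel $\ker(RA)$. Because $A$ is nonsingular, $B := \sqrt{A^*A}$ is SPD, so the $B$-inner product turns $\mathbb{R}^n$ into a Hilbert space. The classical Kato identity for nontrivial Hilbert-space projections then delivers $\|\Pi\|_B = \|I-\Pi\|_B$, closing the first equality without any appeal to the specific choice of $P$ and $R$.

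The substance of the conjecture is the mesh-independent bound on $C$. I would regard $\Pi$ as the oblique projection onto $\sspan(P)$ parallel to $\ker(RA)$ and use the standard identity
\[
\|\Pi\|_B \;=\; \frac{1}{\sin\theta_B\bigl(\sspan(P),\,\ker(RA)\bigr)},
\]
so that boundedness of $C$ reduces to a lower bound on the $B$-canonical angle between these two subspaces. Equivalently, by the Babuska-Aziz / Xu-Zikatanov framework, it suffices to exhibit $\gamma > 0$ independent of $h$ such that the Petrov-Galerkin discrete inf-sup condition
\[
\inf_{u_c\neq 0}\ \sup_{v_c\neq 0}\ \frac{\langle A P u_c,\, R^T v_c\rangle}{\|P u_c\|_B\, \|R^T v_c\|_{B^{-1}}}\;\geq\;\gamma
\]
holds, from which $C \leq 1/\gamma$ would follow.

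To verify this inf-sup, I would exploit the structural identities specific to ideal interpolation: $A P_{\textnormal{ideal}}^{A^*A}$ vanishes on its F-rows (as observed just before Lemma~\ref{th:cg}) and the analogous statement holds for $A^*R^T$. Combined with Lemma~\ref{lem:nonsymm}, which gives each of $P$ and $R^T$ mesh-independent accuracy in the non-symmetric SAP in the corresponding $\sqrt{\cdot}$-norm, pairing any trial $Pu_c$ with the test $R^T u_c$ should reduce the inf-sup to coercivity of a Schur-type coarse operator built from $A$, whose conditioning is then controllable through the SAP constants of Lemma~\ref{lem:nonsymm}.

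The hard part, and the reason the statement is posed as a conjecture rather than a theorem, is controlling the relative orientation of $\sspan(P_{\textnormal{ideal}}^{A^*A})$ and $\sspan(P_{\textnormal{ideal}}^{AA^*})$ as $h \to 0$. The algebraically smooth subspaces of $A$ and $A^*$ can in principle drift into near-orthogonality in the $B$-inner product, in which case the oblique projection would have unbounded $B$-norm even though each of $P$ and $R$ individually enjoys mesh-independent SAP constants. Ruling out this pathology appears to require an additional assumption relating the symmetric and skew-symmetric parts of $A$, such as a uniform bound on the field of values of $A_{ff}^{-1}A_{ff}^*$, which is natural for the convection-diffusion and upwind-transport discretizations considered in Section~\ref{sec:numerical} but does not seem to admit a clean proof in full generality.
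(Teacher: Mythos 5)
The statement you were asked to prove is posed in the paper as a \emph{conjecture}, and the paper supplies no proof of it: the remark immediately following it states that expanding the ideal operators and forming $RAP$ or the full projection does not provide a clear method to bound its norm, and the conclusions list proving it as future work. Your proposal is honest about this --- everything past the first equality is a program (reduction to a Petrov--Galerkin inf-sup constant, a lower bound on the $\sqrt{A^*A}$-angle between $\sspan(P)$ and $\ker(RA)$, a possible auxiliary hypothesis on the field of values), not a proof --- so it neither closes the conjecture nor conflicts with the paper's assessment; your identification of the mesh-independence of $C$ as the genuinely open core is exactly where the paper also locates the difficulty.

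There is, however, one concrete error in your treatment of the ``easy'' part. You claim that nonsingularity of $RAP$ ``follows from $A$ nonsingular together with $P$ and $R^T$ having full column rank.'' That implication is false in general: take $A$ the $2\times 2$ antidiagonal permutation matrix and $P=R^T=\mathbf{e}_1$; then $A$ is nonsingular and both factors have full column rank, yet $RAP=\mathbf{e}_1^T A\mathbf{e}_1=0$. Nor is there an obvious structural argument for the specific choices $P=P_{\textnormal{ideal}}^{A^*A}$, $R^T=P_{\textnormal{ideal}}^{AA^*}$; the paper's remark makes clear that ensuring a nonsingular and reasonably conditioned $RAP$ is itself (part of) the content of the conjecture, not a preliminary you can dispatch by rank counting, so your first equality should be stated as conditional on that invertibility. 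Granting it, the rest of that step is fine: $\Pi=P(RAP)^{-1}RA$ is a nontrivial idempotent, $\sqrt{A^*A}$ is SPD for nonsingular $A$, and Kato's identity gives $\|\Pi\|_{\sqrt{A^*A}}=\|I-\Pi\|_{\sqrt{A^*A}}$ independently of the particular $P$ and $R$. The subsequent inf-sup/angle reduction is a reasonable route toward the conjecture, but as you yourself note, controlling the relative orientation of the two ideal ranges as $h\to 0$ is unresolved, so the proposal should be read as a partial analysis of an open statement rather than a proof.
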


\begin{remark}
In the symmetric case, $R = P^T$ and $C = 1$, as the coarse-grid correction is an $A$-orthogonal
projection. In the non-symmetric case, the stability assumption necessary for two-grid convergence
as shown in~\cite{BrMaMcRuSa2010} is
primarily to ensure a non-singular and reasonably conditioned coarse-grid operator, $RAP$.
Conjecture~\ref{conj_stab} appears to hold in general, but expanding the ideal operators and forming $RAP$ or
the full projection~\eqref{eq:stability} does not provide a clear method to bound its norm.
However, in practice the root-node approach over traditional aggregation offers greater stability
of the non-orthogonal projection through enforcing the identity over C-points in transfer operators.
Specifically, when forming the coarse grid, the identity block in $R$ and $P$ help ensure the
non-singularity of $RAP$.
\end{remark}

\section{Numerical results}\label{sec:numerical}

In the following, the open source software package PyAMG~\cite{BeOlSc2008}
is used to implement \rnamg\footnote{See \code{root\_node\_solver}} and \saamg\footnote{See \code{smoothed\_aggregation\_solver}}.
In each routine, cumulative estimates of the SC and CC (see Section~\ref{sec:cost_overview})
are tracked in WUs.
All costs in the setup phase are included, such as computing the triple matrix product $R A P$.
In addition, the CC estimates include pre- and post-smoothing, computing and
restricting the residual, and coarse-grid correction, but the coarsest-grid direct solve is not included.
Measured CC multiplied by the time to perform one SpMV is highly accurate when compared with
the wall-clock time of the solve phase. Although measured SC does not track as closely with wall-clock time
(likely due to a more complicated setting with memory allocation, conditionals, etc), they still provide a good
estimate of setup cost, and differ from wall-clock times by a small constant. 

As with many AMG methods, \rnamg\ has several parameters.
The optimal SOC measure is problem dependent, although the evolution measure
tends to be more robust for problems with strong anisotropy. Still, the
evolution measure does result in higher setup costs, as observed in
Section~\ref{sec:numerical:3DAni}. Another parameter is that of
the degree $d$ of sparsity pattern for $P$ (see
line~\ref{line:sparsity} in Algorithm~\ref{alg:rn_setup}). Generally, more
difficult problems such as anisotropic diffusion require $d$ to be as large as 4--5.
The number of smoothing iterations applied to $P$ is generally set to $\lceil
1.5d\rceil$, as a larger sparsity pattern requires more iterations to minimize
the energy of the columns.  For SPD problems, the CG-based variant of
energy minimization is the least costly, while GMRES is used for non-symmetric
problems (GMRES does not offer improved convergence for SPD problems; see Remark~\ref{rem:cg}).
Filtering $P$ also requires user-level decisions
(lines~\ref{line:prefilter}~and~\ref{line:postfilter} in Algorithm~\ref{alg:rn_setup}).
Based on experimentation, a drop tolerance of $\theta$ is most effective (in contrast to
a fixed number of elements per row $k$) and is used in the following tests.
Optimal values depend on the connectivity of matrix and are beyond the scope of the numerical
tests presented, however $\theta\in[0.05,0.25]$ is used in most scenarios.

In each setup phase below, a maximum coarse-grid size of 20 is used, and
candidate vectors are improved (line~\ref{Bimproveline},
Algorithm~\ref{alg:rn_setup}) with four sweeps of relaxation.  For cycling, each
test uses an accelerated V-cycle to iterate to a relative residual tolerance of
$10^{-8}$.  Details of the specific relaxation scheme, acceleration type, and
other details are specified on a problem-by-problem basis below.

\subsection{3D-diffusion}\label{sec:numerical:3DAni}

\subsubsection{Filtering and complexity}

The first example demonstrates the effectiveness of the proposed filtering
strategy in reducing the cycle and setup complexity of a RN solver. Although
filtering is a key component of \rnamg\ on nearly all problems, it is especially
applicable in 3D, where there is high connectivity between nodes, resulting
in relatively dense operators. Consider the anisotropic diffusion problem
\begin{align}\label{eq:3Dani}
  u_{xx} + u_{yy} + 0.001 u_{zz} = f.
\end{align}
Linear finite elements are used to discretize~\eqref{eq:3Dani} on an
unstructured tetrahedral mesh of the unit cube, with homogeneous, Dirichlet
boundaries, yielding a matrix with approximately 2.65M DOFs. While the anisotropy is aligned with the coordinate axis, the
unstructured mesh yields a variety of
non-grid-aligned anisotropies, known to be more difficult for AMG than the
grid-aligned case (see Section~\ref{sec:numerical:TAD}). Table~\ref{tab:3DAni}
shows complexities and average convergence factors ($\rho$) for solving~\eqref{eq:3Dani}
using various combinations of pre- and post-filtering on $P$.
A V$(1,1)$-cycle with
symmetric Gauss-Seidel relaxation and CG acceleration is used. The evolution SOC
measure is used, with a drop tolerance of 4.0,
and CG energy minimization with $d=4$.
{\renewcommand{\tabcolsep}{4pt}
\begin{table}[!ht]
  \centering
  \begin{tabular}{c S SS SS SS}    \toprule
Pre-filter $\theta$  & {--}   & {$0.1$}& {$0.2$} & {--}    & {--}    & {$0.1$} & {$0.2$} \\
Post-filter $\theta$ & {--}   & {--}   & {--}    & {$0.1$} & {$0.2$} & {$0.1$} & {$0.2$} \\
     \midrule
     SC              & 1157.9 & 161.7  & 123.2   & 581.5   & 432.9   & 156.2   & 129.8 \\
     OC              &  3.0   & 1.4    & 1.3     & 1.8     & 1.5     & 1.4     & 1.3 \\
     CC              & 18.9   & 8.0    & 7.1     & 10.1    & 8.0     & 7.7     & 6.9 \\
     \midrule
     $\rho$              & {0.52} & {0.60} & {0.62}  & {0.51}  & {0.53}  & {0.59}  & {0.61} \\
  \bottomrule
  \end{tabular}
  \caption{Impact of filtering for the 3D-anisotropic diffusion problem.}\label{tab:3DAni}
\end{table}
}

Table~\ref{tab:3DAni} shows up to an order-of-magnitude reduction in SC
and more than a 50\% reduction in CC by using filtering.  While
filtering does not guarantee a reduction in complexity, significant savings
are often observed with only a marginal impact on convergence.
In some situations, filtering has been observed to not only reduce cost,
but also improve convergence (see Section~\ref{sec:transport}).

The SC in Table~\ref{tab:3DAni} is broken down into four main
categories in Table~\ref{tab:sc}. ``Aggregation'' is the cost of
computing the strength matrix $S$ and forming aggregates $\mathcal{A}$, most of
which is due to using the evolution measure.
``Candidates'' refers to relaxing candidate set $B$
and restricting $B$ to a coarse level.
Column ``$P$'' refers to the cost of forming the tentative
interpolation operator and applying energy-minimization smoothing iterations to construct $P$, while
column ``$RAP$'' represents a measure of the triple-matrix product.
Each column gives the total cost for the given processes over all levels, measured in WUs.
{\renewcommand{\tabcolsep}{4pt}
\begin{table}[!ht]
  \centering
  \begin{tabular}{cc SSSS S}    \toprule
Pre-filter $\theta$    & Post-filter $\theta$    & {Aggregation} & {Candidates} & {$P$} & {$RAP$}   & {Total SC} \\
\midrule
{--} & {--} & 478.7 & 24.1 & 469.7 & 271.1 & 1243.6 \\
0.2  & {--} & 48.5  & 10.4 & 55.1  & 9.2   & 123.2 \\
{--} & 0.2  & 64.1  & 11.7 & 338.7 & 18.5  & 432.9 \\
0.2  & 0.2  & 46.3  & 10.1 & 65.4  & 7.9   & 129.8 \\
  \bottomrule
  \end{tabular}
  \caption{Break down of setup cost in WUs for 3D-anisotropic diffusion.}\label{tab:sc}
\end{table}
}

Filtering $P$ has a direct impact on all setup components
that use $R$ and $P$ matrix operations (see ``$RAP$'' and ``Candidates'' in Table~\ref{tab:sc}).
Consequently, this reduces the cost of restricting the residual and the coarse-grid correction
as measured in the CC\@, along with the cost of smoothing $P$,
which is the focus of pre-filtering. Table~\ref{tab:sc} also highlights the
high cost of the strength measure in cases when coarse-grid complexity is not
contained~---~i.e.\ filtering is not used, demonstrating the additional benefit of
reduced costs on all subsequent grids through a sparser coarse-grid operator, $A_c = RAP$.

\begin{remark}\label{rem:cg}
Although GMRES energy-minimization targets the SAP and CG the WAP,
when applied to SPD problems, there is not a notable difference in convergence.
For instance if GMRES is used in Table~\ref{tab:3DAni},
then the convergence rates change by no more than
0.005 and operator complexities remain essentially the same.
\end{remark}

\subsubsection{Modified evolution SOC}\label{sec:numerical:3DAni:soc}

One contribution of this paper are the detailed SC estimates.
Precise SC estimates are important when comparing AMG methods~---~e.g., components such as energy-minimization and the evolution SOC measure may add significantly to the overall setup costs.
Moreover, these estimates help identify
areas the are opportune for cost reduction.

In considering Table~\ref{tab:sc}, one possible area for cost reduction is the
aggregation phase, where the cost of the SOC computation dominates.
Traditional SOC measures require only a few work units, usually 2 or 3 times the
operator complexity.  However, Table~\ref{tab:sc} shows that evolution-based SOC is a substantial part of the setup phase, which is attributed to the global spectral radius estimate used in weighted Jacobi~\cite{OlScTu2009}.
This estimate is calculated with an Arnoldi/Lanczos
process and costs roughly 15 matrix-vector multiplies on each level.  However,
alternative methods~\cite{BaFaKoYa2011} use $\ell_1$-Jacobi relaxation
to provide an inexpensive local row-wise weight.
This alternative is explored here for the evolution SOC measure.

Table~\ref{tab:sc_cheap} depicts detailed SC results for using the modified
$\ell_1$-Jacobi evolution measure (cf. Table~\ref{tab:sc}). This change
results in similar operator and cycle complexities
and nearly identical convergence rates as the original evolution measure.
When comparing Tables~\ref{tab:sc}~and~\ref{tab:sc_cheap}, it is apparent that
the ``Aggregation'' phase has been significantly reduced in cost.  Additionally, when
examining the most efficient solvers, where pre-filtering uses
$\theta = 0.2$, the overall savings in the setup complexity are roughly 20\%.
{\renewcommand{\tabcolsep}{4pt}
\begin{table}[!ht]
  \centering
  \begin{tabular}{cc SSSS S}    \toprule
Pre-filter $\theta$    & Post-filter $\theta$    & {Aggregation} & {Candidates} & {$P$} & {$RAP$}   & {Total SC} \\
\midrule
{--} & {--} & 449.1 & 24.4 & 473.0 & 249.5 & 1195.9 \\
0.2  & {--} & 25.5  & 10.4 & 53.6  & 7.6   & 97.1  \\
{--} & 0.2  & 43.2  & 11.7 & 326.7 & 16.9  & 398.6 \\
0.2  & 0.2  & 22.7  & 10.2 & 63.3  & 6.9   & 103.1 \\
  \bottomrule
  \end{tabular}
  \caption{Break down of setup cost in WUs for 3D-anisotropic diffusion.}\label{tab:sc_cheap}
\end{table}
}

\subsection{Totally anisotropic diffusion}\label{sec:numerical:TAD}

Diffusion-like operators are prototypical AMG problems, as they are elliptic and
SPD\@. However, with strong anisotropy, these problems still pose a challenge to
multilevel solvers. A 3D example is used in Section~\ref{sec:numerical:3DAni} to demonstrate
filtering. In this example, consider a 2D rotated anisotropic diffusion problem on the domain $\Omega$ of the form
\begin{align*}
-\nabla\cdot Q^T D Q\nabla u & = f\quad\textnormal{for ${\Omega} = [0,1]^2$},\\
                           u & = 0\quad\textnormal{on $\partial\Omega$},
\end{align*}
where
\begin{equation*}
Q = \begin{bmatrix} \cos{\psi} &          - \sin{\psi} \\
                    \sin{\psi} & \phantom{-}\cos{\psi}
    \end{bmatrix},
\qquad
D = \begin{bmatrix}1 & 0 \\ 0 & \epsilon \end{bmatrix}.
\end{equation*}
Here, $\epsilon$ represents anisotropy and $\psi$ the angle of rotation from the
coordinate axis, which both contribute to the difficulty of this
problem~\cite{OlScTu2011,OlScTu2009,Gee:2009dy,DAmbra:2013iwa,BrBrKaLi2015,Notay:2010um,Brandt:2011tu,Brannick:2012bl,BrBrMaMaMcRu2006,DeFaNoYa_2008,MaBrVa1999,Sc2012,Chen:2015vx,Brannick:2010hz, BrBrKaLi2015}.
To see this, consider the non-rotated case of $\psi=0$, which has a spectrum of the form
\begin{align*}
-\nabla\cdot D\nabla u_{jk} & =  \lambda_{jk}u_{jk}, \hspace{3ex}\textnormal{where} \\
 u_{jk} & =  \sin(j\pi x)\sin(k\pi y), \\
 \lambda_{jk} & = \pi^2(j^2 + \epsilon k^2),
\end{align*}
for $j,k\in\mathbb{Z}^+$.
If $\epsilon = 1$, then the lowest energy mode is the lowest Fourier mode:
$u_{11} = \sin(\pi x)\sin(\pi y)$, which is locally representative of all low-energy modes.
However, if $\epsilon\approx 0$, for small $j$ there are high-frequency
eigenfunctions in the $y$-direction $(k \gg 0)$ with relatively small
eigenvalues, that are no longer represented locally by the lowest Fourier mode. As a result, relaxation schemes
are unable to capture these modes, while coarse-grid correction
is not equipped to handle such \textit{hidden} high frequency error.

For angles $\psi$ aligned with the mesh, line relaxation or semi-coarsening
along the direction of anisotropy can be used~\cite{Schaffer:1998ui}. However, for
strong anisotropies, $\epsilon\approx 0$, with angles that are not aligned with
the mesh, efficient and effective multigrid solvers remain elusive.  This
section considers a finite element discretization of strongly and \textit{totally}
anisotropic diffusion, $\epsilon = 0.001$ and $\epsilon = 0$, respectively, on a unit square
with Dirichlet boundary conditions. Totally anisotropic diffusion is
particularly challenging as the problem is effectively reduced to a sequence of 1D
problems on a 2D domain.

\begin{figure}[ht!]
  \centering
  \begin{subfigure}[b]{0.48\textwidth}
    \includegraphics[width=\textwidth]{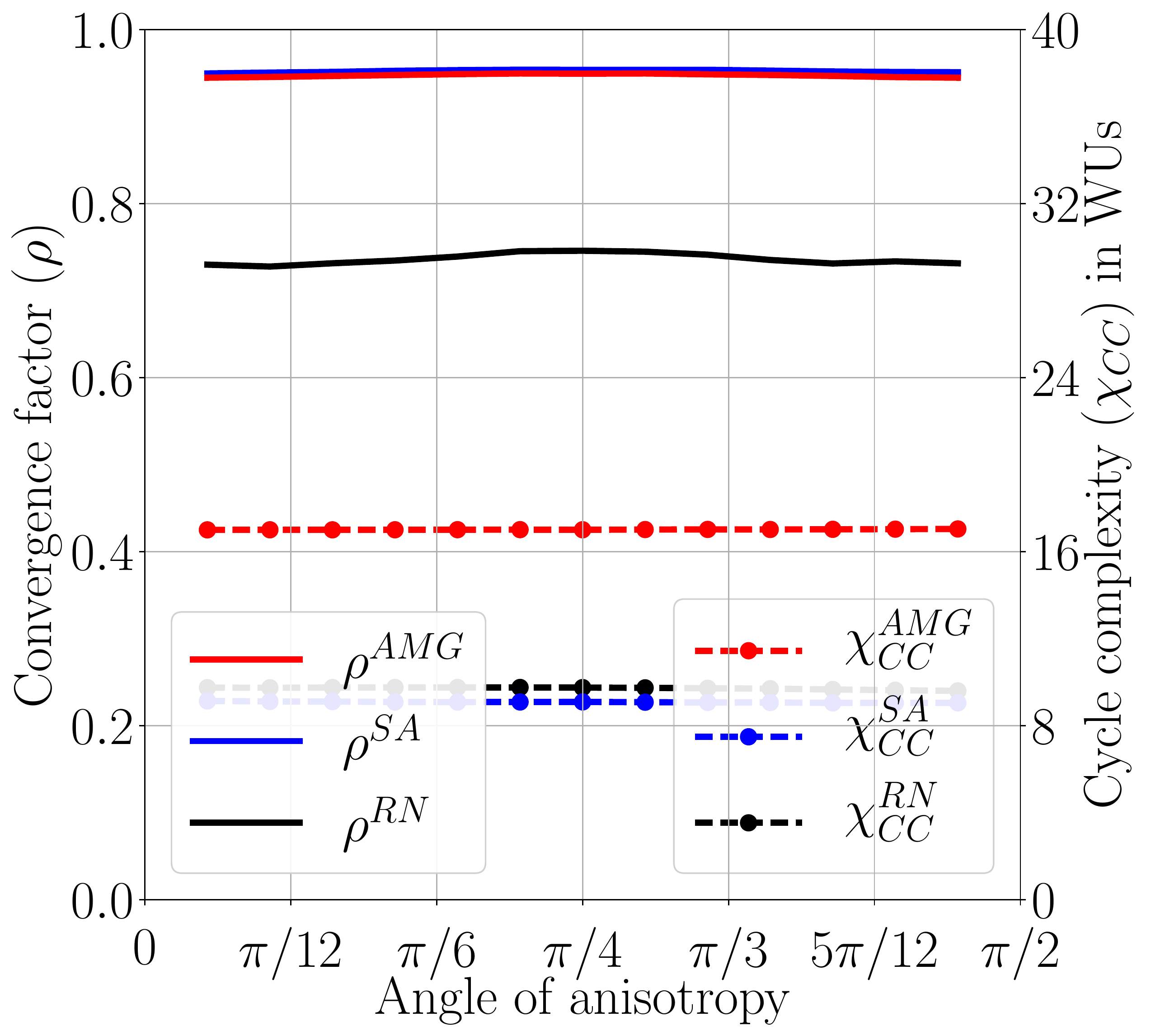}
    \caption{Unstructured mesh: $n=5\times 10^6$.}\label{fig:TAD-unstructured}
  \end{subfigure}
  \hfill
  \begin{subfigure}[b]{0.48\textwidth}
    \includegraphics[width=\textwidth]{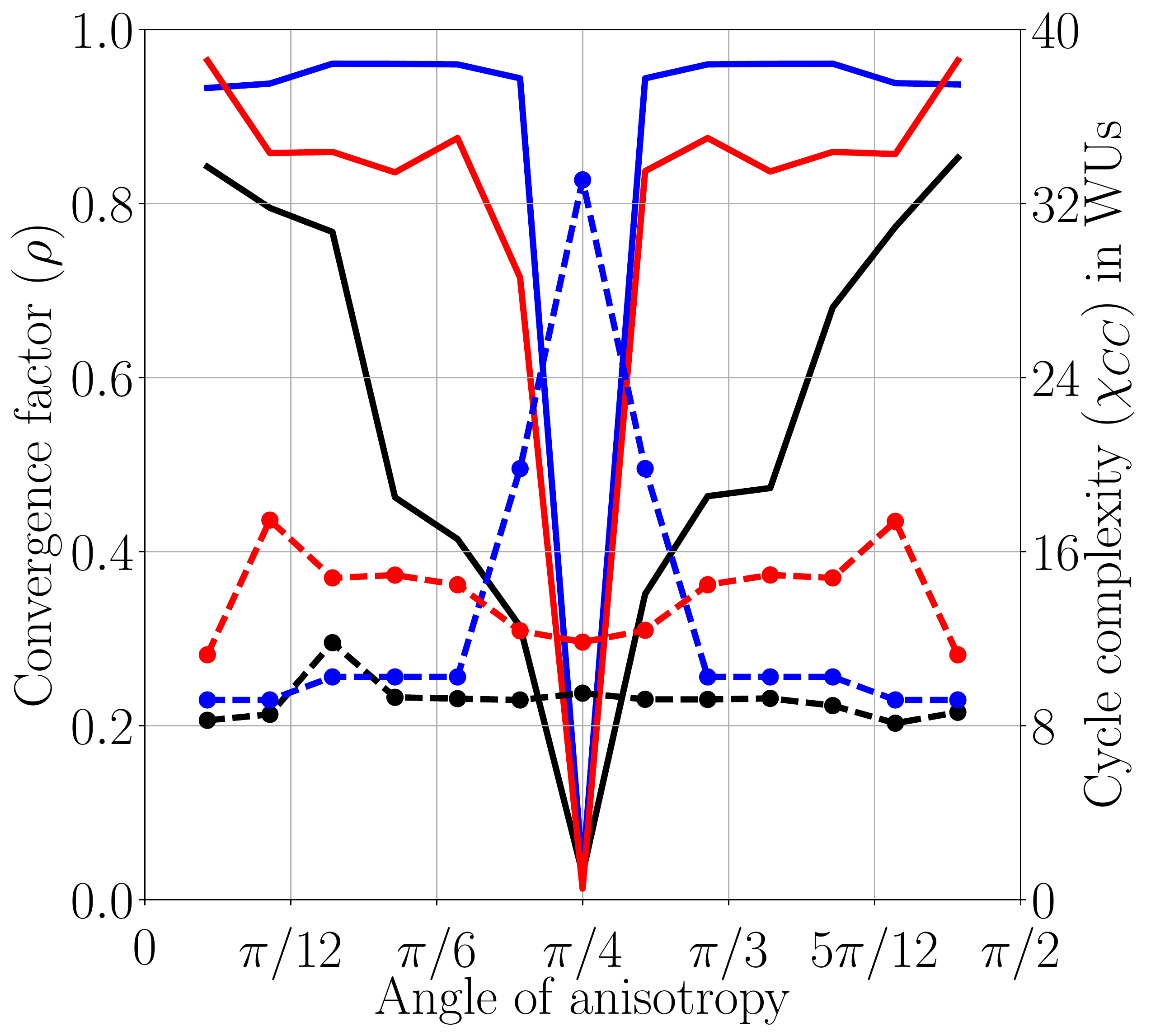}
    \caption{Structured mesh: $n=8\times 10^6$.}\label{fig:TAD-structured}
  \end{subfigure}
  \caption{Results for totally anisotropic diffusion ($\epsilon=0$), with angles in $(0,\sfrac{\pi}{2})$.  A structured grid of size $2000\times 2000$ and an unstructured mesh with resolution $h\approx 1/2000$ are used. Legend entries in (a) apply to (b) as well. \rnamg, shown in black, outperforms \saamg and \cfamg 
  in all cases.
    }\label{fig:TAD}
\end{figure}

Figure~\ref{fig:TAD} compares \rnamg, \cfamg, and \saamg\ applied to totally anisotropic diffusion for various angles.
All solves use a symmetric V{(1,1)}-cycle with symmetric Gauss-Seidel relaxation and CG acceleration.
Grid-aligned anisotropies are generally easier to solve than non-grid aligned, hence the excellent convergence factors
at $\theta = \sfrac{\pi}{4}$ on the structured mesh. In the case of an
unstructured mesh, all angles are effectively non-grid aligned resulting in consistent performance across angles.

Classical AMG uses a classical strength measure with drop tolerance of $0.5$, and standard CF-splitting and interpolation \cite{RuStu1987}. 
Smoothed aggregation uses a symmetric strength measure (with a drop tolerance of $0.0$~---~i.e.\ the strength matrix
is given by $A$ and normalized as in~\eqref{eq:soc}) and two iterations of weighted Jacobi interpolation smoothing \cite{VaMaBr1996}.
The root-node solver in this case uses two steps of an evolution strength measure (with a drop tolerance of $4.0$), along with six
iterations of CG energy-minimization smoothing of $P$. For the energy minimization, a degree $d=4$ sparsity pattern is used with
filtering, $\theta_{\text{pre}} = \theta_{\text{post}} =0.1$. Two and three Jacobi smoothing iterations were applied to the \saamg solver
in an attempt to mimic the expanded sparsity pattern used in \rnamg, but the convergence with respect to CC did not improve. 

Figure~\ref{fig:TAD} highlights the effectiveness of \rnamg\ over all angles with moderate operator and cycle complexities.
For time-to-solution with respect to floating point operations and wall-clock time, \rnamg achieves between 3--30$\times$ speed-up in
comparison to \saamg and \cfamg on a structured mesh and unstructured mesh, with moderate cycle complexities in all cases. It should
be noted that performance of \saamg improves when using the modified evolution SOC measure introduced in Section
\ref{sec:numerical:3DAni:soc}, but \rnamg still performs 2--3$\times$ faster with respect to time and complexity. This is a notable
achievement in performance for this problem, as anisotropic diffusion remains a significant challenge to most solvers.

\begin{figure}[!ht]
  \centering
  \begin{subfigure}[b]{0.48\textwidth}
    \includegraphics[width=\textwidth]{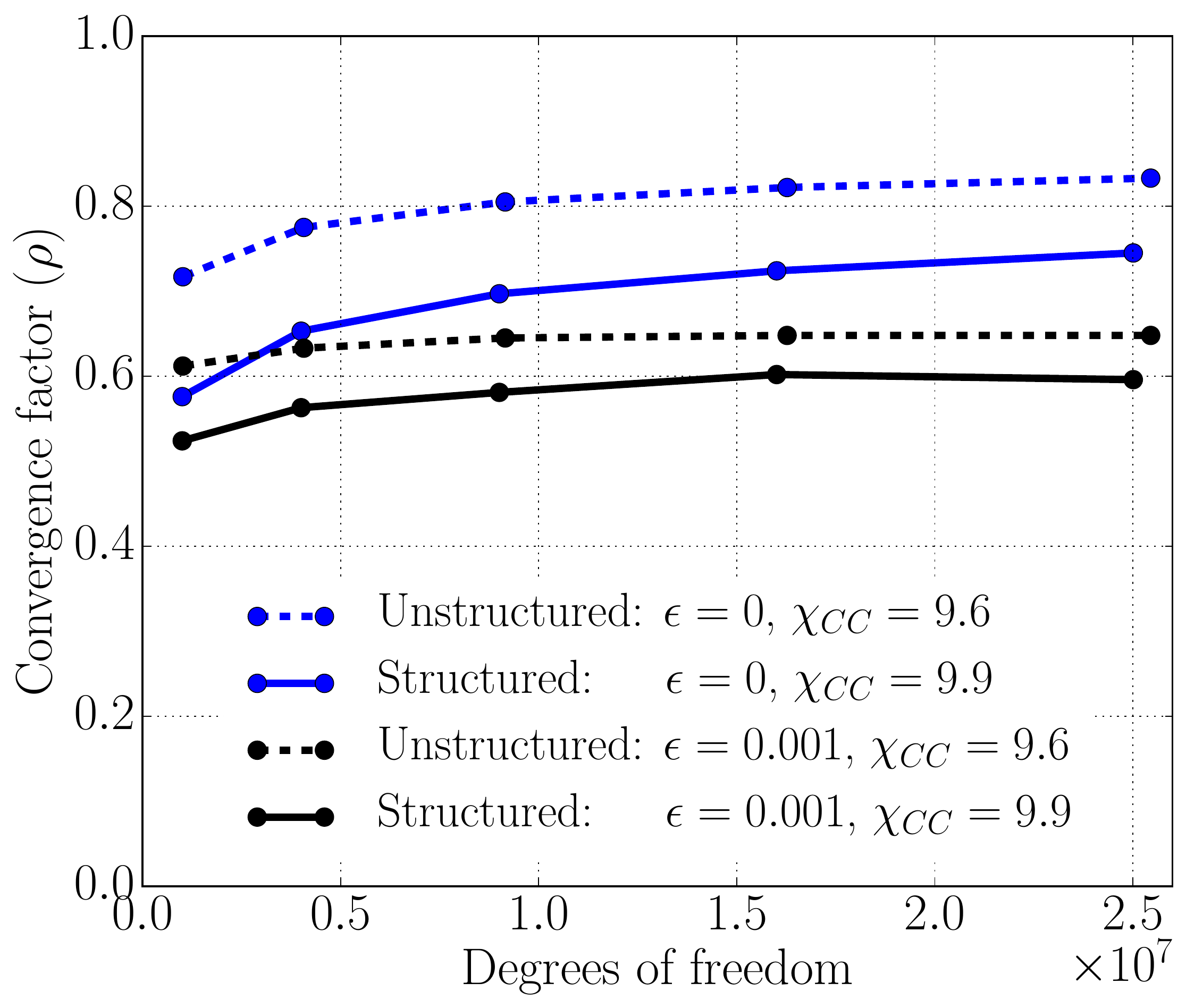}
    \caption{V$(1,1)$-cycle.}\label{fig:TAD_scale-V}
  \end{subfigure}
  \hfill
  \begin{subfigure}[b]{0.48\textwidth}
    \includegraphics[width=\textwidth]{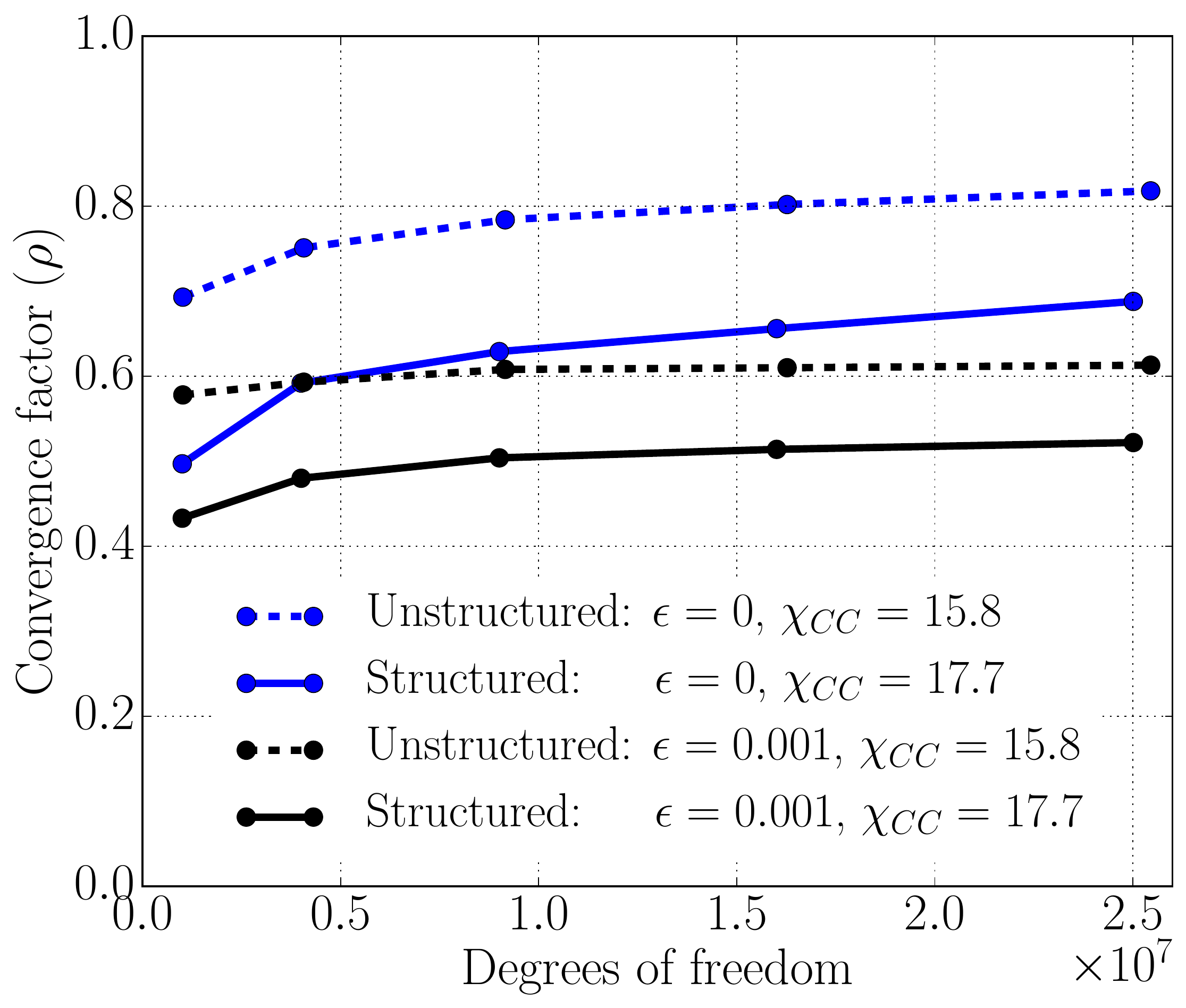}
    \caption{W$(1,1)$-cycle.}\label{fig:TAD_scale-W}
  \end{subfigure}
  \caption{Scaling results for \rnamg\ for anisotropic diffusion with $\epsilon = 0.001, \epsilon = 0$, and $\theta = 3\pi/16$. Cycle complexity, $\chi_{\textnormal{CC}}$, is shown and is constant for all problem sizes.}
\label{fig:TAD_scale}
\end{figure}

Figure~\ref{fig:TAD_scale} demonstrates the scaling of convergence factors as
problem size increases for $V(1,1)$- and $W(1,1)$-cycles. In the case of $\epsilon
= 0.001$, V-cycle convergence factors asymptote and scale perfectly, independent
of $h$, on structured and unstructured meshes, up to 25 million unknowns; the
SC and CC also scale, but are not shown. However, in the case of $\epsilon = 0$ there is
a slow growth
in the convergence factor as the problem size increases for both V- and
W-cycles.
To analyze, consider a convergence factor,
$\rho(h)$, dependent on spatial step size $h$:
\begin{align}\label{eq:asymp}
\rho = \bar{\rho} (1-a h^q),
\end{align}
where $q=1$ in the case of linear finite elements, $a$ is some constant, and
$\bar{\rho}$ is the asymptotic convergence factor, $\lim_{h\to 0}\rho =
\bar{\rho}$. A log of~\eqref{eq:asymp} and an expansion yields $\log(1-ah) = -ah +
O(h^2)$ or $-\log(\rho) = -\log(c) + ah$. A linear fit on the three smallest
step sizes in Figure~\ref{fig:TAD_scale} for $\epsilon = 0$ leads to the following asymptotic
convergence factors
\begin{align*}
\textnormal{structured:} \quad   \rho \rightarrow 0.82, \quad
\textnormal{and unstructured:} \quad \rho \rightarrow 0.88.
\end{align*}
Although an asymptotic convergence factor of approximately $0.88$ is relatively slow,
\textit{scalable} convergence of totally anisotropic diffusion on unstructured meshes has not been
achieved by other AMG methods.

In practice, using W-cycles over V-cycles should increase the accuracy
of the coarse-grid correction. However, Figure~\ref{fig:TAD_scale} reveals W-cycles
offer only minor improvements. This indicates that the algebraically smooth error is not well represented
by the coarse grid, and that improved strength measures and coarsening routines~\cite{OlScTu2009,DAmbra:2013iwa,
BrBrKaLi2015,Notay:2010um,Brannick:2012bl,BrBrMaMaMcRu2006} may
improve the coarse grid, thereby improving convergence for this problem.

\subsection{Recirculating flow (non-symmetric)}\label{sec:numerical:flow}

One of the benefits of the root-node approach is the ability to handle a variety of
problems, including non-symmetric problems and systems, without redesign of
methodology and implementation. In this section, a standard recirculating
flow example known as the \emph{double-glazing problem}
is used~\cite{ElSiWa2014}, which models temperature distribution
over a domain when an external wall is hot. The governing PDE is given by
\begin{equation}\label{eq:flow}
-\epsilon \nabla \cdot \nabla u + \mathbf{b}(\textbf{x}) \cdot \nabla u = f,
\end{equation}
where $\epsilon = 0.005$ and wind is given by $\mathbf{b}(\textbf{x}) = [2x_1(1-x_0^2), -2x_0(1-x_1^2)]$.
Dirichlet boundaries are imposed on the domain $[0,1] \times [0,1]$, where
$u=0$ on the north, south and west sides of the domain, and $u = 1$
on the east, leading to boundary layers near corners with discontinuities.

A standard Galerkin finite element method (GFEM) based on a regular triangular mesh
is used, resulting in a non-symmetric discrete linear system.
Multigrid theory for non-symmetric problems is less
developed in comparison to the symmetric case; however,
\saamg\ has been extended to
non-symmetric problems~\cite{Sala:2008cv} and is used in this section (see Algorithm~\ref{alg:rn_setup}).

Each example uses a V$(1,1)$-cycle of weighted Jacobi with GMRES acceleration.  While
Jacobi relaxation is not guaranteed to converge for a non-symmetric problem, it remains around half
the cost of using relaxation on the normal equations.
In Section~\ref{sec:transport} an upwind discretization is considered which
requires relaxation on the normal equations for effective convergence. For \saamg,
a classical strength measure (with drop tolerance $0.25$) is used along with one and three steps of Jacobi smoothing
steps applied to $P$, labeled $\textnormal{SA}_1$ and $\textnormal{SA}_3$, respectively. The
symmetric traditional SA strength measure is not used due to the non-symmetry of the problem. Two steps of the evolution measure
(with drop tolerance $ 3.0$) is used for \rnamg\ along with two iterations of GMRES energy minimization
for $P$ with $d=1$ (labeled $\textnormal{RN}_1$), and five iterations of GMRES
energy minimization for $P$ with $d=3$ (labeled $\textnormal{RN}_3$).

Table~\ref{tab:recircgfem} demonstrates optimal results are achieved
for this example with no filtering and a small sparsity pattern ($d=1$).
This agrees with practical experience: generally it is effective to
increase the filtering tolerance as the degree of the sparsity pattern for $P$ increases,
or as the connectivity of matrix $A$ increases. This is observed in the
3D-anisotropic diffusion problem, where high connectivity and a $d=4$ sparsity pattern allows for a large $\theta = 0.2$.
If the sparsity pattern increases in distance
from the root-node, or the matrix is highly connected, then it is likely there are
entries that are not critical to performance and are candidates for removal.
{\renewcommand{\tabcolsep}{4.5pt}
\begin{table}[!ht]
  \centering
  \begin{tabular}{c c | cccc | cccc | cccc }   \toprule
 & & \multicolumn{4}{c |}{$2000\times2000$} & \multicolumn{4}{c |}{$3000\times3000$} & \multicolumn{4}{c }{$4000\times4000$} \\
$\theta$ & $d$   & SC & OC & CC & $\rho$ & SC & OC & CC & $\rho$ & SC & OC & CC & $\rho$   \\ \midrule
\multirow{2}{*}{--} & $\textnormal{SA}_1$ & 72 & 1.4 & 5.2 & 0.74 & 71 & 1.4 & 5.2 & 0.82 & 71 & 1.4 & 5.2 & 0.88 \\
		& $\textnormal{SA}_3$ & 229 & 2.3 & 11.2 & 0.96 & 230 & 2.3 & 11.2 & 0.96 & 227 & 2.3 & 11.2 & 0.93 \\ \midrule
\multirow{2}{*}{--}& $\textnormal{RN}_1$ & 98 & 1.4 & 5.1 & 0.46 & 96 & 1.4 & 5.0 & 0.50 & 95 & 1.4 & 4.9 & 0.45 \\
		& $\textnormal{RN}_3$ & 403 & 2.5 & 9.7 & 0.68 & 407 & 2.4 & 9.4 & 0.63 & 405 & 2.3 & 9.2 & 0.76 \\\midrule
\multirow{2}{*}{0.1} & $\textnormal{RN}_1$ & 126 & 1.4 & 5.1 & 0.52 & 125 & 1.4 & 5.0 & 0.56 & 124 & 1.4 & 4.9 & 0.56 \\
		& $\textnormal{RN}_3$ & 284 & 1.8 & 6.7 & 0.53 & 287 & 1.8 & 6.7 & 0.52 & 285 & 1.8 & 6.6 & 0.63 \\ \bottomrule
  \end{tabular}
\caption{Non-symmetric \saamg\ and \rnamg\ for the recirculating flow problem.}\label{tab:recircgfem}
\end{table}
}

Classical CF AMG is not designed for non-symmetric problems, making RN AMG
the clear choice for a problem such as this. Root-node AMG achieves more than
a $6\times$ speed-up over SA AMG for the largest problem size considered,
\textit{with a lower CC}, and only slightly larger SC\@. Furthermore, RN AMG convergence factors
with degree $d=1$ appear to have reached an asymptote, while SA AMG is still demonstrating
a steady increase with problem size.

\begin{remark}
With the recirculating flow, as the
grid-size ($h$) approaches zero, there are two competing factors that contribute to the numerical
difficulty of the problem.
As $h\to0$, the diffusive part of the problem, $-\epsilon\nabla\cdot\nabla$,
becomes increasingly dominant, as the discretization scales like $\frac{1}{h^2}$, while
$\mathbf{b}(\mathbf{x})\cdot \nabla$ scales like $\frac{1}{h}$. The resulting linear system
is more symmetric and diffusion-like, which is preferable for AMG\@.
However, as $h\to 0$, convergence factors often increase to an asymptotic value
(see~\eqref{eq:asymp} and Figures~\ref{fig:TAD_scale-V}~and~\ref{fig:TAD_scale-W}), due to smaller eigenvalues and an
increasing number of levels in the AMG hierarchy. Together, these factors correspond to the so-called \textit{half-grid Reynolds number} or \textit{cell Reynolds number}, $R_h = \frac{|\mathbf{b}|h}{2\epsilon}$,
where convergence factors are expected to be consistent for a fixed $R_h$, and degrade for
$R_h \gg 1$. Figure~\ref{fig:halfgrid} demonstrates this phenomenon.
Convergence factors asymptote for each half-grid Reynolds number
as $h\to 0$. In this case, convergence degrades by a factor of 10 when increasing from $R_h=1.25$ to $R_h = 2.5$; thus it is faster to solve a refined problem with $R_h = 1.25$ (and several times as many
DOFs), rather than a system with $R_h = 2.5$.
\end{remark}
\begin{figure}[!t]
\begin{center}
\includegraphics[width=0.9\textwidth]{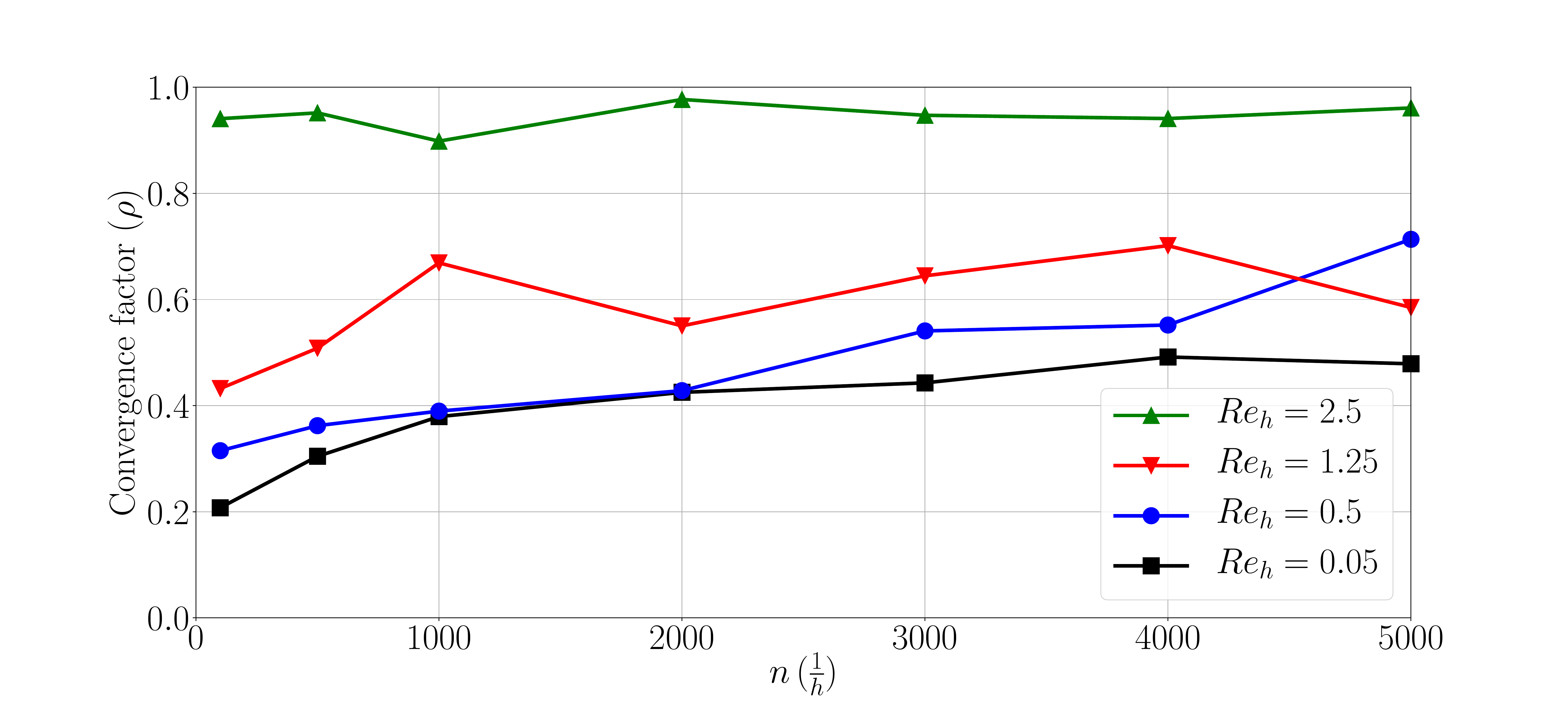}
\caption{\rnamg\ convergence factor as a function of $h$, for fixed half-grid Reynolds numbers,
$R_h \in\{0.05, 0.5,1.25, 2.5\}$.}
\label{fig:halfgrid}
\end{center}
\end{figure}

Based on the scalability of \rnamg\ performance for fixed $R_h$, with the appropriate grid-size,
most convection-diffusion problems as in~\eqref{eq:flow} should exhibit scalability in convergence.
Of course, this relies on the diffusion operator dominating the discrete system. As a limiting case,
\rnamg\ is applied to the steady-state transport equation in the following section.

\subsection{Upwind transport (non-symmetric)}\label{sec:transport}

In this section, a highly non-symmetric upwind discretization of the steady-state
transport equation is considered. Define the domain as $\Omega = (0,1)\times(0,1)$,
with constrained inflow boundaries $\Gamma_{\textnormal{in}} = \{ (x,y) \,:\, x = 0 \text{ or } y = 0\}$,
and free outflow boundaries $\Gamma_{\textnormal{out}} = \{ (x,y) \,:\, x = 1 \text{ or } y = 1\}$.
The steady-state transport equation is then given as
\begin{align}\label{eq:transport}
\begin{split}
\mathbf{b}(x,y) \cdot\nabla u + c(x,y)u & = f(x,y) \hspace{3ex}\Omega, \\
u & = g(x,y) \hspace{3ex}\Gamma_{\textnormal{in}}.
\end{split}
\end{align}
Root-node AMG performance is demonstrated for variations in $\mathbf{b}(x,y)$ and
$c(x,y)$, using an upwind lumped bilinear discontinuous (LBLD) finite element
discretization~\cite{Morel:2007}. A discontinuous discretization is used to
allow for discontinuities in material coefficient $c(x,y)$, and to account for the
fronts that develop in hyperbolic-type PDEs.
The transport problem,~\eqref{eq:transport}, results
in a highly non-symmetric (nearly lower-triangular in the proper ordering)
matrix $A$, for which traditional relaxation schemes such as Jacobi and
Gauss-Seidel diverge, even for small problems. Thus two sweeps of Gauss-Seidel
on the normal equations are chosen as the relaxation scheme, which contributes to a larger CC\@.

One anomalous feature of the LBLD discretization applied to~\eqref{eq:transport}
is that the best convergence rates are obtained with aggressive filtering,
$\theta_{pre} = \theta_{post} = 0.4-0.5$.  The rationale is that a large degree
sparsity pattern with aggressive filtering results in a sparsity pattern that is
long and narrow, following the direction of flow. For a hyperbolic PDE with the
solution following characteristic curves, effective sparsity
patterns for a given column of $P$ will align with the characteristic associated with the root-node.
However, in practice this requires either \textit{a
priori} knowledge of the problem to motivate aggressive filtering, or
experimentation. For most problems, filtering values of
$\theta_{pre}=\theta_{post} \in [0.05,0.15]$ are effective choices, increasing
to $\theta_{pre}=\theta_{post} \in [0.1,0.25]$ for problems with high
connectivity~---~e.g.\ 3d-diffusion as discussed in
Section~\ref{sec:numerical:3DAni}. However, the LBLD results demonstrate 
that further work is needed to algebraically determine the optimal interpolation sparsity
pattern for a given problem.

\begin{figure}[t!]
\begin{center}
\includegraphics[width=\textwidth]{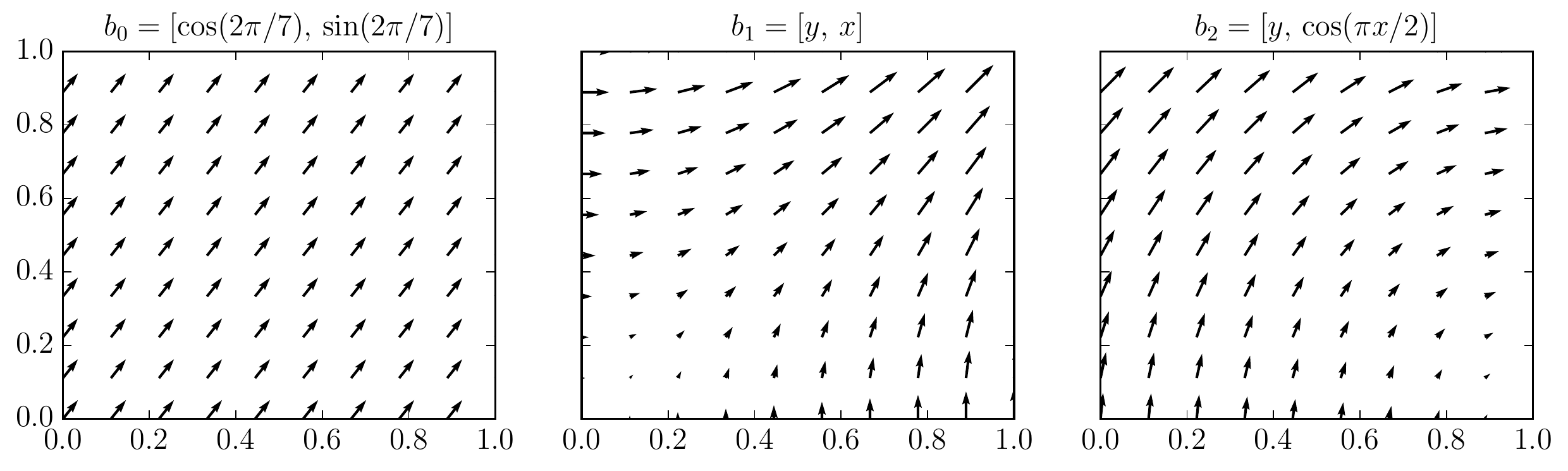}
\caption{Vector fields of flow functions, $\mathbf{b}(x,y)$ considered in~\eqref{eq:transport}, with inflow
boundaries $\Gamma_{in} =  \{ (x,y) \,:\, x = 0 \text{ or } y = 0\}$.}
\label{fig:flow}
\end{center}
\end{figure}

Root-node AMG is applied to the LBLD discretization of~\eqref{eq:transport} for three different material
coefficients $c(x,y)$, as shown in~\eqref{eq:material}, and three directional functions as shown in Figure~\ref{fig:flow}.
\begin{equation}\label{eq:material}
\begin{split}
\textnormal{Constant:} \hspace{3ex} c_1(x,y) & = 1 \\
\textnormal{Square in square (SnS):} \hspace{3ex} c_2(x,y) & = \begin{cases} 10^4 & x,y\in[0.25,0.75] \\ 10^{-4} & x,y\not\in[0.25,0.75]\end{cases} \\
\textnormal{Split:} \hspace{3ex} c_3(x,y) & = \begin{cases} 10^{-4} & x < 0.5\\ 10^{4} & x \geq 0.5 \end{cases}
\end{split}
\end{equation}
Results are shown in Table~\ref{tab:transport} for a discretization with 4,000,000 DOFs,
sufficiently large to observe asymptotic convergence factors.
A classical SOC with drop tolerance $\theta = 0.35$ is used, and GMRES energy-minimization applied
to a degree four sparsity pattern for interpolation and restriction operators in \rnamg, with filtering
$\theta_{pre}=\theta_{post} = 0.45$. The multigrid solver is used as a preconditioner
for GMRES, and problems solved to $10^{-8}$ residual tolerance. Candidate vectors are taken as the
constant and are \textit{not} improved for this problem, as convergence tended to degrade.
{\renewcommand{\tabcolsep}{6pt}

\begin{table}[!ht]
  \centering
  \begin{tabular}{c c | ccc | ccc | ccc }   \toprule
 & $\mathbf{b}(x,y)$ & \multicolumn{3}{c |}{$[\cos(\sfrac{2\pi}{7}),\sin(\sfrac{2\pi}{7})]$} & \multicolumn{3}{c |}{$[y,x]$} & \multicolumn{3}{c }{$[y,\cos(\sfrac{\pi x}{2})]$} \\
 & $c(x,y)$   & 1 & SnS & Split & 1 & SnS & Split & 1 & SnS & Split    \\ \midrule
\multirow{4}{*}{RN} & SC & 164 & 147 & 130 & 125 & 105 & 107 & 126 & 112 & 109 \\
			     & OC & 1.88 & 1.76 & 1.66 & 1.92 & 1.83 & 1.78 & 1.91 & 1.83 & 1.77 \\
			     & CC & 17.9 & 16.7 & 15.8 & 18.2 & 17.4 & 17.0 & 18.2 & 17.4 & 16.8 \\
			     & $\rho$ & 0.79 & 0.74 & 0.75 & 0.86 & 0.84 & 0.85 & 0.87 & 0.87 & 0.86 \\ \midrule			     
\end{tabular}
\caption{Non-symmetric \rnamg\ for variations on the 2d steady-state transport
equation~\eqref{eq:transport}. Matrices have $4,000,000$ DOFs and $15,986,004$ nonzeros. }
\label{tab:transport}
\end{table}
}

Although convergence factors in the case of non-constant flow are higher than desired (worst case $\rho\approx0.87$
for \rnamg), it is encouraging that AMG methods are able to solve upwind discretizations of a
hyperbolic PDE with discontinuous and non-constant coefficient functions. Convergence factors
of \saamg quickly approached one, and did not converge in 500 iterations. 

Often when considering transport-type problems, ``sweeps'' are performed, where the problem is
discretized in angle and a linear solve performed for each angle. Transport sweeps are an important
part of the DSA algorithm for models of neutral particle transport~\cite{adams2002fast}. For this reason, it is of
interest to demonstrate \rnamg's capability on the entire spectrum of angles, similar to anisotropic
diffusion as considered in Section~\ref{sec:numerical:TAD}. Figure~\ref{fig:ang_lbld} shows convergence
factors of \rnamg\ as a function of direction of flow, $\theta\in[0,\sfrac{\pi}{2}]$, as applied to the LBLD
discretization of~\eqref{eq:transport}.
\begin{figure}[!t]
\begin{center}
\includegraphics[width=0.95\textwidth]{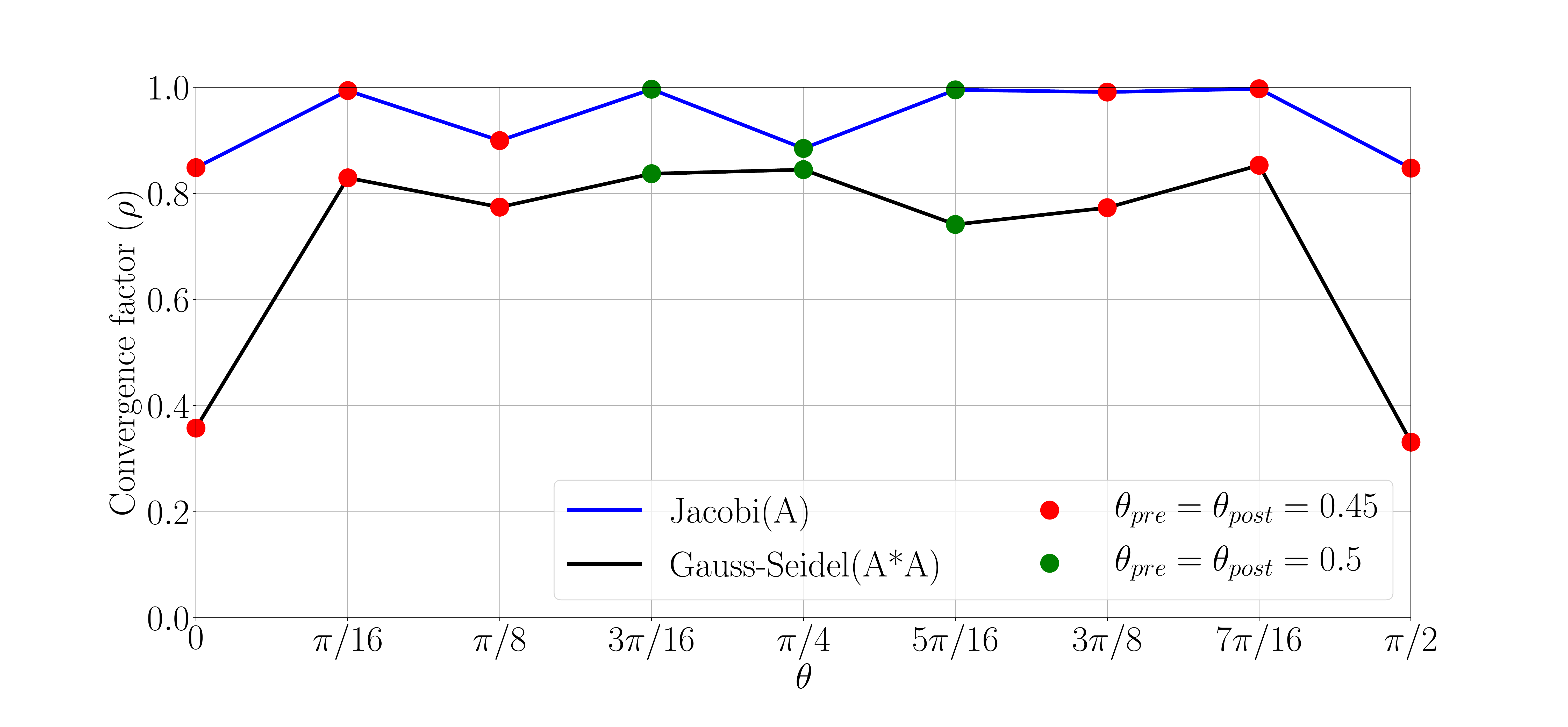}
\caption{Convergence factors for GMRES-accelerated \rnamg\ applied to the steady-state transport
equation, as a function of the direction of flow $\mathbf{b}(x,y) = (\cos(\theta),\sin(\theta))$.
Results are shown for Gauss-Seidel on the normal
equations (GSNE) relaxation as well as Jacobi, to demonstrate why normal-equation relaxation is necessary
for highly non-symmetric problems. Other solver parameters are fixed for all tests, with the exception
of filtering, which is modified slightly for improved convergence on angles close to $\sfrac{\pi}{4}$.
Convergence with GSNE is in the range $[0.75,0.85]$ for interior angles, with significant improvement
close to the boundaries.}\label{fig:ang_lbld}
\end{center}
\end{figure}

\subsection{Elasticity example (systems problem)}\label{sec:elasticity}

Algebraic multigrid for systems of PDEs is an important topic
due to the vast number of problems formulated this way.
Although there are some problem-specific extensions to CF AMG
for systems, such as elasticity~\cite{BaKoYa2010}, where rigid body modes are
incorporated into a modified method, CF AMG is not applicable to
systems in a general setting. In contrast, \saamg\ specifically targets
systems-based problems such as elasticity, and performs well in many cases.
Here, the suitability of \rnamg\ for systems is illustrated through a
comparison with \saamg\ for a standard elasticity model problem, a problem for
which \saamg\ was designed.

The test problem is isotropic linearized elasticity defined by
\begin{equation}\label{eqn:linelas}
  -\mathrm{div}\left( \lambda \,\mathrm{tr}\left( \left( \nabla \mathbf{u}
  + \nabla \mathbf{u}^T \right)/2 \right) I + \mu \left( \nabla
  \mathbf{u} + \nabla \mathbf{u}^T \right) \right) = f,
\end{equation}
where $\lambda$ and $\mu$ are the Lam\'{e} parameters, $I$ is the identity
matrix, and $\mbox{tr}()$ is the trace function.  Example two from the MFEM
package~\cite{mfem} is used to discretize (\ref{eqn:linelas}) using a regular
mesh of a beam that is eight times longer than it is wide (\texttt{data/beam-tri.mesh}).
Three material choices, of increasing difficulty for AMG, are considered:
a steel beam, with Young's modulus and Poisson ratio of $E = 180\mbox{E}9$
and $\nu = 0.30$; a rubber beam, with $E = 0.1\mbox{E}9$ and $\nu = 0.499$; and a
more difficult rubber beam, where $\nu = 0.4999$.  In general, the closer $\nu$ is to
0.5, the more difficult the system is for AMG to solve.
The east side of the beam is fixed to a wall with $\mathbf{u} =
\mathbf{0}$ Dirichlet boundary conditions. Neumann conditions are applied on other boundaries,
with $\mathbf{u} \cdot \mathbf{n} = f$, where $f = 0$ for the north and
south sides, and $f = -0.01$ on the west side, representing a downward force.
Standard linear triangular finite elements are used to produce an SPD
system with two variables at each spatial location and the corresponding block
sparse row matrix with $2 \times 2$ blocks.

Each example uses a V$(1,1)$-cycle of symmetric block Gauss-Seidel with CG
acceleration. Both \rnamg\ and \saamg\ use the classical strength measure with a
drop tolerance of $0.5$.  Root-node AMG does not use the evolution measure here
because the classic measure performed well and lowered the setup complexity.\footnote{If the three elasticity test cases are re-run for $\textnormal{RN}_4$ using the
evolution strength measure, then the convergence rate degrades by about 0.01--0.02 and
the operator complexities remain similar,
i.e., \rnamg\ using the evolution measure converges similarly to SA using the
classic measure. While SOC for systems is an active topic of AMG research and not well-understood,
one possible reason for this small difference between the two measures is that
the ``anisotropy'' in the beam is grid-aligned, and the classical measure is known
to handle grid-aligned anisotropies well.}
The terms $\textnormal{SA}_{k}$
and $\textnormal{RN}_{k}$ again refer to the degree of interpolation smoothing.
For $\textnormal{RN}_{4}$, 6 iterations of CG energy minimization are used, while for
$\textnormal{RN}_{1}$, 2 iterations are used.  Experimentation indicates that only
pre-filtering is needed, so $\theta$ values only refer to pre-filtering.  The candidate
vectors $B_0$ are the three rigid-body-modes in 2D, two translations and one rotation.

Table~\ref{tab:elas} gives the numerical results for three problem sizes,
from 256K to 4M DOFs.  Various filtering
values produce qualitatively similar results, so a single result is reported.
All methods converge well for the steel beam,  with the 0.5 convergence rate for
$\textnormal{RN}_{1}$ being offset by the low operator complexity of 1.2. The rubber
beams are considerably harder, with the most difficult beam problem yielding convergence
rates well above 0.9.

Comparing the methods with each other, the results reveal that
$\textnormal{RN}_{1}$ converges slowly in comparison to $\textnormal{SA}_{1}$
or $\textnormal{RN}_{4}$.  This is because the three candidate vectors provided
as interpolation constraints require a richer sparsity pattern than $d=1$ in
order to satisfy the constraints.

Another important point is that \rnamg\ with $d=4$ does achieve a better convergence rate
and smaller operator complexity than \saamg\, but at the cost of a larger SC\@.
Taking the largest problem size for test 2, with
4M DOFs (a grid size $4096\times512$), the difference in convergence
rate yields 87 iterations for $\textnormal{RN}_{4}$ versus 107 iterations for
$\textnormal{SA}_{1}$.  However even with this faster convergence, \saamg\ uses
the fewest overall work units.\footnote{It is possible, given the faster creep
in convergence rate suffered by \saamg, that \rnamg\ could outperform
overall as the problem size increases.} \saamg\ similarly provides slightly better
overall work units for setup and solve with test 3. In conclusion, this section shows
that \rnamg\ is a viable approach to solving systems, and is competitive with
\saamg\ for the considered elasticity problem, for which SA AMG was originally designed.
Applying \rnamg\ to systems is
still an active research topic, focusing on issues such as reducing the setup
cost and considering more complicated examples.
{\renewcommand{\tabcolsep}{4.3pt}
\begin{table}[!ht]
  \centering
  \begin{tabular}{c c c | cccc | cccc | cccc }   \toprule
  &      \multicolumn{2}{r}{Grid size:}      & \multicolumn{4}{c |}{$1024\times128$} & \multicolumn{4}{c |}{$2048\times256$} & \multicolumn{4}{c }{$4096\times512$}\\
Test &      $\theta$      & $d$                                & SC   &   OC &   CC  & $\rho$         & SC   &   OC &   CC  & $\rho$         &  SC  &  OC  &  CC  &  $\rho$  \\
     \midrule
 \multirow{3}{*}{1} &\multirow{1}{*}{--} & $\textnormal{SA}_1$ & 105  &  1.6 & 9.8   & 0.23       &  106 & 1.7  &  9.8  & 0.26       &  105 & 1.7  & 9.8  & 0.26 \\
                    &\multirow{2}{*}{0.1}& $\textnormal{RN}_1$ & 84   &  1.2 & 6.9   & 0.50       &  84  & 1.2  &  6.9  & 0.49       &  85  & 1.2  & 6.9  & 0.51 \\
                    &                    & $\textnormal{RN}_4$ & 226  &  1.4 & 8.2   & 0.19       & 226  & 1.4  &  8.2  & 0.21       &  225 & 1.4  & 8.2  & 0.19 \\
     \midrule
 \multirow{3}{*}{2} &\multirow{1}{*}{--} & $\textnormal{SA}_1$ & 104  & 1.6  & 9.6   & 0.83       & 106  & 1.6  & 9.8   & 0.84       &  104 &  1.7 &  9.8 & 0.84 \\
                    &\multirow{2}{*}{0.1}& $\textnormal{RN}_1$ & 77   & 1.2  & 6.7   & 0.92       & 77   & 1.2  & 6.7   & 0.94       &  78  &  1.2 &  6.7 & 0.93 \\
                    &                    & $\textnormal{RN}_4$ & 221  & 1.4  & 8.2   & 0.80       & 223  & 1.4  & 8.2   & 0.80       &  223 &  1.4 &  8.3 & 0.81 \\
     \midrule
 \multirow{3}{*}{3} &\multirow{1}{*}{--} & $\textnormal{SA}_1$ & 104  & 1.6  &  9.7  & 0.93       & 105  & 1.6  & 9.8   & 0.94       &  105 &  1.7 &  9.8 & 0.93 \\
                    &\multirow{2}{*}{0.1}& $\textnormal{RN}_1$ & 82   & 1.2  &  6.7  & 0.97       &  82  & 1.2  & 6.7   & 0.97       &   81 &  1.2 &  6.7 & 0.97 \\
                    &                    & $\textnormal{RN}_4$ & 228  & 1.4  &  8.2  & 0.92       & 229  & 1.4  & 8.3   & 0.92       &  229 &  1.4 &  8.3 & 0.92 \\
  \bottomrule
  \end{tabular}

  \caption{Application of \saamg\ and \rnamg\ to three linearized elasticity
   problems.  The tests are progressively more difficult, with Test 1 being a
   steel bar with Poisson ratio $\nu=0.30$, Test 2 being a rubber bar with
   $\nu = 0.499$, and Test 3 being a rubber bar with $\nu=0.4999$.}\label{tab:elas}
\end{table}
}

\begin{remark}

   It is important to note that the ideal $P$ in equation (\ref{eq:ideal}) is
   only ideal for the specific two-grid setting outlined in~\cite{FaVa2004}.
   This ideal $P$ is not guaranteed to be ideal for multilevel methods, or even
   for all two-grid methods.  However, the ideal $P$ is still a useful target
   for interpolation as evidenced by the experiments in this work and by the
   fact this it is ideal under the assumptions in~\cite{FaVa2004}.
   Along with approximating $P_{\textnormal{ideal}}$, \rnamg
   combines other proven approaches to constructing quality
   interpolation operators, interpolation smoothing and guaranteed
   interpolation of known algebraically smooth modes.

   As an example of how
   this combination of strategies makes \rnamg\ more robust than using
  $P_{\textnormal{ideal}}$, two-grid experiments for the three test cases from
  Table~\ref{tab:elas} are run on a small mesh yielding 4.5K DOFs.  For the steel
   beam (Test 1), the convergence factors are 0.19, 0.17 and 0.18, for
   $\textnormal{SA}_1$, $\textnormal{RN}_4$ and ideal $P$, respectively.
   Moving to the rubber beam (Test 2), the convergence factors are 0.80, 0.79
   and 0.97.  Finally for the harder rubber beam (Test 3), the convergence
   factors are 0.92, 0.91, 0.98.  Experimentally, it appears that as the
   Poisson ratio approaches 0.5, convergence $P_{\textnormal{ideal}}$ degrades significantly faster
   than the interpolation operators produced by \saamg\ and \rnamg.

\end{remark}

\section{Conclusions}\label{sec:conclusions}

The presented RN AMG methodology has proven successful on a wide variety of
problems. The proposed filtering of the interpolation sparsity patterns before and after
energy-minimization smoothing iterations greatly reduces setup and cycle complexity,
and at times improves convergence, making RN AMG a robust solver for difficult SPD
problems, non-symmetric problems, and systems-related problems. Total complexity
remains reasonable for all of the examples tested, and tends to scale with the size of the problem.
Providing such total complexity estimates, including the setup phase, is a specific contribution
of this work.

One particularly difficult problem is strongly anisotropic diffusion, which
RN AMG is able to solve effectively in comparison to other AMG methods. Because the focus
here is on a general root-node methodology, coupled with energy-minimization smoothing
of $P$, minimal testing is done in coupling RN AMG with other advancements in AMG\@.
In the case of strongly anisotropic diffusion, many other works consider modified SOC and
advanced
coarsening schemes in order to better capture the anisotropy~\cite{OlScTu2009,DAmbra:2013iwa,
BrBrKaLi2015,Notay:2010um,Brannick:2012bl,BrBrMaMaMcRu2006,BrBrKaLi2015}, thereby
improving convergence rates. Future work involves coupling RN AMG with a
larger variety of SOC measures, aggregation routines, and sparsity patterns for $P$,
to further improve convergence on anisotropic problems. Root-node is also shown effective at
solving convection-diffusion, and a discontinuous, upwind discretization of a hyperbolic PDE
(steady-state transport), demonstrating that AMG need not be limited to elliptic problems.

Parallelization of RN AMG is straightforward. The basic computational kernels
are typically available in AMG codes (e.g., matrix-matrix multiply),
are light-weight (e.g., filtering entries), or purely local in computation (e.g., the
row-wise projection operation (\ref{eqn:mode_constraint})).

New theoretical motivation for coupling energy minimization with RN AMG is provided in
the symmetric and non-symmetric setting. Proving Conjecture~\ref{conj_stab} would complete
the two-grid convergence proof for ideal operators in the non-symmetric case. However,
the stability constraint is of limited use in practice, as it is not directly approximated like the
weak and strong approximation properties. Further work on developing energy-based,
non-symmetric convergence theory is important and ongoing work in understanding how
to construct AMG solvers for non-symmetric systems.

\bibliographystyle{siamplain}
\bibliography{amg.bib}

\end{document}